\newcommand{\kibitz}[2]{\ifnum\Comments=0\textcolor{#1}{#2}\fi}
\DeclareMathAlphabet{\mathbx}{U}{BOONDOX-ds}{m}{n}
\newtheorem{theorem}{Theorem}
\newtheorem{prop}{Proposition}
\newtheorem{definition}{Definition}
\newtheorem{remark}{Remark}
\newtheorem{example}{Example}
\newtheorem{lemma}{Lemma}
\newcommand\reallywidehat[1]{\arraycolsep=0pt\relax%
\begin{array}{c}
\stretchto{
  \scaleto{
    \scalerel*[\widthof{\ensuremath{#1}}]{\kern-.5pt\bigwedge\kern-.5pt}
    {\rule[-\textheight/2]{1ex}{\textheight}} 
  }{\textheight} %
}{0.5ex}\\           
#1\\                 
\rule{-1ex}{0ex}
\end{array}
}
\newcommand{\R}{\mathbb R}
\newcommand{\N}{{\mathbb N}}
\newcommand{\PP}{\mathcal{P}}
\newcommand{\MM}{\mathcal{M}}
\newcommand{\eps}{\varepsilon}
\newcommand{\setdef}[2]{\left\{#1 \, : \; #2\right\}}
\newcommand{\real}{\mathbb{R}}
\newcommand{\be}{\begin{equation}}
\newcommand{\ee}{\end{equation}}
\newcommand{\ba}{\begin{array}}
\newcommand{\ea}{\end{array}}
\begin{document}

\title{Generalized solutions to opinion dynamics models with discontinuities}

\author{Francesca Ceragioli, Paolo Frasca, Benedetto Piccoli and Francesco Rossi
}                     
%
%
\maketitle
%


\begin{abstract}
Social dynamics models may present discontinuities in the right-hand side of the dynamics for multiple reasons, including topology changes and quantization.
Several concepts of generalized solutions for discontinuous equations are available in the literature and are useful to analyze these models.
In this chapter, we study Caratheodory and Krasovsky generalized solutions for discontinuous models of opinion dynamics with state dependent interactions. We consider two definitions of ``bounded confidence'' interactions, which we respectively call metric and topological: in the former, individuals interact if their opinions are closer than a threshold; in the latter, individuals interact with a fixed number of nearest neighbors. We compare the dynamics produced by the two kinds of interactions in terms of existence, uniqueness and asymptotic behavior of different types of solutions.
\end{abstract}


\section{Introduction and summary of results}
In the last decades, researchers from many different fields explored the behavior of large systems of active particles or agents.
The latter entities, also called self-propelled, intelligent or greedy, are endowed with the capability of decision making and, usually,
of altering the energy or other (otherwise conserved) quantities of the system. 
Examples include dynamics of opinions in social networks, animal groups,
networked robots, pedestrian dynamics and language evolution.
Their dynamics is written as an Ordinary Differential Equation (ODE in the following) in large dimension. In order to cope with this large dimension, various mean-field, kinetic
and hydrodynamic limit descriptions were studied in the literature, see
\cite{MBG16,ABFHKPPS19,Bertozzi,CCH,CFRT,CPT,DDM14,PR18} and references therein.
The interaction among agents may be restricted to specific regions due to the physical aspects of the modeled phenomenon, giving rise to discontinuities.
Even more, due to modeling choices these discontinuities may
appear naturally at multiple scales, see for instance
\cite{ABGR20,CPT14}. 

One of the main phenomena of active particles is \emph{self-organization}
of the whole system, stemming from simple interaction rules at the particle
level.  Such interaction rules are often motivated by relationships
among agents; thus, corresponding evolutions are referred to as \emph{social dynamics}
\cite{Piccoli:2017:chapter,PROSKURNIKOV201765,PROSKURNIKOV2018166}.
The most common self-organized configurations are: consensus  \cite{OFM07}, i.e.\ all agents reaching a common state; alignment, i.e.\ agents reaching consensus on a subset of the state variables (e.g.\ speed) \cite{CFPT15}; and clustering, i.e.\ agents grouping in a small number
of well-separated states \cite{JABIN20144165,MT14}.

The description of social dynamics may require continuous or discrete quantities, and the corresponding ODE may have either continuous or discontinuous vector fields. As a matter of fact, there are multiple situations where discrete variables and discontinuities arise. A partial list includes:
\begin{itemize}
\item[--] the presence of threshold effects caused by physical, communication, or psychological barriers (\cite{HK});
\item[--] the presence of quantities taking values in discrete sets, when a finite number of choices is given (e.g.\ whether and which product to buy) or when communication takes place by means of a finite set of symbols
(\cite{ceragioli2018consensus,Ch-Mo_Sr-CDC16,Martins2007CODA});
\item[--] the presence of a pattern of allowed/forbidden interactions, such as can be encoded in a graph
(\cite{Piccoli:2017:chapter,Ballerini:2008:evidence}).
\end{itemize}
The latter case includes all situations where physical or cognitive constraints limit interactions to agents that are close to each other, either spatially or behaviorally. 
When  models are defined in discrete time, discontinuities of the right-hand side pose little mathematical difficulties and are easily managed or simply ignored. Instead, {\em discontinuities give rise to technical difficulties in continuous time}, as the study of ODEs is deeply based on the notions of continuity and differentiability.
We notice that the problem of dealing with discontinuities is not limited to the case of ODE models, but it also occurs at other scales for learning dynamics in crowds combined
with loss of symmetry features, see
\cite{ABGR20}.

Even though discontinuities of some models can be avoided by defining suitable smoothed counterparts, which feature continuous approximations of discontinuous functions, the connections between continuous and discontinuous variants are not trivial~\cite{frasca}.
Most importantly, discontinuities cannot always be avoided. This is the case when the agents are allowed a finite number of choices/actions/interactions.
Another, classical, example of unavoidable discontinuity in ODEs comes from control theory. Whereas stabilizability of a system with inputs by means of a continuous feedback law implies asymptotic controllability by means of an open-loop control, the converse does not hold if only continuous feedback laws are considered.  But if  discontinuous feedback laws are allowed, then asymptotic controllability does imply stabilizability (\cite{AnconaFabio1999PVFa,CLSS}).  
This example shows not only that discontinuities cannot be avoided, but  they may be helpful. 
This result and, more generally, all results on discontinuous ODEs depend on the notion of generalized solutions adopted:  it is interesting to  analyse and compare different notions in order to deduce common features and differences.  This is one of the objectives of the present paper:  we will make use of the main concepts of solutions that have been defined in mathematical analysis and in control theory, and in particular we will discuss {\em classical, Caratheodory, Filippov and Krasovsky solutions}. We recall the precise definitions of these solutions in Section~\ref{s-sols} below.

We now describe more precisely the two opinion dynamics models that we analyze in the present chapter. 
The fact that an individual influences those he communicates with, can be taken as a principle when describing evolution of opinions. The most  basic model which describes in a mathematical framework this principle is usually referred to as DeGroot's model (despite having earlier origin in French~\cite{french1956formal}). Its main feature is that in a group of  individuals that communicate among them, consensus  is achieved.  On the other hand, everyone's experience suggests that consensus is not always achieved among individuals. For this reason,  many researchers have proposed more complex models, aiming to describe agreement and disagreement at the same time: see \cite{PROSKURNIKOV201765,PROSKURNIKOV2018166} for a comprehensive discussion. Crucially, several of these more complex models feature discontinuities of the right-hand side.

Here we consider a general model, which incorporates the well known Hegselmann-Krause model \cite{HK}. The basic idea of Hegselmann and Krause is that trust towards others has some limitations. In their work, they assume that an individual is influenced by others only if  opinions are not too far from one another.
Here, we describe the fact that one's confidence towards others  is limited, by describing in two different ways the set of neighbors of an individual. 
In the first setting, interactions among individuals follow  Hegselmann and Krause's rule:  one's neighbors are individuals whose  opinions do not differ too much. We call this  kind of interactions {\em metric interactions}.
In the second setting, we assume that an individual follows only a fixed number of neighbors, the ones whose opinions are the nearest to his own. 
We call this kind of interactions
{\em topological interactions}. 
Topological interactions can be motivated by the notion of Dunbar number \cite{Ballerini:2008:evidence,DUNBAR1992469} that indicates a cognitive limit in the number of significant relationships among individuals. This concept is particularly meaningful in the contemporary world,  where potential contacts and available information seem to be unlimited.

\subsection{Mathematical models and main results}
In the mathematical description of the bounded confidence models, we start by considering a set   $V=\{1,\ldots,N\}$ of $N$ agents (also called individuals)  with states $x_i\in \R^n$ (e.g.\ position, opinion, speed). Each agent $i\in V$ interacts with other agents belonging to a subset of neighbors $N_i(x)\subseteq V$. The subset of neighbors  $N_i(x)$ depends on the state and induces a graph $G(x)$ of interactions among the agents: $V$ is the set of nodes and $(i,j)$ 
is an edge if $j\in N_i(x)$. We denote the set of edges by $E(x)$. The dynamics can be written in the following form:

\begin{equation}\label{eq:general}
\dot{x}_i=\sum_{j\in N_i(x)} a(\| x_j-x_i \|) (x_j-x_i).
\end{equation}
The function $a:[0,+\infty[\to [0,+\infty[$ satisfies the following hypotheses:
\begin{itemize}
    \item $a$ is Lipschitz continuous;
    \item $a(r)>0$ for $r>0$; 
    \item $a$ is not decreasing.
\end{itemize}
The function $a$ represents the strength of interactions
among agents. A more general model could be written with interaction functions $a_{ij}$ that depend on the pair of neighbors. Most results stated in this article remain valid in this more general setting, provided that interactions are symmetric ($a_{ij}=a_{ji}$). 
Depending on how neighbors $N_i(x)$ are chosen, one obtains different bounded confidence models. From now on, we will use the notations $N^m_i,N^t_i$ for the set of neighbors for the metric and topological versions, that we make explicit below.

In the \emph{metric bounded confidence model} 
  agent $i$'s neighbors are those whose state is not too far from  his own, namely
$$ N_i^{m}(x)=\{j\in V: \|x_j-x_i\|<1\}.
$$
This choice implies that interactions
between agent $i$ and $j$ are symmetric, i.e.\ 
agent $i$ is influenced by agent $j$ if and only if agent $j$ is influenced by agent $i$. We then write the metric bounded confidence dynamics as follows:  
\begin{equation}\label{eq:mbc}
\dot{x}_i=\sum_{j\in N_i^m(x)} a(\| x_j-x_i \|) (x_j-x_i).
\end{equation}
As already mentioned, the first and best known version of the metric bounded confidence model is Hegselmann-Krause's \cite{blondelHK,HK,YY-DVD-XH:14}, which 
corresponds to  $a\equiv 1$ and was originally written in discrete time with states $x_i\in \R$. Its continuous-time counterpart was first studied in~\cite{blondel2}.

The \emph{topological bounded confidence model} 
is obtained when agent $i$  interacts only  with a fixed number $\kappa$ of neighbors, where $1\leq \kappa\leq N-1$. More precisely, for every agent $i\in V$, her neighborhood $N_i^t(x) $ is defined in the following way. The elements of $V\setminus\{i\}$ are ordered by increasing values of $\| x_j-x_i\|$; then, the first $\kappa$ elements of the list (i.e.\ those with smallest distance from $i$) form the set $N_i^t(x)$ of current neighbors of $i$.
Should a tie between two or more agents arise, priority is given to agents with lower index. We then write \begin{equation}\label{eq:tbc}
\dot{x}_i=\sum_{j\in N_i^t(x)} a(\| x_j-x_i \|) (x_j-x_i).
\end{equation}
For topological interactions, agent $i$ could be influenced  by agent $j$ without agent $j$ being influenced by agent $i$, namely interactions are not symmetric. This fact is a major difference between the metric and topological bounded confidence models.
This model was first pointed out in \cite{Piccoli:2017:chapter}, while several other models of opinion dynamics and collective motion have considered topological interactions in different forms: see~\cite{cristiani2011effects,rossi2020opinion} and references therein.


For both models, we have the following crucial observation: the right hand side of \eqref{eq:mbc}-\eqref{eq:tbc} is a discontinuous function. For this reason, one needs to carefully select a concept of solution to such discontinuous ODE.
In our opinion, this aspect has been overlooked in the extensive literature about bounded-confidence models, with some  exceptions such as \cite{blondel1,blondel2,frasca}.
Here, we will consider mainly Caratheodory and Krasovsky solutions. Definitions and a brief discussion on different notions of solutions can be found in Section~\ref{s-sols}. 
The first  result about solutions of \eqref{eq:general} will be the following.


\begin{theorem}[Existence and uniqueness] \label{t-exun}
Consider the bounded confidence models, either in the metric case \eqref{eq:mbc} or topological case \eqref{eq:tbc}.
Then, there exists a solution (global in time) for every initial condition
in the Krasovsky sense.
Uniqueness of solutions does not hold in general, but  holds for almost every initial datum.
Moreover, the same result holds for Caratheodory solution, both in the metric case and in the topological case for $\kappa=1$.
\end{theorem}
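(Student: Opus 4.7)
My plan is to handle the three assertions (global Krasovsky existence, uniqueness for almost every initial datum, and existence of Caratheodory solutions in the relevant cases) in sequence, exploiting the piecewise-regular structure of the right-hand side of \eqref{eq:mbc} and \eqref{eq:tbc}.

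For Krasovsky existence I would first observe that the right-hand side $f$ of either model is bounded on bounded subsets of $(\R^n)^N$: the sum ranges over at most $N-1$ terms, each of the form $a(\|x_j-x_i\|)(x_j-x_i)$, which is continuous on the closed region where it is nonzero. Hence the Krasovsky regularization $K[f]$ has nonempty compact convex values and is upper semicontinuous. A standard theorem on differential inclusions then produces local absolutely continuous solutions for every initial datum. To globalize them, the clean step is to check that the convex hull of $\{x_1(0),\dots,x_N(0)\}$ is forward invariant: at any boundary point, the component of any Krasovsky selection acting on an extremal agent $i$ is a convex combination of vectors that are themselves non-negative combinations of $x_j-x_i$, hence pointing into the hull. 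This confines trajectories to a compact set and rules out blow-up.

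For uniqueness for almost every initial datum I would isolate the discontinuity locus $D\subset(\R^n)^N$. In the metric case $D=\bigcup_{i<j}\{\|x_j-x_i\|=1\}$; in the topological case one must additionally include the tie-breaking surfaces $\{\|x_j-x_i\|=\|x_k-x_i\|\}$. Either way $D$ is a finite union of smooth algebraic hypersurfaces, hence has Lebesgue measure zero, and on each connected component of the complement $f$ is locally Lipschitz so solutions are unique as long as they remain in that component. The remaining step, and the main technical one, is to show that the set of initial conditions whose solutions spend a set of times of positive measure on $D$ is itself of measure zero: I would argue this by stratifying $D$ by codimension, observing that on each smooth stratum the one-sided Lipschitz fields generically have nonzero normal component, and concluding via a Fubini-type estimate on the flow in the complement of $D$ that the bad set of initial data is null.

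For Caratheodory solutions I would refine the Krasovsky construction above. In the metric model, whenever a trajectory slides on $\{\|x_j-x_i\|=1\}$, the strict inequality in the definition of $N_i^m$ means $j\notin N_i^m(x)$ and $i\notin N_j^m(x)$, so one checks that the sliding velocity matches precisely $f(x)$ evaluated with this pair omitted, thus yielding a Caratheodory solution; symmetry of the interaction plays a crucial role here. For the topological model with $\kappa=1$, the deterministic lexicographic tie-breaking rule always selects exactly one neighbor at every configuration, so $f$ itself is single-valued and the same matching argument produces a Caratheodory selection. For $\kappa\ge 2$ this step breaks down because an agent may be pulled by different subsets of neighbors across a discontinuity surface with incompatible sliding conditions, which is why the theorem restricts to $\kappa=1$.

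The hardest part I expect is the almost-everywhere uniqueness statement: because many distance-one or tie surfaces can intersect in high-codimension loci where several elementary events happen simultaneously, one must carry out a careful stratification and rule out trajectories that linger on these intersections for a positive measure of times, using the combined transversality and symmetry properties of $f$ on each stratum.
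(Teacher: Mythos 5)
Your Krasovsky existence argument follows the paper's route: upper semicontinuity and compact convex values of the Krasovsky multifunction give local solutions (Proposition~\ref{prop:existence-Fil}), and contractivity of the convex hull of the agents (Proposition~\ref{p-contractive}) confines trajectories to a compact set and permits global continuation. Your a.e.-uniqueness plan is also in the right spirit (stratify the discontinuity locus, show generic transversal crossing, conclude by a Fubini-type estimate, as in Proposition~\ref{th:aeuniq-t}), but the event you propose to control --- trajectories spending a set of times of positive measure on $D$ --- is not the right one: Example~\ref{ex:m-soluzioni} exhibits two distinct Caratheodory solutions from the same point, each of which touches the discontinuity surface only at a single instant. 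What must actually be shown is that branching can occur only upon arrival at a codimension-\emph{two} subset $\widehat{\MM}$ of $D$ (cut out by the additional vanishing of a one-sided normal derivative of $\theta_{ijk}$), together with the sign analysis showing that nonvanishing normal components force unique transversal crossing; only then does the Hausdorff-measure/Fubini argument (a Lipschitz trajectory is $H^{1+\eps}$-null, integrated over a codimension-two set) give a null set of bad initial data. Your sketch asserts genericity but never verifies that the exceptional set loses a full extra dimension, which is the substantive computation.

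The genuine gap is in the Caratheodory existence argument for the topological model with $\kappa=1$. Single-valuedness of $f^t$ via the lexicographic tie-break does not produce a Caratheodory solution: if agents $j$ and $k$ are both nearest to $i$ at time $0$ and the tie-break selects $j$, it can happen that for every $t>0$ the unique nearest neighbour is $k$, so the trajectory driven by $j$ violates $\dot x_i=f^t_i(x(t))$ on a set of positive measure near $0$. The paper's proof is precisely about making the correct choice: the neighbour $\Gamma(i)$ is selected not by index but by minimizing the second-order quantity $\psi_i$ of \eqref{e-psi}, and one must then prove (via $A_{ijk}\le 0$ and $B_{ijk}>0$, hence $\phi'_{ik}(0)<0$) that the chosen neighbour remains strictly nearest on a nontrivial time interval, together with a graph-theoretic argument that such a consistent global choice exists. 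Your metric-case argument has a parallel flaw: a sliding Krasovsky velocity is in general a \emph{strict} convex combination of the two one-sided fields (in Example~\ref{ex:m-soluzioni} the sliding coefficient is $\alpha=\tfrac12(x_2-x_1)\in(0,1)$), so it does not coincide with $f^m$ evaluated with the critical pair omitted, and sliding Krasovsky solutions are not Caratheodory solutions; the correct construction instead chooses at the initial instant which pairs at distance exactly one to connect, so that the trajectory immediately leaves the discontinuity set consistently with that choice. Without these selection arguments the Caratheodory part of the theorem is unproved.
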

The full proof of the result for \eqref{eq:mbc} was given in \cite{piccoli_rossi_2020}, which extended partial results from~\cite{blondel2,frasca,ceragioli2018discontinuities}. Here we prove the corresponding claims for \eqref{eq:tbc} in Section~\ref{sect:solutions}.

\smallskip
After solving the questions about existence and uniqueness, we focus on some properties of such solutions that have been explored in the rich literature about social dynamics models.  We want to recall some of them. In the next definitions  $x(t)= (x_1(t),...,x_N(t))$ will denote a solution of an unspecified type. 

\begin{itemize}
\item[{\bf P1)}]  {\bf Average preservation.}  $x_{ave}(t)=\frac{1}{N}\sum_i x_i (t) $ is 
invariant along trajectories.

\item[{\bf P2)}]
{\bf Contractivity of the support.} For all $T^2\geq T^1\geq 0$, it holds
$$
\overline{co}\left(\left\{x_1(T^1),x_2(T^1),\ldots,x_N(T^1)\right\}\right)\supseteq \overline{co}\left(\left\{x_1(T^2),x_2(T^2),\ldots,x_N(T^2)\right\}\right),
$$ where $\overline{co}$ is the closed convex hull of the values in the brackets (defined in \eqref{e-co} below).


\item[{\bf P3)}]
{\bf Convergence to cluster points.}
Every solution  $x(t)$ converges for $t\to +\infty$
to a cluster point, namely to a point
$x^{\infty}=(x^\infty_1,\ldots,x^\infty_N)\in\R^{nN}$, $x^\infty_i\in\R^n$, such that for every $i\in V$,  for every $j\in N_i(x^{\infty})$ it holds $x^\infty_i=x^\infty_j$. Every set of agents with coincident states is said to be a cluster.
 \end{itemize}

Properties  P1), P2), P3) will be discussed for both metric and topological models. Many examples will show the richness of possible behaviors, depending on the chosen notion of solution. 
Indeed, the following theorem summarizes the results that we prove in the next sections: the proof scheme is summarized in Table~\ref{tab:scheme}.

\begin{theorem}[Properties of solutions]\label{t-prop}
\begin{itemize}
    \item [(i)]
{\bf Metric bounded confidence model.} 
     Caratheodory and Krasovsky solutions to \eqref{eq:mbc}  satisfy properties P1)-P2)-P3).
        
 \item[(ii)]    
    {\bf Topological  bounded confidence model.}
    Caratheodory and  Krasovsky solutions to \eqref{eq:tbc}  satisfy property P2) and may not satisfy properties P1) and P3).
    
\item[(iii)]    
    {\bf Topological  bounded confidence model with one neighbor.} In addition to the previous case,  Caratheodory solutions to \eqref{eq:tbc} with $\kappa=1$  satisfy property P3). 
\end{itemize}
\end{theorem}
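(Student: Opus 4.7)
The plan is to exploit one unifying structural observation throughout: for every choice of neighbourhood rule, the vector field at $x_i$ is a non-negative combination of displacements $(x_j-x_i)$ with $j\in N_i(x)$. This immediately gives P2 in both models; it reduces P1 to the symmetry or asymmetry of $N_i$; and it leaves P3 as the delicate part, to be split into the metric case, the topological case with $\kappa=1$, and counterexamples for the topological case with $\kappa\ge 2$.

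For P2, fix any linear functional $\phi:\R^n\to\R$ and set $M(t):=\max_i \phi(x_i(t))$. If $i^\star$ attains the maximum at time $t$, then
\begin{equation*}
\frac{d}{dt}\phi(x_{i^\star})=\sum_{j\in N_{i^\star}(x(t))} a(\|x_j-x_{i^\star}\|)\bigl(\phi(x_j)-\phi(x_{i^\star})\bigr)\le 0,
\end{equation*}
since every coefficient is non-negative and every bracket non-positive. A Dini-derivative comparison argument yields that $M$ is non-increasing, and since this holds for every $\phi$, the closed convex hull is non-increasing, proving P2 for Caratheodory solutions in both models. Since any Krasovsky selection is a convex combination of classical limits, each of which still has the ``non-negative combination of displacements'' form, the same inequality survives the convexification and P2 extends to Krasovsky solutions. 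For P1 in the metric case, the symmetry $j\in N_i^m(x)\iff i\in N_j^m(x)$ together with the symmetry of the scalar factor $a(\|x_j-x_i\|)$ gives the pairwise cancellation
\begin{equation*}
\sum_i \dot{x}_i=\sum_{\{i,j\}:\,\|x_j-x_i\|<1} a(\|x_j-x_i\|)\bigl((x_j-x_i)+(x_i-x_j)\bigr)=0,
\end{equation*}
and the identity survives Krasovsky convexification.

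For part (ii), the failures of P1 and P3 are shown by explicit counterexamples. For P1 I would take $N=3$, $\kappa=1$ on the line with states $0,\,0.4,\,1$: each of the two outer agents has the middle one as its unique nearest neighbour while the middle one points at the leftmost, so the contribution of the rightmost agent is unbalanced and a direct computation gives $\sum_i\dot x_i=-0.6\,a(0.6)\neq 0$. For P3 with $\kappa\ge 2$ I would exhibit a small configuration in which the combinatorial map $x\mapsto N_i^t(x)$ keeps switching forever so that no cluster configuration in the sense of P3 is approached. For P3 in the metric case (part (i)), the classical route is to combine P1 and P2 with a Lyapunov function such as $V(x)=\tfrac12\sum_{i,j}\Phi(\|x_j-x_i\|)$ chosen so that $\Phi'(r)=a(r)\,r$; one checks that $V$ is non-increasing along Caratheodory and Krasovsky solutions, and a LaSalle-type invariance principle adapted to the discontinuous setting (as in \cite{piccoli_rossi_2020}) forces $x(t)$ to approach the set where $\dot V=0$, which coincides with the cluster condition in P3.

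The main obstacle is P3 in the topological case with $\kappa=1$ of part (iii), where the usual quadratic Lyapunov functions fail to be monotone because the dynamics is not symmetric. My plan is combinatorial: with $\kappa=1$ the interaction graph $G(x)$ is a functional digraph in which each weakly connected component contains exactly one directed $2$-cycle, formed by a mutually-nearest pair. I would first show that the distance between such a mutual pair is strictly decreasing, so the pair clusters in finite or asymptotic time and, once clustered, remains a mutual pair; then, by induction on the number of components, I would argue that every remaining agent either joins an existing cluster or participates in a new mutual pair, since pointing to one's nearest neighbour is a contractive action as long as the topology does not switch. The delicate point, which is the reason part (iii) singles out $\kappa=1$, will be bookkeeping the finitely many switches of $N_i^t(x)$ and ruling out Zeno-type accumulations; for this I would restrict to the almost-every initial data on which Theorem~\ref{t-exun} guarantees uniqueness of Caratheodory solutions, so that switching events are isolated and the induction closes.
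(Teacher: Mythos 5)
Your treatment of P2 (linear functionals plus a Dini-derivative argument, surviving convexification because Krasovsky velocities are still non-negative combinations of displacements), of P1 in the metric case (pairwise cancellation by symmetry), and of the failure of P1 in the topological case (an explicit asymmetric configuration) matches the paper's route in substance, and your P3 argument for the metric case correctly points to the gradient-flow Lyapunov function of \cite{piccoli_rossi_2020}. The genuine gap is in part (iii). Your combinatorial induction on the components of the functional digraph $G(x)$ does not close, for three concrete reasons. First, you propose to ``restrict to the almost-every initial data on which Theorem~\ref{t-exun} guarantees uniqueness,'' but the statement asserts P3 for \emph{every} Caratheodory solution of \eqref{eq:tbc} with $\kappa=1$, including the non-unique ones issued from the measure-zero set; moreover a.e.-uniqueness of the initial datum gives no control whatsoever on whether the switching times of a given solution are isolated. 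Second, the weakly connected components of $G(x(t))$ can \emph{merge} along a Caratheodory solution (the paper exhibits $N=4$, $\bar x=(-1,0,1,1)$, where $G(x(0))$ has two components and $G(x(t))$ is connected for all $t>0$), so an induction that treats components as eventually autonomous subsystems is unsound. Third, a mutually-nearest pair $\{i,j\}$ contracts only asymptotically, and while it contracts a third agent $k$ can become $i$'s nearest neighbour (the cross term $-2a(\|x_j-x_i\|)(x_k-x_i)\cdot(x_j-x_i)$ in $\tfrac{d}{dt}\|x_k-x_i\|^2$ can be negative and large), so the $2$-cycle of a component is not invariant. The paper's actual proof abandons the graph bookkeeping entirely and uses the nonsmooth Lyapunov function $W(x)=\sum_i\min_{j\neq i} I(\|x_i-x_j\|)$ with $I(r)=\int_0^r a(s)s\,ds$, shown to be non-increasing via Danskin's theorem precisely and only for Caratheodory solutions with $\kappa=1$, followed by a delicate $\omega$-limit analysis; some such global monotone quantity seems unavoidable here.

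A smaller but still real gap concerns the failure of P3 for $\kappa\ge 2$: you promise ``a configuration in which $N_i^t(x)$ keeps switching forever,'' but you do not produce one, and such perpetual-switching examples are hard to certify. The paper's counterexample is far simpler and of a different nature: a \emph{static} equilibrium that is not a cluster point ($N=7$, $\kappa=2$, $n=1$, three agents at $0$, one at $\tfrac12$, three at $1$, for which $f^t(\bar x)=0$ while the lone agent at $\tfrac12$ has neighbours at distance $\tfrac12$). Since the constant solution trivially converges to a non-cluster point, P3 fails. I would replace your switching scenario with an equilibrium of this type; likewise note that the failure of P1 is most cleanly recorded by following an explicit solution over a time interval (as in the paper's continuation of Example~\ref{ex:t-soluzioni}), not only by evaluating $\sum_i\dot x_i$ at a single instant, although in your configuration the field is continuous nearby so the extension is immediate.
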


\begin{table}
    \centering
    \begin{tabular}{|>{\centering}p{6cm}  |>{\centering}p{25mm}  |>{\centering}p{25mm}  |>{\centering}p{25mm}| }
 \hline
 & P1  & P2 & P3  \tabularnewline

 \hline
 Metric    Caratheodory      & Yes    Prop.~\ref{prop:metric-average} & Yes Prop.~\ref{p-contractive} & Yes Prop.~\ref{p-Fclust}  \tabularnewline

\hline
Metric  Krasovsky     & Yes  Prop.~\ref{prop:metric-average} & Yes Prop.~\ref{p-contractive} & Yes Prop.~\ref{p-Fclust}   \tabularnewline

\hline
Topological   Caratheodory      &  No \   Ex.~\ref{ex:t-soluzioni} & Yes  Prop.~\ref{p-contractive} &  No Ex.~\ref{ex:C-eq-no-cluster}     \tabularnewline

\hline
Topological  Krasovsky      & No \  Ex.  \ref{ex:t-soluzioni} & Yes  Prop.~\ref{p-contractive} & No Ex.~\ref{ex:C-eq-no-cluster}    \tabularnewline

\hline
Topological   Caratheodory  $\kappa=1$    &  No \   Ex.~\ref{ex:t-soluzioni} & Yes  Prop.~\ref{p-contractive} &   Yes Prop.~\ref{p-Fclust}    \tabularnewline

\hline
Topological  Krasovsky    $\kappa=1$  & No \  Ex.  \ref{ex:t-soluzioni} & Yes  Prop.~\ref{p-contractive} &  No Ex.~\ref{ex:K-eq-no-cluster-k=1}   \tabularnewline

\hline
\end{tabular}
    \caption{This table summarizes, for the reader's convenience, where in paper the main properties of the bounded confidence models are proved or disproved.}
    \label{tab:scheme}
\end{table}






Some additional facts are easy to observe.
\begin{remark} 
[Structure of cluster points, metric case] Note that for the metric bounded confidence model, different values assumed by the components of a cluster point $x^{\infty}$ are at a distance greater than or equal to one. Actually, they can be at distance precisely one, as shown by Example \ref{ex1} below. 
\end{remark}

\begin{example}[Clusters at distance 1] \label{ex1}
Consider the system \eqref{eq:mbc} with $n=2, \, N=3$ and initial condition $\overline{x}= \left ((0,0), (1,\frac{1}{3}), (1,-\frac{1}{3}) \right)$. There is a unique Krasovsky (thus also Caratheodory) solution starting at $\overline x$ which converges to the cluster point $x^\infty =\left((0,0),(1,0),(1,0)\right)$. Note that the distance between the first two agents in $x^\infty$ is precisely one.
\end{example}

\begin{remark}[Non-exclusive dependence of the asymptotic state on the initial data]
A desirable property for solutions of any system is that the asymptotic state depends on the initial datum only. Bounded confidence models fail to have this property, because  different solutions starting from the same initial condition can have different  asymptotic states. 
This is the case for Caratheodory (and a fortiori also for Krasovsky) solutions, as shown by Example \ref{ex:t-soluzioni} below.
\end{remark}

Complete proofs of the properties of the metric bounded confidence model can be found in \cite{piccoli_rossi_2020}.
Here we recall the main ideas in order to  compare  metric and topological cases.  In fact, the topological bounded confidence model turns out to be rather different and more complex to characterize. A key reason, already pointed out, is that interactions are not symmetric. As a consequence, even a characterization of equilibria is not evident. 
Here, we construct a Lyapunov function and prove convergence to cluster points in the case $\kappa=1$ only. In this case, we can also characterize the configuration of the network induced by \eqref{eq:tbc} at any time, which is a directed pseudo-forest with a cycle of length 2 in each connected component (Proposition~\ref{p-topology-case-k=1}). In the general case, counterexamples show that convergence to cluster points cannot be expected (Examples~\ref{ex:C-eq-no-cluster}-\ref{ex:K-eq-no-cluster-k=1}).

This picture shows the theoretical interest of these models and the long way to go to fully understand them. 

\section{Generalized solutions: definitions and basic facts}\label{sec:BD}
In this article, we denote by  $\lambda^m$ the Lebesgue measure on $\R^m$. 
For $x\in\R^m$, $B(x,r)$ is the ball of radius $r>0$
centered at $x$ and $B(r)=B(0,r)$ is the ball centered at the origin. The Euclidean norm in $\R ^m$ is denoted by $\|\cdot \|$.
%
Given an embedded manifold $M\subset\R^m$,
the symbol $\partial M$ denotes the topological boundary.
Given $A\subset \R^m$, we denote by $int(A)$  its interior, by $\overline A$ its closure    and we set 
\begin{equation}\label{e-co}
co(A)=\left\{\sum_{i=1}^\ell \alpha_i x_i : \ell\in\N, \alpha_i\in [0,1], \sum_{i=1}^\ell\alpha _i=1, x_i\in A \right\}
\end{equation}
the convex hull of $A$, and denote by $\overline{co}(A)$ its closure.\\

We denote by $AC([0,T],\R^m)$ the space of absolutely continuous functions on a time interval $[0,T]$. Recall that every absolutely continuous function is
differentiable for almost every time, i.e.\ except for times on a set of zero Lebesgue measure.
%
 We also introduce the following: \begin{definition}[Stratified set]\label{def:strat}
 A set $\Gamma\subset\R^m$, $\Gamma=\cup_{i=1}^{m_\Gamma} M_i$, 
 with $m_\Gamma\in\N\cup\{+\infty\}$ and $M_i$ being ${\cal C}^1$ embedded manifold of dimension $n_i\leq m$, is stratified if:
 \begin{itemize}
 \item[i)] The family $M_i$ is locally finite: given a compact $K$, it holds $K\cap M_i\not=\emptyset$ only for finite many $i$.
 \item[ii)] for $i\not=j$  it holds $M_i\cap M_j=\emptyset$,  and if $M_i\cap \partial M_j\not=\emptyset$ then $M_i\subset \partial M_j$ and $n_i<n_j$.
 \end{itemize}
  We call $\max_i n_i$ the dimension of the stratified set $\Gamma$.
 \end{definition}
 \begin{remark}
 For simplicity we used the definition of topological stratification, even
 if the examples we consider  admit Whithney 
 stratification.
 We refer the reader to \cite{marigo_piccoli_2002,PS00,Suss90} for a discussion of the different concepts
 and the role played for discontinuous ODEs and optimal feedback control.
 \end{remark}

An autonomous ODE is written as:
\begin{equation}\label{eq:ODE}
\dot{x}(t)=g(x(t))
\end{equation}
where $x\in\R^m$ and $g:\R^m\to \R^m$ is a measurable and locally bounded
function (defined at every point). The different concepts of solution
will be discussed in the next Section~\ref{s-sols}.

A multifunction on $\R^m$ is a function $H:\R^m\to \PP(\R^m)$,
with $\PP(\R^m)$ being the powerset of $\R^m$, i.e.\ the set of subsets of $\R^m$.
Given a multifunction $H$, one can consider the differential inclusion:
\begin{equation}\label{eq:diff-incl}
\dot{x}(t)\in H(x(t)).
\end{equation}
A solution is an absolutely continuous function $x(\cdot)$ which satisfies
\eqref{eq:diff-incl} for almost every $t$. 

We define the Hausdorff distance $d_H$ on the powerset of $\R^m$ 
as follows: given $x\in\R^m$ and $A,B\subset\R^m$ we set
$d(x,A)=\inf\{d(x,y):y\in A\}$ and 
$d_H(A,B)=\sup \{d(x,A),d(y,B):x\in B,y\in A\}$.
A multifunction $H$ is continuous if it is continuous for the Hausdorff
distance, while $H$ is upper semicontinuous at $x$ if for every $\epsilon>0$
there exists $\delta>0$ such that $H(y)\subset H(x)+B(\eps)$
for every $y$ with $\| x-y\|<\delta$.\\  
A continuous multifunction $H$ is also upper semicontinuous. It is well known that if $H$ is upper semicontinuous 
with compact convex values, then the corresponding differential inclusion \eqref{eq:diff-incl} admits solutions (locally in time) for
every initial condition, see \cite{AubinCellina}. More precisely, we
state the following fact.

\begin{prop} \label{p-exAC} Assume that the multifunction $H$ in \eqref{eq:diff-incl} is upper semicontinuous and, for every $x\in\R^m$, $H(x)$ is a nonempty, compact and convex subset of $\R^m$. Then, for every initial condition $x_0$ there exists a local solution to \eqref{eq:diff-incl}.
\end{prop}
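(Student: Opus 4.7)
The proof proposal is a standard existence argument for differential inclusions with upper semicontinuous convex-valued right-hand side, which I would carry out via approximation and compactness.

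First, I would exploit upper semicontinuity plus compactness of values to obtain \emph{local boundedness} near the initial datum $x_0$. Since $H(x_0)$ is compact and $H$ is USC, there exist $r>0$ and $M>0$ such that $H(y)\subset B(H(x_0), 1)\subset B(M)$ for every $y\in B(x_0,r)$. I would then fix $T>0$ small enough so that any absolutely continuous curve starting at $x_0$ with $\|\dot{x}(t)\|\leq M$ stays inside $B(x_0,r)$ on $[0,T]$ (e.g.\ $T\leq r/M$).

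Next, I would build explicit approximate solutions by an Euler-type polygonal scheme. For each $n\in\N$, partition $[0,T]$ by $t_k^n=kT/n$, set $x_n(0)=x_0$, and on each subinterval $[t_k^n,t_{k+1}^n]$ define $x_n$ by $\dot{x}_n(t)=v_k^n$ for some fixed selection $v_k^n\in H(x_n(t_k^n))$ (which exists because $H(x_n(t_k^n))$ is nonempty). By the choice of $T$, the construction stays in $B(x_0,r)$, so $\{x_n\}$ is equi-Lipschitz with constant $M$ and equibounded. Arzelà--Ascoli gives a subsequence (still denoted $x_n$) converging uniformly to some $x\in AC([0,T],\R^m)$. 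The derivatives $\dot{x}_n$ are bounded in $L^\infty$, hence, up to a further subsequence, they converge weakly in $L^1$ to $\dot{x}$.

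The main obstacle is verifying $\dot{x}(t)\in H(x(t))$ a.e., because only \emph{weak} convergence of derivatives is available, while the inclusion concerns pointwise values. The standard way around this is Mazur's lemma: suitable convex combinations of the $\dot{x}_n$ converge to $\dot{x}$ \emph{strongly} in $L^1$, hence pointwise a.e.\ along a subsequence. Fix such a Lebesgue point $t$. For any $\eps>0$, upper semicontinuity at $x(t)$ yields $\delta>0$ with $H(y)\subset H(x(t))+B(\eps)$ whenever $\|y-x(t)\|<\delta$; uniform convergence $x_n\to x$ and continuity of $x$ then force $\dot{x}_n(s)\in H(x(t))+B(\eps)$ for all $n$ large and all $s$ close to $t$. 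The set $H(x(t))+\overline{B(\eps)}$ being closed and convex, it is preserved under convex combinations and under strong $L^1$--a.e.\ limits, so $\dot{x}(t)\in H(x(t))+\overline{B(\eps)}$. Letting $\eps\to 0$ and using that $H(x(t))$ is closed gives $\dot{x}(t)\in H(x(t))$. This yields a local solution on $[0,T]$, which is what the proposition asserts.
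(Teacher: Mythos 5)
Your argument is correct and complete: local boundedness from upper semicontinuity plus compactness of $H(x_0)$, Euler polygonal approximations, Arzel\`a--Ascoli, and then Mazur's lemma combined with convexity and closedness of $H(x(t))+\overline{B(\eps)}$ to pass the inclusion to the limit. The paper does not prove this proposition at all --- it simply cites Aubin--Cellina --- and your proof is precisely the standard convergence argument given in that reference, so there is nothing to compare beyond noting that you have supplied the proof the paper delegates to the literature.
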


\subsection{Solutions to discontinuous ordinary differential equations} \label{s-sols}
Given the ODE \eqref{eq:ODE} with $g$ discontinuous, it is convenient to define the associated   Krasovsky multifunction,
 defined as:
\begin{equation}\label{eq:Kras-multi}
K(x)=\bigcap_{\delta>0}
\overline{co} \{g(y):y\in (x+B_\delta)\}.
\end{equation}
Similarly, the 
Filippov multifunction is defined as:
\begin{equation}\label{eq:Fil-multi}
F(x)=\bigcap_{\delta>0}\bigcap_{\lambda^m(N)=0}
\overline{co} \{g(y):y\in (x+B_\delta\setminus N)\}.
\end{equation}
Many definitions of solutions for \eqref{eq:ODE} are then available, most of which coincide when $g$ is sufficiently regular (e.g.\ locally Lipschitz). 
We summarize in the following definition the concepts we are considering in the rest of the paper.
\begin{definition}[Notions of solution]\label{def:ODE-sol}
Given the ODE \eqref{eq:ODE} and $T>0$ we define the following:
\begin{enumerate}

\item A {\bf classical solution} is a differentiable function $x:[0,T]\to \R^m$ that  satisfies \eqref{eq:ODE} at every time $t\in (0,T)$. At $0$ and at $T$ the equation must be satisfied
with  one-sided derivatives.

\item A {\bf Caratheodory solution} is an absolutely continuous function $x:[0,T]\to \R^m$  which satisfies \eqref{eq:ODE} at almost every time $t\in [0,T]$.

\item A {\bf Krasovsky solution} is an absolutely continuous function $x:[0,T]\to \R^m$, which satisfies: 
\[
\dot{x}\in K(x(t))
\]
for almost every time $t\in [0,T]$, with $K$ given by \eqref{eq:Kras-multi}.

\item A {\bf Filippov solution} is an absolutely continuous function $x:[0,T]\to \R^m$, which satisfies: 
\[
\dot{x}\in F(x(t))
\]
for almost every time $t\in [0,T]$, with $F$ given by \eqref{eq:Fil-multi}.
\end{enumerate}
We denote the sets of  classical, Caratheodory, Filippov and Krasovsky solutions with $\mathcal Cl$, $\mathcal Ca$, $\mathcal F$ and $\mathcal K$ respectively.

\end{definition}

The concept of classical solution is not used for discontinuous ODEs, because of general lack of existence.
In the following examples we show that both models may not admit a classical solution for some initial condition. In these examples and later in this paper, it will be convenient to denote the vector fields defined by the right-hand sides of the metric model \eqref{eq:mbc} and the topological model \eqref{eq:tbc} by $f^m$ and $f^t$ respectively.


\begin{example}[Non-existence of classical solutions, metric]
Let $N=3,\ n=1$,  $a\equiv 1$ and consider point $\overline x=(-\frac{1}{2},0,\frac{1}{2})$.
Let $f^m$ be the vector field defined by the right-hand side of \eqref{eq:mbc}. 
We have $f^m(\overline x)=(\frac{1}{2},0,-\frac{1}{2})$, in fact agents $1$ and $3$ do not communicate in this configuration. 
As soon as $t>0$ agents $1$ and $3$ start communicating as $|x_3(t)-x_1(t)|<1$ for $t>0$. Then we have that $\lim _{t\to 0^+}f^m(x(t))= (\frac{3}{2},0,-\frac{3}{2})$ which is different from $f^m(\overline x)$.
This proves that a classical solution issuing from $\overline x$ does not exist. 
If we take the initial condition $(-\frac{2}{3},0, \frac{2}{3})$, a classical solution exists until the state $\overline{x}$ is reached, but cannot be continued up to $+\infty$.


\end{example}

\begin{example}[Non-existence of classical solutions, topological]
Consider \eqref{eq:tbc}
with  $N=4$, $n=2$, $\kappa=1$, $a\equiv 1$ and 
the initial condition $\bar{x}=((-1,0), (0,0),(1,0),(1-\epsilon, \sqrt{1-\epsilon^2}))$
with $0<\epsilon<\frac{1}{2}$.
Then $N^t_1(\bar x)=\{2\}$, $N^t_2(\bar x)=\{1\}$,
$N^t_3(\bar x)=\{2\}$, $N^t_4(\bar x)=\{3\}$.
Therefore $\dot{x}_3-\dot{x}_2=0$
and $( \dot{x}_4-\dot{x}_3)\cdot ( x_4-x_3) <0$, thus for all positive times it holds
$N^t_3(x(t))=\{4\}$ and there is no classical solution.
\end{example}

Caratheodory solutions are among the ones commonly used, as they are equivalent to solutions in the integral form:
\[
x(t)=x(0)+\int_0^t g(x(s))\,ds.
\]
Existence theorems for Caratheodory solutions are far from trivial, as we will see in Section~\ref{s-sols}.

The concepts of Filippov and Krasovsky solutions are often used
to deal with general discontinuous ODEs. They have the advantage of being based on the well-developed
theory of differential inclusions \eqref{eq:diff-incl}, see \cite{AubinCellina,Filippov}.
In particular, we have the following proposition, see \cite{AubinCellina}.

\begin{prop}[Local existence]\label{prop:existence-Fil}
Consider an ODE \eqref{eq:ODE} with $g$ measurable and locally bounded.
Then the corresponding Krasovsky and Filippov multifunctions $K$ and $F$ defined by \eqref{eq:Kras-multi} and  \eqref{eq:Fil-multi} respectively, are  upper semicontinuous with nonempty,
compact and convex values. Thus, the differential inclusions $\dot{x}\in K(x)$ and $\dot{x}\in F(x)$
admit local solutions for every initial condition. 
\end{prop}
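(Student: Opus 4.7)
The plan is to verify the three structural properties (nonempty, compact, convex) for $K(x)$ and $F(x)$ at every point, establish upper semicontinuity by a direct compactness argument, and then quote Proposition~\ref{p-exAC} to obtain local solutions.

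First I would fix $x\in\R^m$ and use local boundedness of $g$ to choose $\delta_0>0$ and $M>0$ such that $\|g(y)\|\le M$ for all $y\in x+B_{\delta_0}$. Convexity of $K(x)$ and $F(x)$ is immediate because they are intersections of closed convex hulls. Compactness follows since for every $\delta<\delta_0$ the set $\overline{co}\{g(y):y\in x+B_\delta\}$ is contained in $\overline{B(0,M)}$, hence closed and bounded; the intersection of a family of closed subsets of a compact set is compact. For nonemptiness of $K(x)$ the family is monotone decreasing in $\delta$, each member contains $g(x)$ and is compact nonempty, so by the finite intersection property the intersection is nonempty. For $F(x)$ one must be careful because the additional intersection over null sets $N$ may eliminate the point $g(x)$; however, for any finite collection $N_1,\dots,N_k$ of null sets the union is still null, the ball $x+B_\delta\setminus(N_1\cup\cdots\cup N_k)$ has positive Lebesgue measure and hence is nonempty, so the corresponding closed convex hull is a nonempty compact subset of $\overline{B(0,M)}$. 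The finite intersection property then yields nonemptiness of $F(x)$.

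Next I would establish upper semicontinuity. The key observation is the inclusion: if $\|y-x\|<\delta/2$ then $y+B_{\delta/2}\subset x+B_\delta$, which gives
\[
K(y)\subset \overline{co}\{g(z):z\in y+B_{\delta/2}\}\subset \overline{co}\{g(z):z\in x+B_\delta\}.
\]
It therefore suffices to show that for every $\eps>0$ there exists $\delta>0$ with
\[
\overline{co}\{g(z):z\in x+B_\delta\}\subset K(x)+B(\eps).
\]
Assume by contradiction that this fails: for every $n$ there is a point $v_n$ in $\overline{co}\{g(z):z\in x+B_{1/n}\}$ with $d(v_n,K(x))\ge\eps$. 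Since all $v_n$ lie in $\overline{B(0,M)}$, up to subsequence $v_n\to v^\ast$. For every fixed $m$, whenever $n\ge m$ we have $v_n\in \overline{co}\{g(z):z\in x+B_{1/m}\}$, a closed set, so $v^\ast$ belongs to it; taking the intersection over $m$ yields $v^\ast\in K(x)$, contradicting $d(v_n,K(x))\ge\eps$. The same argument applies to $F$, with the extra inner intersection over null sets carried along unchanged (the null sets used to define $F(y)$ and $F(x)$ can be taken the same since their family is closed under finite unions).

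Finally, having $K$ and $F$ upper semicontinuous with nonempty compact convex values, Proposition~\ref{p-exAC} applied to the differential inclusions $\dot x\in K(x)$ and $\dot x\in F(x)$ provides local solutions from every initial condition. The main obstacle I would expect is the Filippov nonemptiness step, where the exclusion of arbitrary null sets has to be handled via the finite intersection property together with the fact that finite unions of null sets are null; the upper semicontinuity argument for $F$ also requires keeping track of the null-set intersection, but it reduces to the $K$ argument once one observes that the families of null sets used at $x$ and at nearby $y$ can be matched.
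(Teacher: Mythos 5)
Your proof is correct, but note that the paper does not actually prove this proposition: it is stated as a known fact with a citation to Aubin--Cellina, so there is no in-paper argument to compare against. What you have written is essentially the standard textbook proof, and it holds up under scrutiny. The local bound $\|g\|\le M$ on $x+B_{\delta_0}$ correctly traps all the convex hulls in $\overline{B(0,M)}$, giving compactness and convexity; nonemptiness of $K(x)$ via $g(x)$ (which is legitimate since the paper requires $g$ to be defined at every point) and of $F(x)$ via the finite intersection property (using that a ball minus a finite union of null sets still has positive measure, hence is nonempty) are both handled properly; and the compactness/contradiction argument for upper semicontinuity is sound. The only place where your write-up is looser than it could be is the transfer of upper semicontinuity to $F$: the clean way to phrase it is that for $\|y-x\|<\delta/2$ one has $F(y)\subset\bigcap_{\lambda^m(N)=0}\overline{co}\{g(z):z\in(x+B_\delta)\setminus N\}=:F_\delta(x)$, and then the nested compact sets $F_\delta(x)$ decrease to $F(x)$ as $\delta\downarrow 0$, so the same subsequence argument applies; your remark about ``matching'' the null sets is gesturing at exactly this and is not a gap. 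The final appeal to Proposition~\ref{p-exAC} for local existence is exactly what the paper intends.
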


Among  solutions a special role is played by {\em equilibrium solutions}, whose notion should of course be adapted to the chosen concept of solution. More precisely, we give the following definition.
\begin{definition}
We call $\overline x\in \R^m$ an equilibrium with respect to classical (respectively Caratheodory,  Krasovsky, Filippov) solutions, if the function $\phi(t)=\overline x$ is a classical  (respectively Caratheodory,  Krasovsky, Filippov) solution.  
\end{definition}
We remark that $\overline x$ is an equilibrium with respect to classical and Caratheodory solutions if and only if $f(\overline x)=0$. This fact implies that Caratheodory and classical equilibria coincide. 
Instead, $\overline x$ is an equilibrium with respect to Krasovsky (respectively Filippov) solutions if and only if 
$0\in K(\overline x)$ (respectively $0\in F(\overline x)$).

\subsection{Inclusions between sets of solutions}
\label{s-inclusions}

In this section, we study the inclusions between the different concepts of solutions introduced above.
We first recall the standard inclusions between solutions, that do not depend on the specific structure of  \eqref{eq:general}. The proof is omitted, as it directly follows from definitions.

\begin{prop}[Solution sets]
The following inclusions among sets of solutions hold true:
$\mathcal {C} l \subseteq  \mathcal {C}a \subseteq  \mathcal K$
and $\mathcal {C} l\subseteq \mathcal F\subseteq \mathcal K$.
\end{prop}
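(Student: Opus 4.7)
The plan is to verify each of the four inclusions separately, essentially by unwinding Definition~\ref{def:ODE-sol} together with the formulas \eqref{eq:Kras-multi} and \eqref{eq:Fil-multi}. Two of them ($\mathcal{Cl}\subseteq\mathcal{Ca}$ and $\mathcal{F}\subseteq\mathcal{K}$) are purely structural, while the other two ($\mathcal{Ca}\subseteq\mathcal{K}$ and $\mathcal{Cl}\subseteq\mathcal{F}$) hinge on the pointwise observation that the value $g(x)$ is captured by the appropriate multifunction.

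For $\mathcal{Cl}\subseteq\mathcal{Ca}$, I would note that a classical solution $x:[0,T]\to\R^m$ is differentiable on a compact interval with $\dot{x}(t)=g(x(t))$ at every $t$. Since $x([0,T])$ is compact and $g$ is locally bounded, $\dot{x}$ is bounded on $[0,T]$; hence $x$ is Lipschitz, therefore absolutely continuous, and the ODE holding pointwise in particular holds for almost every $t$. For $\mathcal{F}\subseteq\mathcal{K}$, I would choose the empty null set $N=\emptyset$ in the inner intersection defining $F(x)$: this gives $F(x)\subseteq \overline{co}\{g(y):y\in x+B_\delta\}$ for every $\delta>0$, and intersecting over $\delta$ yields $F(x)\subseteq K(x)$. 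Any Filippov solution therefore satisfies $\dot{x}(t)\in F(x(t))\subseteq K(x(t))$ almost everywhere.

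For $\mathcal{Ca}\subseteq\mathcal{K}$, the preliminary fact is the pointwise inclusion $g(x)\in K(x)$ valid for every $x\in\R^m$. This holds because, taking $y=x$, one has $g(x)\in\{g(y):y\in x+B_\delta\}\subseteq\overline{co}\{g(y):y\in x+B_\delta\}$ for every $\delta>0$, so $g(x)$ belongs to the intersection defining $K(x)$. Substituting into the Caratheodory identity $\dot{x}(t)=g(x(t))$, which holds for a.e.\ $t$, yields the Krasovsky differential inclusion $\dot{x}(t)\in K(x(t))$ for a.e.\ $t$.

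The inclusion $\mathcal{Cl}\subseteq\mathcal{F}$ is where I expect the main subtlety, and it is the only step that is not a direct unpacking of the definitions: the Filippov multifunction removes Lebesgue-null subsets of $\R^m$ before taking the closed convex hull, so a priori $g(x)$ may fail to lie in $F(x)$ precisely at discontinuity points. The argument I have in mind is to exploit the fact that along a $C^1$ classical trajectory the derivative $\dot{x}(t)=g(x(t))$ varies continuously in $t$, so at almost every $t$ the point $x(t)$ is a point at which $g$ agrees (in a suitable approximate sense) with its essential neighborhood values, making $F(x(t))=\{g(x(t))\}$ and the inclusion trivial at such times. Once this step is carried out, it combines with the three preceding structural observations to give the full chain of inclusions.
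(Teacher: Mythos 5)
Your first three inclusions ($\mathcal{C}l\subseteq\mathcal{C}a$, $\mathcal{C}a\subseteq\mathcal{K}$, $\mathcal{F}\subseteq\mathcal{K}$) are correct and are exactly the definitional unwindings the paper has in mind when it omits the proof: local boundedness of $g$ on the compact image of the trajectory makes $x$ Lipschitz hence absolutely continuous; $g(x)\in K(x)$ because $x\in x+B_\delta$ for every $\delta>0$; and $F(x)\subseteq K(x)$ by taking $N=\emptyset$ in the inner intersection.

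The step $\mathcal{C}l\subseteq\mathcal{F}$, which you rightly single out as the delicate one, does not go through as sketched, and in fact cannot be closed by any purely definitional argument. First, a classical solution in Definition~\ref{def:ODE-sol} is only required to be differentiable, not $C^1$; since $g$ is discontinuous, $t\mapsto g(x(t))=\dot x(t)$ need not be continuous, so the starting point of your sketch is unjustified. Second, and more seriously, the claim that $F(x(t))=\{g(x(t))\}$ for almost every $t$ is false: a classical trajectory can run entirely inside a Lebesgue-null set on which $g$ disagrees with its essential neighborhood values. Concretely, take $m=2$ and $g(y_1,y_2)=(1,0)$ if $y_2=0$ and $g(y_1,y_2)=(0,0)$ otherwise; then $x(t)=(t,0)$ is a classical (even smooth) solution, while removing the null line $\{y_2=0\}$ shows $F(x(t))=\{(0,0)\}$ for every $t$, so $\dot x(t)\notin F(x(t))$ at every time. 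Continuity of $\dot x$ along the trajectory, even when available, says nothing about the values of $g$ off the trajectory, which is what $F$ sees. Thus $\mathcal{C}l\subseteq\mathcal{F}$ is not a general fact for measurable locally bounded $g$; for the specific systems \eqref{eq:mbc} and \eqref{eq:tbc} it does hold, but the honest route is through Proposition~\ref{prop:K=F}: H\'ajek's lemma gives $\mathcal{F}=\mathcal{K}$ for these right-hand sides, whence $\mathcal{C}l\subseteq\mathcal{C}a\subseteq\mathcal{K}=\mathcal{F}$. You should either restrict the fourth inclusion to the models at hand and invoke that equality, or exclude it from the part of the proposition that "follows directly from the definitions."
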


We now prove that in the specific case of dynamics \eqref{eq:mbc} and \eqref{eq:tbc} the sets of Filippov and Krasovsky solutions actually coincide.  
In view of this result, in the rest of this paper we will no longer distinguish between Krasovsky and Filippov solutions and we will simply refer to them as to Krasovsky solutions.
The proof is based on the following fact.

\begin{lemma}[Lemma 2.8 in \cite{Hajek}]\label{Lemma:Hajek}
Let $f: \R ^m\to \R^m$ be such that: 
\begin{itemize}
\item[(i)] there exist $M_\alpha\subseteq \R^m, \alpha \in \mathcal A$, such that $\cup_\alpha {M_\alpha}=\R^m$, 
 $M_{\alpha}\cap M_{\beta }= \emptyset$ for all $\alpha, \beta\in \mathcal A, \alpha\not =\beta $, and $M_{\alpha}\subseteq \overline{\text{int} (M_{\alpha})}$ for all $\alpha\in \mathcal A$,

\item[(ii)] there exist $f_{\alpha}:\R^m\to \R ^m$ continuous such that $f(x)=f_{\alpha}(x)$ for all $x\in M_{\alpha}$ and for all $\alpha\in \mathcal A$.
\end{itemize}
Then $\mathcal K=\mathcal F$ for \eqref{eq:ODE}. 

\end{lemma}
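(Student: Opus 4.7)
The plan is to prove the equality of Krasovsky and Filippov multifunctions pointwise, which immediately gives $\mathcal{K}=\mathcal{F}$. One direction, $F(x)\subseteq K(x)$, follows directly from comparing \eqref{eq:Kras-multi} with \eqref{eq:Fil-multi}: the Filippov definition intersects over more sets (the additional null sets $N$), hence cannot be larger. So the substantial content is the reverse inclusion $K(x)\subseteq F(x)$.

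The key reduction is the following claim: for every $x\in\R^m$, every $\delta>0$, and every $N\subset\R^m$ with $\lambda^m(N)=0$, one has
\[
\{g(y):y\in x+B_\delta\}\;\subseteq\;\overline{\{g(z):z\in (x+B_\delta)\setminus N\}}.
\]
Taking closed convex hulls on both sides and then intersecting over $\delta$ and $N$ yields $K(x)\subseteq F(x)$, which is what we need. To prove the claim, pick $y\in (x+B_\delta)\cap M_\alpha$ for the unique $\alpha$ with $y\in M_\alpha$. By hypothesis (i), $y\in\overline{\text{int}(M_\alpha)}$, so there is a sequence $y_k\in\text{int}(M_\alpha)$ with $y_k\to y$, and for $k$ large $y_k\in x+B_\delta$. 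Now $\text{int}(M_\alpha)$ is open; since $N$ is Lebesgue-null, $\text{int}(M_\alpha)\setminus N$ is dense in $\text{int}(M_\alpha)$. Hence we can perturb each $y_k$ slightly to get $z_k\in\text{int}(M_\alpha)\cap (x+B_\delta)\setminus N$ with $z_k\to y$. By hypothesis (ii) and continuity of $f_\alpha$,
\[
g(z_k)=f_\alpha(z_k)\;\longrightarrow\;f_\alpha(y)=g(y),
\]
so $g(y)$ lies in the closure of $\{g(z):z\in(x+B_\delta)\setminus N\}$, as required. (If $\text{int}(M_\alpha)=\emptyset$, then by (i) $M_\alpha=\emptyset$ and there is nothing to prove.)

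To pass from the claim to the pointwise inclusion of multifunctions, I use the elementary fact that if $A\subseteq\overline{B}$, then $\overline{co}(A)\subseteq\overline{co}(B)$, since $\overline{co}(B)$ is a closed convex set containing $B$, hence also $\overline{B}$. Applied with $A=\{g(y):y\in x+B_\delta\}$ and $B=\{g(z):z\in(x+B_\delta)\setminus N\}$, this gives
\[
\overline{co}\{g(y):y\in x+B_\delta\}\;\subseteq\;\overline{co}\{g(z):z\in(x+B_\delta)\setminus N\}.
\]
Fixing $v\in K(x)$, the left-hand side contains $v$ for every $\delta$, so the right-hand side contains $v$ for every $\delta$ and $N$. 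Intersecting yields $v\in F(x)$, and thus $K\equiv F$. Since a Krasovsky solution is exactly an AC function satisfying $\dot x\in K(x)$ a.e., and analogously for Filippov, the sets $\mathcal{K}$ and $\mathcal{F}$ coincide.

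The main obstacle is the density argument in the claim: it is the one place where both structural hypotheses (the manifold covering with $M_\alpha\subseteq\overline{\text{int}(M_\alpha)}$, and the continuity of $f_\alpha$ on $M_\alpha$) are used in tandem. Without the inclusion in the closure of the interior, a point $y$ sitting on a lower-dimensional stratum could have a value $g(y)$ that is not a limit of values on a full-measure set, and $F(x)$ could be strictly smaller than $K(x)$; without the continuity of $f_\alpha$ on all of $M_\alpha$ (not just on $\text{int}(M_\alpha)$) the limit of $f_\alpha(z_k)$ would not be identified with $g(y)$.
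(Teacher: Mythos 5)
Your proof is correct. Note, however, that the paper does not prove this lemma at all: it is imported verbatim as Lemma~2.8 of \cite{Hajek}, so there is no in-paper argument to compare against. What you have written is a sound, self-contained proof, and it in fact establishes the stronger pointwise identity $K(x)=F(x)$ for the multifunctions \eqref{eq:Kras-multi} and \eqref{eq:Fil-multi} (from which the equality of solution sets for \eqref{eq:ODE} is immediate); this is also the route of the classical argument. The three ingredients are all used correctly: the easy inclusion $F(x)\subseteq K(x)$ from taking $N=\emptyset$; the density of $\mathrm{int}(M_\alpha)\setminus N$ in $\mathrm{int}(M_\alpha)$ because a nonempty open set has positive Lebesgue measure; and the identification $g(z_k)=f_\alpha(z_k)\to f_\alpha(y)=g(y)$, which needs exactly that $f_\alpha$ is continuous on a neighbourhood of $M_\alpha$ (here on all of $\R^m$) and that $\mathrm{int}(M_\alpha)\subseteq M_\alpha$. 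Your closing observation about where each hypothesis enters is accurate: dropping $M_\alpha\subseteq\overline{\mathrm{int}(M_\alpha)}$ lets a lower-dimensional stratum carry values invisible to $F$, and this is precisely the mechanism by which $F\subsetneq K$ can occur for general discontinuous $g$. The only point worth tightening is the passage ``for $k$ large $y_k\in x+B_\delta$'', which implicitly treats $B_\delta$ as open (or requires shrinking $\delta$ slightly); since $K$ and $F$ are defined as intersections over all $\delta>0$, replacing $B_\delta$ by $B_{\delta/2}$ on the left of your key inclusion disposes of this harmlessly.
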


\begin{prop}[Krasovsky and Filippov solutions coincide] \label{prop:K=F}
For the metric model \eqref{eq:mbc} and the topological model \eqref{eq:tbc}, it holds $\mathcal K=\mathcal F$.  \end{prop}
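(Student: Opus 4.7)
The plan is to apply Lemma~\ref{Lemma:Hajek} separately to the vector fields $f^m$ and $f^t$; once its hypotheses are verified, $\mathcal K=\mathcal F$ follows for both models at once. For each model I would exhibit a disjoint decomposition $\R^{nN}=\bigcup_\alpha M_\alpha$ together with continuous fields $f_\alpha$ that coincide with $f^m$ (respectively $f^t$) on $M_\alpha$, and then check the geometric condition $M_\alpha\subseteq\overline{\mathrm{int}(M_\alpha)}$.

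For the metric model, the natural index set is the collection of subsets $E$ of $\{\{i,j\}:1\le i<j\le N\}$, and I would set
\[
M_E=\{x\in\R^{nN}:\|x_j-x_i\|<1\Leftrightarrow\{i,j\}\in E\}.
\]
These sets are pairwise disjoint and cover $\R^{nN}$, and on $M_E$ the neighborhood $N_i^m(x)$ equals $\{j:\{i,j\}\in E\}$, so $f^m$ coincides there with the globally Lipschitz field $f_E(x)_i=\sum_{\{i,j\}\in E}a(\|x_j-x_i\|)(x_j-x_i)$. The interior of $M_E$ is cut out by the strict inequalities $\|x_j-x_i\|>1$ for the inactive pairs, and any $x\in M_E$ with some inactive distances equal to $1$ can be pushed into $\mathrm{int}(M_E)$ by a small dilation about a fixed point, which strictly increases every pairwise distance.

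For the topological model, I would index the decomposition by neighborhood assignments $\nu$ attaching to each $i\in V$ a $\kappa$-subset $\nu(i)\subseteq V\setminus\{i\}$, setting $M_\nu=\{x:N_i^t(x)=\nu(i)\text{ for all }i\}$. The deterministic index-based tie-breaking rule guarantees that $N_i^t(x)$ is defined everywhere, so the $M_\nu$ partition $\R^{nN}$; on each, $f^t$ equals the Lipschitz function $f_\nu(x)_i=\sum_{j\in\nu(i)}a(\|x_j-x_i\|)(x_j-x_i)$. The interior of $M_\nu$ consists of configurations in which, at every $i$, the $\kappa$ distances to $\nu(i)$ are \emph{strictly} smaller than the distances to all other agents.

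The only non-routine step, and the one I expect to require the most care, is checking $M_\nu\subseteq\overline{\mathrm{int}(M_\nu)}$. Given $x\in M_\nu$, there are only finitely many boundary ties, i.e.\ triples $(i,j,k)$ with $j\in\nu(i)$, $k\notin\nu(i)\cup\{i\}$, and $\|x_j-x_i\|=\|x_k-x_i\|$; by the tie-breaking rule each satisfies $j<k$. Each such tie can be resolved in the prescribed direction $\|x_j-x_i\|<\|x_k-x_i\|$ by a simple local perturbation, e.g.\ displacing $x_k$ slightly along $x_k-x_i$, and since there are finitely many ties and each imposes an open linear condition on the perturbation vector, I would combine the local perturbations into a joint one and take its scale small enough not to generate new ties. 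The resulting point lies in $\mathrm{int}(M_\nu)$ arbitrarily close to $x$, concluding the verification and giving $\mathcal K=\mathcal F$ for both models.
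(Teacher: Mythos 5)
Your overall strategy --- decompose $\R^{nN}$ into finitely many pieces on which the field agrees with a continuous function and invoke Lemma~\ref{Lemma:Hajek} --- is exactly the paper's. Your metric half is correct, and the dilation argument is a clean, explicit verification of the condition $M_E\subseteq\overline{\mathrm{int}(M_E)}$ that the paper only asserts.

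The gap is in the topological half, at precisely the step you flag as delicate: the inclusion $M_\nu\subseteq\overline{\mathrm{int}(M_\nu)}$ is in general \emph{false} for the decomposition by exact neighbourhood assignment. Take $n=1$, $\kappa=1$, $N=4$ and $\bar x=(0,0,0,0)$. The index-based tie-breaking gives $\nu=(\{2\},\{1\},\{1\},\{1\})$, and $\mathrm{int}(M_\nu)$ is contained in the set of configurations where every neighbour relation holds with strict inequality, i.e.\ where $2$ is the unique strict nearest neighbour of $1$ and $1$ is the unique strict nearest neighbour of each of $2$, $3$, $4$. No four points on the line realize this: normalizing $x_1=0$ and $x_2>0$, the requirements $|x_i|>|x_2|$ and $|x_i|<|x_i-x_2|$ force $x_3,x_4<-x_2$, and then $|x_3-x_4|=\bigl|\,|x_3|-|x_4|\,\bigr|<\max(|x_3|,|x_4|)$ contradicts the requirement that $3$ and $4$ each be strictly closer to $1$ than to each other. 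Hence $\mathrm{int}(M_\nu)=\emptyset$ while $M_\nu\ni\bar x$, so hypothesis (i) of Lemma~\ref{Lemma:Hajek} fails for your cells. The root cause is visible in your own argument: when agents coincide, the ``open linear condition'' attached to a tie $(i,j,k)$ degenerates to $0<0$, and no perturbation can resolve it in the prescribed direction; analogous obstructions occur for every $n$ once $N$ is large enough, since the strict conditions force pairwise angles greater than $\pi/3$ at $x_1$ and only finitely many such directions fit in $\R^n$. (Even away from coincident agents, combining the single-tie perturbations into one requires showing the system of strict linear inequalities is jointly feasible, which you do not argue.) To repair the proof one must coarsen the decomposition, absorbing the strata whose degeneracy is caused only by coinciding agents --- where $f^t$ in fact extends continuously --- into adjacent full-dimensional cells, and keep the fine cell structure only at the genuine discontinuities, where tied agents occupy distinct positions. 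The paper itself is terse here, merely asserting that the cells delimited by the sets $\Delta^t_{ijh}$ satisfy the hypotheses, but your explicit choice of cells is one for which the key hypothesis demonstrably fails.
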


\begin{proof}
The system \eqref{eq:general} can be written in standard form
\eqref{eq:ODE} by setting $m=nN$,
$x=(x_1,\ldots,x_N)\in\R^{nN}$,
$f=(f_1,\ldots,f_N)$ with $f_i:\R^n\to\R^n$ given
by the right-hand side of \eqref{eq:general}.

We start considering the {\em metric bounded confidence model} \eqref{eq:mbc}.
Given $i,j\in V$, $i\not= j$, we define
the subset of $\R^{nN}$:
\begin{equation}
\Delta_{ij}^m=\{(x_1,\ldots,x_N)\in \R ^{nN}: \|x_i-x_j\|=1\},\quad
\end{equation}
and the union of such subsets as:
\begin{equation} 
\Delta^m=\cup_{i,j:i\not= j}\Delta_{ij}^m.
\end{equation}
The set $\Delta ^m$ is the set of points
at which the right-hand side of \eqref{eq:mbc} fails to be continuous.  $\R ^{nN}$ is the disjoint union of  $p=2^{\binom{N}{2}}$  sets such that $f^m$ restricted to each of them is continuous. We can enumerate these sets by 
starting with $M_1=\{x\in \R^{nm}: \|x_i-x_j\|<1 \, \forall i,j\in V, i\not =j \}$, $M_2=\{x\in \R^{nm}: \|x_i-x_j\|<1 \, \forall i,j\in V \, \text{except \, for \, } \|x_1-x_N\|\leq 1 \}$ and finishing with 
$M_{p}=\{x\in \R^{nm}: \|x_i-x_j\|\geq 1\,  \forall i,j\in V \}$.  Since $M_1,...,M_{p}$  and $f^m$ satisfy  the assumptions of Lemma \ref{Lemma:Hajek}, then $\mathcal K=\mathcal F$ for  \eqref{eq:mbc}.

An analogous argument can be repeated for the {\em topological bounded confidence model} \eqref{eq:tbc}.
In this case we denote by 
\begin{equation}
\Delta_{ijh}^t=\{(x_1,\ldots,x_N)\in \R ^{nN}: \|x_j-x_i\|=\|x_h-x_i\|\}
\end{equation}
and by
\begin{equation} 
\Delta^t=\cup_{i,j,h:i\not= j\not = h\not =i }\Delta_{ijh}^t.
\end{equation}
Remark that the right-hand side of \eqref{eq:tbc} is discontinuous on a subset of $\Delta ^t$. Also in this case, $\R ^{nN}$ is the disjoint union of  a finite number of  sets delimited by the $\Delta_{ijh}^t$'s such that $f^t$ restricted to each of them is continuous.
\end{proof}

The next examples  show that the inclusions $ \mathcal {C}l \subseteq \mathcal {C}a \subseteq  \mathcal K$ are proper for both dynamics. It also shows that Caratheodory and Krasovsky solutions starting from the same initial condition may converge to different equilibria.

\begin{example}[Proper inclusions between solution sets, metric]\label{ex:m-soluzioni}
Consider \eqref{eq:mbc} with $N=3, n=1$, $a \equiv 1$ and the initial condition $\overline x=(-\frac{1}{3}, 0,1)$. 
Note that $\overline x$ is a discontinuity point of $f^m(x)$ as $x_3-x_2=1$. 
We have $f^m(\overline x)=(\frac{1}{3},-\frac{1}{3},0)$.  In fact agent $2$ and $3$ do not communicate. There exists a unique classical solution
$x_{\cal C} (t)=(-\frac{1}{6}-\frac{1}{6}e^{-2t}, -\frac{1}{6}+\frac{1}{6}e^{-2t}, 1)$
which converges to the point $(-\frac{1}{6}, -\frac{1}{6},1)$. 

%
If we consider Caratheodory solutions, we note that there exists one more solution,  that starts following the limit value of the vector field as $x_3-x_2\to 1^-$, namely $f^{m-}(\overline x)=(\frac{1}{3}, \frac{2}{3}, -1)$: this Caratheodory solution behaves as if agents $2$ and $3$ communicate. Its 
expression is 
$x_{\mathcal{C}a} (t)= (\frac{1}{9}e^{-3t}-\frac{2}{3}e^{-t}+\frac{2}{9}, -\frac{2}{9}e^{-3t}+\frac{2}{9}, \frac{1}{9}e^{-3t}+\frac{2}{3}e^{-t}+\frac{2}{9})$ 
and it converges to $(\frac{2}{9}, \frac{2}{9}, \frac{2}{9} )$. 

We finally consider Krasovsky solutions. Besides the ones already obtained there exists a solution that slides on the discontinuity plane $\pi : x_3-x_2=1$. In fact admissible directions ${\tilde f}^{m}$
at the points of $\pi$ belong to the set
$$Kf(x)=\setdef{\alpha (x_2-x_1,
      1+x_1-x_2,
      -1)+(1-\alpha) (x_2-x_1,     x_1-x_2,
      0)}{ \alpha \in  [0,1]}.$$
Since the normal vector to $\pi$ is $v_{\bot}=(0,-1,1)$, we have that
$v_\bot \cdot \dot x=- 2 \alpha+x_2-x_1$
is equal to zero if $\alpha=\frac12(x_2-x_1).$ Namely, the Krasovsky
solution corresponding to this $\alpha$ does not exit the discontinuity plane but slides on it. In fact the sliding solution keeps $x_3$ and $x_2$ at distance 1 as $\dot x_3-\dot x_2=0$. The solution can stay on the discontinuity for arbitrarily long time: if it remains there forever, then it converges to the point $(-\frac{1}{9}, -\frac{1}{9}, \frac{8}{9})$. Other Krasovsky solutions may exit $\pi$ at arbitrary times $T$ in two different ways: either agents $2$ and $3$ influence each other and the solution converges to  $(\frac{2}{9}, \frac{2}{9},\frac{2}{9})$, or they stop interacting at all and the solution converges to $(x^*,x^*,x_3(T))$ with $x^*=\frac13-\frac{x_3(T)}2$. 
\end{example}

\begin{example}[Proper inclusions between solution sets, topological]\label{ex:t-soluzioni}
We consider \eqref{eq:tbc} with $N=3$, $n=1$, $\kappa=1$, $a\equiv 1$  and the initial condition
$\overline x=(0,-1,1)$.
We remark that the vector field defined by the right-hand side of \eqref{eq:tbc} is discontinuous at $\overline x$ as it belongs to the plane $\pi: (x_1-x_2)-(x_3-x_2)=0$.
In order to have the equation satisfied at $t=0$ classical solutions must satisfy the equations
$$
\begin{cases}
     \dot x_1 = x_2-x_1 \\
     \dot x_2 = x_1-x_2\\
     \dot x_3 = x_1-x_3.\\
\end{cases}
$$
There is a unique classical solution starting from $\overline x$ and it converges to the point $\left (-\frac{1}{2}, -\frac{1}{2},-\frac{1}{2}\right) $.
In fact $\dot x_1+\dot x_2=0$, then 
$x_1(t)+x_2(t)=\bar x_1+\bar x_2=\lim _{t\to +\infty }[x_1(t)+x_2(t)]=-1$. 

We now observe that there is a Caratheodory solution that does not satisfy the equations at $t=0$ but does for $t>0$, namely the solution of the equations 
$$
\begin{cases}
     \dot x_1 = x_3-x_1 \\
     \dot x_2 = x_1-x_2\\
     \dot x_3 = x_1-x_3.\\
\end{cases}
$$
This Caratheodory solution converges to the equilibrium point $\left (\frac{1}{2}, \frac{1}{2},\frac{1}{2}\right) $.

Finally we  have a Krasovsky solution starting at $\overline x$ that slides on the plane $\pi$.
In fact, if we denote by $f^{t-}(x)$ and $f^{t+}(x)$ its limit values as $x$ approaches 
the plane $\pi$ from the negative and positive sides respectively, we have
$f^{t-}(x)=(x_2-x_1, x_1-x_2, x_1-x_3)$ and 
$f^{t+}(x)=(x_3-x_1, x_1-x_2, x_1-x_3)$. 
We can then compute the Krasovsky set-valued map on $\pi$:
$$
Kf(x)=\{(\alpha x_2+(1-\alpha )x_3-x_1, x_1-x_2, x_1-x_3), \alpha\in[0,1]\}.
$$
By posing $\alpha=\frac{1}{2}$  we obtain the admissible direction 
$f^t_{1/2}(x)=(0, x_1-x_2,x_1-x_3)$ which is parallel to $\pi$. This implies that there is a Krasovsky solution starting from $\overline x$ and sliding on $\pi$, namely $x_1(t)=0, x_2(t)=-e^{-t}, x_3(t)=e^{-t}$, which converges to the origin.
\end{example}

\subsection{P1) Average preservation}

In this section, we discuss property P1), that is preservation of the average value of the agents. We prove that P1) is satisfied for metric interaction models, while this is not the case for topological interactions. This is one more consequence of the lack of symmetry of topological interactions.

\begin{prop}[Average preservation]\label{prop:metric-average}
Caratheodory and Krasovsky solution of \eqref{eq:mbc} have property P1). \end{prop}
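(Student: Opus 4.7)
The plan is to exploit the symmetry of metric interactions, namely that $j\in N_i^m(x)$ if and only if $i\in N_i^m(x)$, wait, if and only if $i\in N_j^m(x)$, since both conditions reduce to $\|x_i-x_j\|<1$. This symmetry forces the sum of the right-hand sides of \eqref{eq:mbc} over $i$ to vanish pointwise, and I will propagate this pointwise cancellation through both notions of solution.

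First I will handle the Caratheodory case. Let $f^m$ be the vector field on $\R^{nN}$ defined by \eqref{eq:mbc} and write $f^m=(f^m_1,\dots,f^m_N)$. For every $x\in\R^{nN}$, I would compute
\[
\sum_{i=1}^N f^m_i(x)=\sum_{i=1}^N\sum_{j\in N_i^m(x)} a(\|x_j-x_i\|)(x_j-x_i).
\]
Since the relation ``$j\in N_i^m(x)$'' is symmetric in $i,j$, pairing the terms $(i,j)$ and $(j,i)$ shows that the double sum vanishes identically. Now if $x(\cdot)$ is a Caratheodory solution, then for a.e.\ $t$ one has $\dot x_i(t)=f^m_i(x(t))$, hence $\frac{d}{dt}\sum_i x_i(t)=\sum_i f^m_i(x(t))=0$ a.e. Since $t\mapsto \sum_i x_i(t)$ is absolutely continuous, this yields that $x_{ave}(t)$ is constant, which is P1).

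For the Krasovsky case, I would lift the pointwise identity $\sum_i f^m_i(y)=0$ (valid at every $y$) to the multifunction $K$. Define the linear map $\Sigma:\R^{nN}\to\R^n$ by $\Sigma(v_1,\dots,v_N)=\sum_i v_i$. Since $\Sigma\circ f^m\equiv 0$ on $\R^{nN}$, the set $\{f^m(y):y\in x+B_\delta\}$ lies in $\ker\Sigma$, which is a closed linear (hence convex) subspace. Therefore $\overline{co}\{f^m(y):y\in x+B_\delta\}\subset\ker\Sigma$ for every $\delta>0$, and consequently $K(x)\subset\ker\Sigma$ by \eqref{eq:Kras-multi}. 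If $x(\cdot)$ is a Krasovsky solution, then $\dot x(t)\in K(x(t))\subset \ker\Sigma$ for a.e.\ $t$, so $\sum_i\dot x_i(t)=0$ a.e. Absolute continuity again gives that $x_{ave}$ is constant.

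I do not expect any serious obstacle here: the only subtlety is the passage from the pointwise vanishing of $\sum_i f^m_i$ to the Krasovsky multifunction, which relies on the elementary fact that the preimage of $\{0\}$ under a continuous linear map is a closed convex set and is therefore stable under both closure and convex hull operations. Everything else is a direct integration of the identity $\sum_i\dot x_i=0$.
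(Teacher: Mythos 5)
Your proof is correct: the symmetric-cancellation identity $\sum_i f^m_i\equiv 0$, integrated via absolute continuity for Caratheodory solutions and pushed through the Krasovsky convexification via the observation that $K(x)\subset\ker\Sigma$ because $\ker\Sigma$ is closed and convex, is exactly the standard argument. The paper itself does not reproduce a proof but defers to the cited references, where the same symmetry-based reasoning is used, so your approach matches the intended one.
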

The proof of Proposition~\ref{prop:metric-average} can be found in \cite{frasca} in the case $n=1$ and in \cite{piccoli_rossi_2020} in the general case.
The same property does not hold for solutions of the topological bounded confidence model, by the following example.

\begin{example}[{\em Example \ref{ex:t-soluzioni}, continued}]
Let $x(t)$ be the unique classical  solution starting from the point $(0,-1,1)$. Observe that $x(t)$ is such that $x_{ave}(0)=0$, but the limit of $x(t)$ for $t\to+\infty$  is $(-\frac{1}{2},-\frac{1}{2},-\frac{1}{2})$, so that 
$\lim _{t\to +\infty} x_{ave} (t)=-\frac{1}{2}$.
\end{example}

\subsection{P2) Contractivity of the support}

In this section, we prove that the support of solutions (in any sense given above) is weakly contractive. This is a well-known property of Caratheodory solutions for bounded confidence models, see e.g.\ \cite{blondel2}. The proof of such property for Krasovsky solutions for the metric model \eqref{eq:mbc} on the real line can be found in \cite[Prop.~3.iii]{frasca}.

We will give a general proof for Krasovsky solutions in any dimension, both for the metric \eqref{eq:mbc} and topological models \eqref{eq:tbc}.
The proof is similar to the one provided in \cite{piccoli_rossi_2020},
thus we provide a sketch only.


\begin{prop}[Contractivity of the support]\label{p-contractive} Let $x(t)=(x_1(t),x_2(t),\ldots,x_N(t))$ be a solution to either \eqref{eq:mbc} or \eqref{eq:tbc},
in any of the senses given in Definition \ref{def:ODE-sol}. Assume
$a:[0,+\infty [\to [0,+\infty[$ continuous 
and $0\leq T^1<T^2$, then
\begin{equation}
\overline{co}\left(\left\{x_1(T^1),x_2(T^1),\ldots,x_N(T^1)\right\}\right)\supseteq \overline{co}\left(\left\{x_1(T^2),x_2(T^2),\ldots,x_N(T^2)\right\}\right).\label{e-inclusion}
\end{equation}
\end{prop}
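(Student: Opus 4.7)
The plan is to reduce the set inclusion \eqref{e-inclusion} to a scalar statement about support functions, and to exploit the fact that each agent's admissible velocity always has a non-positive component along any direction in which that agent is extremal. Since $\mathcal{C}l\subseteq \mathcal{C}a\subseteq \mathcal K$ and $\mathcal{C}l\subseteq \mathcal F\subseteq \mathcal K$, it suffices to prove the statement for Krasovsky solutions. Fix a unit vector $v\in\R^n$ and set $M_v(t)=\max_{i\in V}\langle v,x_i(t)\rangle$. Since the closed convex hull of a finite set coincides with the intersection, over all unit $v$, of the closed halfspaces $\{y:\langle v,y\rangle\leq M_v(t)\}$, the inclusion \eqref{e-inclusion} will follow once I prove $M_v(T^2)\leq M_v(T^1)$ for every such $v$. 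Absolute continuity of each $x_i$ makes $M_v$ absolutely continuous, so the claim reduces to $\dot{M}_v(t)\leq 0$ at a.e.\ $t$.

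I then apply a standard envelope argument. Work at a time $t$ in the full-measure set where every $x_i$ and $M_v$ are differentiable, and let $I(t)=\{i\in V:\langle v,x_i(t)\rangle=M_v(t)\}$. For any $i\in I(t)$ the function $\phi_i(s)=M_v(s)-\langle v,x_i(s)\rangle$ is nonnegative and vanishes at $s=t$, hence has zero derivative there, giving $\dot M_v(t)=\langle v,\dot x_i(t)\rangle$. Writing $\bullet\in\{m,t\}$ for the chosen interaction type, the Krasovsky condition yields $\dot x_i(t)\in K_i^\bullet(x(t))$, where $K_i^\bullet(x(t))$ is the closed convex hull of all limits $\lim_n f_i^\bullet(y^{(n)})$ along sequences $y^{(n)}\to x(t)$.

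The core of the argument is the structural description of these limits. I claim that every such limit equals
\[
\sum_{j\in S}a(\|x_j(t)-x_i(t)\|)\,(x_j(t)-x_i(t))
\]
for some $S\subseteq V\setminus\{i\}$: in the metric case $S=N_i^m(x(t))\cup S'$ with $S'\subseteq\{k:\|x_k(t)-x_i(t)\|=1\}$ (since $N_i^m$ is defined by strict inequality, only boundary indices can be toggled by perturbations); in the topological case $S$ is any $\kappa$-subset of $V\setminus\{i\}$ compatible with the distance ordering at $x(t)$ (perturbations can only permute the choice among equidistant candidates). Crucially, the displacement vectors appearing in the limit are the \emph{unperturbed} $x_j(t)-x_i(t)$, so $i\in I(t)$ forces $\langle v,x_j(t)-x_i(t)\rangle\leq 0$ for every $j\in S$; combined with $a(\cdot)\geq 0$ this gives $\langle v,\cdot\rangle\leq 0$ on every such limit, and the inequality is preserved under closed convex hulls. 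Hence $\langle v,\dot x_i(t)\rangle\leq 0$ and $\dot M_v(t)\leq 0$ a.e.

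The main obstacle is precisely this last structural claim in the Krasovsky case: when several agents simultaneously attain the maximum $M_v(t)$, a perturbation $y^{(n)}$ could in principle yield displacements $\langle v,y^{(n)}_j-y^{(n)}_i\rangle$ of either sign, and the set $N_i^\bullet(y^{(n)})$ may vary wildly along the sequence. What rescues the argument is that perturbations only change \emph{which} edges enter the sum, while the values of the displacements in the limit are evaluated at $x(t)$, at which the maximizing property of $i$ supplies the correct sign; combined with nonnegativity of $a$ and the passage to convex hulls, this closes the proof in both the metric and topological models.
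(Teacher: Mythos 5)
Your proof is correct, and its engine is the same as the paper's: reduce to Krasovsky solutions via the inclusions of solution sets, look at the maximum over agents of a linear functional, differentiate that maximum by a Danskin/envelope argument, and observe that every admissible Krasovsky velocity of an extremal agent is a convex combination of terms $a(\|x_j-x_i\|)(x_j-x_i)$ with displacements evaluated at the \emph{unperturbed} configuration, hence has nonpositive component in the extremal direction. The packaging, however, is genuinely different. The paper argues by contradiction: it takes the infimum of times at which the inclusion fails, reduces (via its Claim a, borrowed from \cite{piccoli_rossi_2020}) to that infimum being $0$, picks a single supporting hyperplane of the polytope $X(0)$ at the boundary point $\bar x_i$, and derives $\dot\phi\le 0$ to contradict $\phi(x(t_k))>0$. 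You instead prove directly that \emph{every} support function $M_v(t)=\max_i\, v\cdot x_i(t)$ is absolutely continuous with a.e.\ nonpositive derivative, and recover \eqref{e-inclusion} from the half-space characterization of the closed convex hull. This avoids the contradiction scaffolding and the unproved Claim a entirely, and you also make explicit the structure of the Krasovsky limit set (which sets $S$ of interacting neighbours can appear under perturbation in the metric and topological cases), a point the paper compresses into the bare assertion that $\dot x_j=\sum_k b_{jk}a(\|x_k-x_j\|)(x_k-x_j)$ with $b_{jk}\ge 0$. The only step you should state a bit more carefully is the passage from the Krasovsky inclusion on the full state in $\R^{nN}$ to the componentwise inclusion $\dot x_i(t)\in K_i^\bullet(x(t))$: it holds because the projection of a convex hull is the convex hull of the projections and the relevant sets are compact by local boundedness, but it deserves a sentence. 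With that, your argument is complete and, if anything, more self-contained than the paper's.
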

\begin{proof}  Let $x(\cdot)$ be a Krasovsky solution. Define $X(t):=\overline{co}\left(\left\{x_1(t),x_2(t),\ldots,x_N(t)\right\}\right)$ and
$$A(T^1):=\left\{T\in (T^1,+\infty)\mbox{~~s.t.~~} X(T^1)\not\supseteq X(T)\right\}.$$
We claim that $A(T^1)$ is empty, which implies 
\eqref{e-inclusion}. Otherwise,
by contradiction, we can define $T^3=\inf A(T^1)\geq T^1$. Following the same argument as in \cite{piccoli_rossi_2020}, we can prove the following:\\
\noindent {\bf Claim a)} It holds either $\inf(A(T^1))=T^1$ or 
$\inf(A(T^3))=T^3$.\\
Without loss of generality, we can assume
$T^1=0$ or $T^3=0$, thus $\inf(A(0))=0$.
Let $t_k\searrow 0$ be such that $x_i(t_k)\not\in X(0)$ for a fixed $i\in V$, then by continuity of the trajectory it holds 
$\bar x_i:=x_i(0)\in \partial X(0)$, where $\partial$ indicated the topological boundary.
Since $X(0)$ is a convex polyhedron, it
is supported by a finite number of hyperplanes at $\bar x_i$, thus, by possibly passing to subsequences, we can find a unitary vector $\nu$
such that
$(x_i(t_k)-\bar x_i)\cdot \nu>0$ for all $t_k$. 
Moreover, it holds $(x_j(0)-\bar x_i)\cdot \nu\leq 0$ for all $j\in V$.\\
Now, define $\phi_j(x)=(x_j-\bar x_i)\cdot \nu$, and $\phi(x):=\max_{j\in V}\phi_j(x)$. Observe that $\phi(x(0))\leq 0$ and $\phi(x(t_k))>0$.
We can apply Danskin Theorem \cite{danskin} to $\phi$, thus even though  $\phi$ may be not differentiable, it admits directional derivatives. 
Denote by $h=(h_1,\ldots,h_N)$
the displacement, 
then by applying Danskin formula, the directional derivative $D_h$ along $h$ is given by
\begin{eqnarray*}
D_h \phi(x)&=&\max_{j\in A_i(x)}
\sum_{k=1}^N h_k \cdot \nabla_{x_k} \phi_j(x)=\max_{j\in A_i(x)}
 h_j \cdot \nabla_{x_j} \phi_j(x)=\max_{j\in A_i(x)}
 h_j \cdot \nu, \end{eqnarray*}
where $A_i(x)$ is the set of indexes $j\neq i$ realizing the maximum in the definition of $\phi(x)$.
Since $D_h \phi(x)$ is always defined and $\dot x(t)$ exists for almost every $t\in(0,T)$, 
then also  $\dot \phi(x(t))$ exists for almost every $t\in(0,T)$. Moreover, by direct computation, we get:
\begin{eqnarray}\label{e-derdir}
\lim_{\tau\to 0}\frac{\phi(x(t+\tau))-\phi(x(t))}\tau&=&
\lim_{\tau\to 0}\frac{\phi(x(t)+\tau\dot x(t)+o(\tau))-\phi(x(t))}\tau\\
&=&\lim_{\tau\to 0}\frac{\phi(x(t))+\tau D_{\dot x(t)}\phi(x(t))+o(\tau)-\phi(x(t))}\tau=D_{\dot x(t)}\phi(x(t)),\nonumber
\end{eqnarray}
thus it holds:
\begin{equation} \label{e-dphi}
    \dot \phi(x(t))=\max_{j\in A_i(x(t))}\dot \phi_j(t)=\max_{j\in A_i(x(t))}\dot x_j(t)\cdot \nu.
\end{equation}
Now, if $x(\cdot)$ is differentiable at $t$, and $j\in A_i(x(t))$, then for every $k\neq j$ we have
\begin{equation}
(x_k(t)-x_j(t))\cdot \nu=(x_k(t)-\bar x_i)\cdot \nu+(\bar x_i-x_j(t))\cdot \nu=\phi_k(x(t))-\phi_j(x(t))\leq \phi(t)-\phi(t)=0.\label{e-jineg}
\end{equation}
Since $x(\cdot)$ is a Krasovsky solution, 
there exist $b_{jk}\geq 0$ such that $\dot x_j=\sum_{k=1}^N b_{jk} a(\|x_k-x_j\|)(x_k-x_j)$.
Substituting this expression in \eqref{e-dphi}, we get:
$$\dot \phi(x(t))=\max_{j\mbox{~s.t.~}\phi(t)=\phi_j(t)}\sum_{k=1}^N b_{jk} a(\|x_k-x_j\|)(x_k-x_j)\cdot \nu\leq 0.$$
This contradicts the fact that $\phi(x(0))=0$ and $\phi(t_k)>0$. Thus \eqref{e-inclusion} holds,
for the Krasovsky solution $x(\cdot)$.
Since the proof holds for every Krasovsky solution, by recalling the inclusions of Section~\ref{s-inclusions},
the statement holds for any definition of solution. \end{proof}

\section{Existence and uniqueness of solutions}\label{sect:solutions}

In this section, we study existence and uniqueness of solutions, both for the metric and the topological models.

\subsection{Existence of solutions}
\begin{prop}[Existence of Krasovsky solutions]
For any initial condition, equations \eqref{eq:mbc} and \eqref{eq:tbc} admit a Krasovsky solution defined on $[0,+\infty )$.
\end{prop}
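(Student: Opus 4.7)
The plan is to combine the local existence result from Proposition~\ref{prop:existence-Fil} with the a priori bound supplied by Proposition~\ref{p-contractive}, and then extend via a standard continuation argument for differential inclusions.

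First, I would cast each model in the standard form \eqref{eq:ODE} with state $x=(x_1,\dots,x_N)\in\R^{nN}$ and right-hand side $f^m$ or $f^t$. Both vector fields are measurable (they are piecewise continuous on a stratification, as exploited in the proof of Proposition~\ref{prop:K=F}) and bounded on every bounded subset of $\R^{nN}$, since $a$ is continuous and the differences $(x_j-x_i)$ are themselves bounded on bounded sets. Proposition~\ref{prop:existence-Fil} then yields a Krasovsky solution defined on some nontrivial interval $[0,\tau]$ starting from any given initial condition $\overline x$.

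Next, I would obtain a uniform a priori bound. Let $x(\cdot)$ be a Krasovsky solution defined on a maximal interval $[0,T^*)$ with initial datum $\overline x$. Proposition~\ref{p-contractive} gives, for every $t\in[0,T^*)$ and every $i\in V$, the inclusion $x_i(t)\in \overline{co}(\{\overline x_1,\dots,\overline x_N\})$. Hence the whole trajectory stays in the compact set $\bigl[\overline{co}(\{\overline x_1,\dots,\overline x_N\})\bigr]^N\subset \R^{nN}$, a bound which is uniform in $t$.

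Finally, I would invoke the standard extension theorem for differential inclusions whose right-hand side is upper semicontinuous with nonempty, compact, convex values (see \cite{AubinCellina}): any local solution extends to a maximal interval $[0,T^*)$, and if $T^*<+\infty$ then the trajectory must exit every compact subset of $\R^{nN}$ as $t\to T^{*-}$. The uniform bound above rules out this possibility, forcing $T^*=+\infty$. The same argument applies verbatim to both \eqref{eq:mbc} and \eqref{eq:tbc}, since Proposition~\ref{p-contractive} treats the metric and topological cases on the same footing.

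The main point --- rather than a genuine obstacle --- is to make sure that Proposition~\ref{p-contractive} can legitimately be applied on an arbitrary existence interval before global existence is known. Since its proof only uses the Krasovsky inclusion $\dot x(t)\in K(x(t))$ almost everywhere on the interval of definition, no circular dependence arises and the continuation argument goes through cleanly, bypassing any need for a Gronwall-type estimate.
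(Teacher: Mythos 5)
Your proof is correct and follows essentially the same route as the paper's: local existence via Proposition~\ref{prop:existence-Fil}, the a priori bound from the contractivity of the support (Proposition~\ref{p-contractive}), and a standard continuation argument. The only cosmetic difference is that the paper first obtains a Filippov solution and then invokes Proposition~\ref{prop:K=F} to identify it with a Krasovsky solution, whereas you apply the existence result to the Krasovsky inclusion directly, which Proposition~\ref{prop:existence-Fil} equally permits.
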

\begin{proof}
For both \eqref{eq:mbc} and \eqref{eq:tbc} the right-hand side is locally bounded. The local existence of Filippov solutions then follows from Proposition~\ref{prop:existence-Fil}. By Proposition~\ref{prop:K=F}, the sets of Krasovsky and Filippov solutions coincide, then local  Krasovsky solutions also exist. 
Proposition  \ref{p-contractive} guarantees that solutions are bounded, then they  can  be continued on $[0,+\infty )$ by standard arguments. 
\end{proof}
In general, Krasovsky solutions are not unique, as already shown in Example \ref{ex:t-soluzioni}.
In the following proposition we state the existence of Caratheodory solutions for both metric and topological bounded confidence models. The proof  for the metric model in the case $n=1$ was first given in \cite{blondel2} and then generalized to any $n$ in \cite{piccoli_rossi_2020}. The proof for the topological case with $\kappa=1$ is new. 
We conjecture that the result holds for $\kappa>1$ as well, although with a more involved argument that we avoid to develop here. 

\begin{prop}[Existence of Caratheodory solutions]
\begin{itemize}
    \item[(i)]{\bf Metric bounded confidence.}  For any initial condition, equation \eqref{eq:mbc}  admits a Caratheodory solution defined on $[0,+\infty )$.

\item[(ii)]{\bf Topological bounded confidence.}
If $\kappa=1$, then any initial condition \eqref{eq:tbc} admits a Caratheodory solution defined on $[0,+\infty )$.
\end{itemize}
\end{prop}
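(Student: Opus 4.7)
The plan is to construct a Caratheodory solution by concatenating classical solutions between discontinuity-crossing events. The open set $\R^{nN}\setminus\Delta$ (with $\Delta=\Delta^m$ or $\Delta^t$ as in Proposition~\ref{prop:K=F}) decomposes into finitely many connected components, one for each realisable interaction graph. Within each such component the right-hand side of \eqref{eq:mbc} or \eqref{eq:tbc} is locally Lipschitz, so Picard--Lindel\"of yields a classical (hence Caratheodory) solution that extends until the trajectory either first reaches $\Delta$ or runs for all positive time.

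First I would show that when the trajectory hits $\Delta$ at time $T$ at a point $\bar x$, one can select a neighbouring cell in which to continue. For the metric case, $\bar x\in\Delta_{ij}^m$ means that some pair $(i,j)$ is at distance $1$. Using the specific form of \eqref{eq:mbc}, I would compute the radial derivative $\frac{d}{dt}\|x_i-x_j\|^2$ under each of the two candidate fields and observe that with the edge $(i,j)$ switched on the pair immediately contracts (since $a(1)>0$ and the pairwise interaction is attractive). Hence the solution can be continued by entering the cell where $\{i,j\}\in E(x)$, and it remains there for some positive time; by induction on the finitely many possible graph transitions and using the boundedness provided by Proposition~\ref{p-contractive}, the concatenated solution is Caratheodory and defined on $[0,+\infty)$.

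For the topological case with $\kappa=1$, discontinuities occur on equidistance hyperplanes $\{\|x_i-x_j\|=\|x_i-x_h\|\}$ where agent $i$'s unique nearest neighbour switches. The tie-breaking rule makes $f^t$ pointwise well-defined, but for Caratheodory continuation I would argue as follows: at a hit point $\bar x$, examine the one-sided vector fields from each cell sharing the equidistance stratum and determine which candidate choice of neighbour is consistent with a trajectory that either re-enters the departing cell or crosses transversally into the adjacent one. With $\kappa=1$ only one pair of cells is involved at each switching event, and a direct computation of the sign of $\frac{d}{dt}\bigl(\|x_i-x_j\|^2-\|x_i-x_h\|^2\bigr)$ under each candidate identifies the compatible branch. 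Global-in-time existence then follows because Proposition~\ref{p-contractive} bounds trajectories and there are only finitely many possible neighbourhood configurations, so Zeno-like accumulation of switches in finite time can be excluded by a standard argument.

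The main obstacle will be excluding pathological sliding along $\Delta$: for a Caratheodory solution the ODE must hold for the single-valued right-hand side almost everywhere, not for the Krasovsky set-valued regularisation, so if the trajectory remained on $\Delta$ for a set of positive measure one would need to verify that this is compatible with one of the one-sided fields. In the metric case the attractive interaction at distance $1$ pushes trajectories into the \emph{connected} cell and prevents persistent sliding; in the topological case with $\kappa=1$, a careful inspection of the geometry of the equidistance strata together with the tie-breaking rule plays the analogous role. Extending beyond $\kappa=1$, as the authors note, requires handling the simultaneous switching of several nearest neighbours, which is precisely why the statement is restricted to this case.
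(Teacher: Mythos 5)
Your high-level strategy (concatenate classical solutions cell by cell and select a consistent branch at each crossing of the discontinuity set) is the natural one, and you correctly identify sliding as the main obstacle; moreover, the sign computation you propose does show, for a \emph{single} active discontinuity, that the switching surface cannot be attractive from both sides (in the metric case $\frac{d}{dt}\|x_i-x_j\|^2$ under the ``edge on'' field is strictly smaller than under the ``edge off'' field, and in the topological case the paper's own computation gives $\nabla\theta_{ijk}\cdot f^{t+}-\nabla\theta_{ijk}\cdot f^{t-}=2a(l)l^2(1-\cos\gamma)\geq 0$). However, there is a genuine gap at the heart of the argument: the branch selection is \emph{not} a local, per-pair decision. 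An arbitrary initial condition (and the statement requires \emph{every} initial condition) may place many agents simultaneously on several equidistance strata, with coincident positions and multiple tied nearest neighbours; the sign of $\frac{d}{dt}\bigl(\|x_i-x_j\|^2-\|x_i-x_h\|^2\bigr)$ for agent $i$ depends on the velocities of $j$ and $h$, which in turn depend on \emph{their} (possibly ambiguous) neighbour choices. Your claim that ``with $\kappa=1$ only one pair of cells is involved at each switching event'' is false precisely in these degenerate configurations, and resolving the resulting circular dependence is the actual content of the paper's proof: a recursive algorithm that first assigns mutual nearest neighbours to each other (Step 2), then assigns the remaining tied agents by minimizing the functional $\psi_i(l)=(x_l-x_i)\cdot\bigl(a(\|x_{\Gamma(l)}-x_l\|)(x_{\Gamma(l)}-x_l)-a(\|x_l-x_i\|)(x_l-x_i)\bigr)$ (Step 3), together with a combinatorial claim that the algorithm terminates assigning every agent, and the estimate $\phi'_{ik}(0)=A_{ijk}-B_{ijk}<0$ showing the chosen neighbour remains nearest for positive time. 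Nothing in your proposal substitutes for this. Similarly, in the metric case your assertion that ``with the edge $(i,j)$ switched on the pair immediately contracts'' ignores the contributions of all other neighbours to $\frac{d}{dt}\|x_i-x_j\|^2$ and is not correct as stated (the correct continuation may well be with the edge off).

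A second, smaller gap is the global-in-time extension. ``Finitely many neighbourhood configurations'' does not exclude infinitely many switches accumulating at a finite time, and no standard argument rules this out here. The paper does not exclude Zeno behaviour; instead it shows that the concatenated trajectory is uniformly Lipschitz (by contractivity of the support, Proposition~\ref{p-contractive}), so that if the switching times $T_i$ accumulate at some finite $T^*$ the limit $x(T^*)$ exists, the construction restarts there, and a transfinite induction yields a solution on all of $[0,+\infty)$. You should replace your Zeno claim with an extension-past-accumulation argument of this type. Finally, note that the paper proves only part (ii) and cites \cite{blondel2,piccoli_rossi_2020} for part (i); your sketch for the metric case would need the same global branch-selection machinery to become a proof.
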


\begin{proof}
We only consider the topological bounded confidence model with $\kappa=1$. We build a Caratheodory solution as follows. 
For each initial datum $\bar x=(\bar x_1,\ldots,\bar x_N)$, we construct an oriented graph $G$ for which there exists $T>0$
and a curve defined on $[0,T]$
having $G$ as connectivity graph.
For each index $i\in V$ there exists one and only one index, that we denote with $\Gamma(i)$, such that $(i,\Gamma(i))\in G$. This implies that $\dot x_i=a(\|x_{\Gamma(i)}-x_i\|) (x_{\Gamma(i)}-x_i)$ for the whole time interval $[0,T]$. We then need to prove that the corresponding trajectory $(x_1(t),\ldots,x_N(t))$ is indeed a Caratheodory solution for \eqref{eq:tbc}. Remark that one might aim to choose $\Gamma(i)$ as the single element of $N_i^t(\bar x)$, that is the minimal index (in the lexicographic order) among nearest neighbours of $\bar x_i$. In our proof, this is not the case, as one might choose $\Gamma(i)$ not to be the nearest neighbour with minimal index at the initial time, but to be the nearest neighbour for all $t\in(0,T)$.
We then conclude the proof by piecing together Caratheodory solutions on time intervals $[0,T_1],[T_1,T_2],\ldots$ to build a solution on $[0,+\infty)$.\\

Let $\bar x$ be an initial configuration, that is fixed from now on. We build an oriented graph $G$ recursively
by selecting, for each index  $i\in V$, a unique index $\Gamma(i)$ such that $(i,\Gamma(i))\in G$. 
First define:
$$A_i:=\mathrm{argmin}_{j\neq i}(\|\bar x_j-\bar x_i\|),$$
which is the set of agents realizing the minimal distance to $\bar x_i$.\\
The graph $G$ is constructed using
the following algorithm:

\begin{description}
    \item[Step 1)] For all $i$ such that $N_i^t=\{j\}$, define $\Gamma(i):=j$.

    \item[Step 2)] WHILE there exists a pair of indexes $i,j$ such that $i\not\in G$, $j\not\in G$ and $j\in A_i,i\in A_j$
    
    DO: define $\Gamma(i):=j$ and $\Gamma(j):=i$.

    \item[Step 3)] IF there exists $i\notin G$ such that $A_i=\{j_1,\ldots,j_l\}$ and $j_1,\ldots,j_l\in G$
    
    DO: choose
    \begin{equation}
            \Gamma(i)\in 
            \mathrm{argmin}_{j\in \{j_1,\ldots,j_l\}} \psi_i(j)
            \label{e-step3}
    \end{equation}
    where 
    \begin{equation}\label{e-psi}
        \psi_i(l):=(x_l-x_i)\cdot\left(a(\|x_{\Gamma(l)}-x_l\|)(x_{\Gamma(l)}-x_l)-a(\|x_{l}-x_i\|)(x_{l}-x_i)\right)
    \end{equation}
    \item[Step 4)] IF for all $i\in V$ it holds $i\in G$, STOP.
    
    ELSE: Go to Step 3.
\end{description}
Observe that the number of edges of $G$
is increased at each step and is bounded by $N$. Thus, there exists a limit graph $G'$, reached after a finite number of steps. 
We now prove the following claim:
\begin{center}
{\bf (C)}\hspace{1cm}  for all $i\in V$ ~~~~it holds~~~~ $i\in G'$.
\end{center}
To prove {\bf (C)}, assume by contradiction
that there exists $i\notin G'$ and,
by possibly relabeling indexes, $i=1\not\in G'$.
By definition of $G'$, Step 3 does not add edges
to $G'$, in particular no edge to agent $i=1$; thus $A_1$
contains at least one index, that we relabel as $2$, such that $2\not\in G'$. 
If $1\notin A_2$, we can find another index,
relabeled as $3$, such that $3\notin G'$ and so on.
Finally there exists $k\not\in G'$ such that $A_k\cap \{1,2,\ldots,k-1\}\neq \emptyset$.
Possibly reducing the sequence and changing the initial element, we assume $1\in A_k$.\\
Now, if there exist $i,i+1\in \{1,2,\ldots,k\}$ such that $i+1\in A_i$ and $i\in A_{i+1}$, then
we are in contradiction with Step 2.
Therefore, we can assume that for all $i\in\{1,2,\ldots,k-1\}$ it holds $i\not\in A_{i+1}$. Since $i+2\in A_{i+1}$ by construction, we have 
    \begin{equation} \label{e-strict}
        \|x_i-x_{i+1}\|>\|x_{i+1}-x_{i+2}\|\mbox{~~~~for all~}i\in\{1,2,\ldots,k-2\}.
    \end{equation}
Using \eqref{e-strict} and recalling $1\in A_k$ we get
    $$\|x_1-x_2\|>\|x_2-x_3\|>\ldots>\|x_{k-1}-x_k\|>\|x_k-x_1\|.$$
This implies $2\not\in A_1$, achieving a contradiction.
This concludes the proof of claim {(\bf C)}.\\

\noindent
Let $x(\cdot)$ be the curve
satisfying $\dot x_i=a(\|x_{\Gamma(i)}-x_i\|) (x_{\Gamma(i)}-x_i)$, i.e.\ with dynamics
associated to $G$,
with initial condition $\bar x$.
We first show that there exists a time $T>0$ such that $\Gamma(i)\in \mathrm{argmin}_{j\neq i}(\|x_j(t)-x_i(t)\|)$ for all $t\in[0,T]$. 
More precisely, for each $i\in V$, 
and $k\in V\setminus\{i,\Gamma(i)\}$ 
we show that there exists  $T_{ik}>0$ such that
$\|x_i(t)-x_k(t)\|\geq \|x_i(t)-x_{\Gamma(i)}(t)\|$
on $[0,T_{ik}]$. Then it will be sufficient
to define $T= \min_{ik} T_{ik}$ (with the convention that the minimum is $+\infty$ if all $T_{ik}=+\infty$).\\
Now, fix  $i\in V$, 
and $k\in V\setminus\{i,\Gamma(i)\}$.
Notice that if $\|\bar x_i-\bar x_k\|>\|\bar x_i-\bar x_{\Gamma(i)}\|$,
then by continuity there exists $T_{ik}>0$ such that $\|x_i(t)-x_k(t)\|>\|x_i(t)-x_{\Gamma(i)}(t)\|$ for all $t\in[0,T_{ik}]$.
Therefore, from now on, we assume
$$\|\bar x_i-\bar x_k\|\leq \|\bar x_i-\bar x_{\Gamma(i)}\|.$$
By definition of $A_i$, this inequality is indeed an equality, otherwise $\Gamma(i)\not\in A_i$. 
We distinguish two sub-cases:\\
{\bf Case 1)} $\bar x_k=\bar x_{\Gamma(i)}$.\\
{\bf Case 2)} $\bar x_k\neq \bar x_{\Gamma(i)}$.\\
In {\bf Case 1)} there exists a (possibly empty) set of indexes $L:=\{l_1,\ldots,l_r\}$ such that $\bar x_k=\bar x_{\Gamma(i)}=\bar x_{l_m}$, hence $A_k=L\cup\{{\Gamma(i)}\}$. From claim {\bf (C)}, for each $l\in\{k,{\Gamma(i)}\}\cup L$ the neighbour $\Gamma(l)$ is well-defined, thus $\bar x_l=\bar x_{\Gamma(l)}$ by the condition of minimal distance. This in turn implies $\dot x_l\equiv 0$, and similarly for all other indexes in $A_k$. 
Since all indexes $l\in\{k,{\Gamma(i)}\}\cup L$ satisfy such property, it holds $A_l\subseteq \mathrm{argmin}_j (\|x_j(t)-x_l(t)\|)$. Observe moreover that the dynamics does not allow for merging particles, thus the inclusion is indeed an equality. This in turn means that $x_k(t)=x_{\Gamma(i)}(t)$ for all $t>0$, hence we can choose $T_{ik}=+\infty$.\\
Consider now {\bf Case 2)}.
Observe that $\bar x_k\neq \bar x_i$, otherwise $\|\bar x_{\Gamma(i)}-\bar x_i\|\leq \|\bar x_k-\bar x_i\|=0$, which gives $\bar x_{\Gamma(i)}=\bar x_i=\bar x_k$.
We now prove that there exists $T_{ik}>0$ such that
        \begin{equation} \label{e-condki-strong}
        \|x_i(t)-x_{\Gamma(i)}(t)\|< \|x_i(t)-x_k(t)\|\mbox{~~~ for all $t\in(0,T_{ik}]$.}
        \end{equation}
Consider the function
$$\phi_{ik}(t):=\tfrac12 \|x_{\Gamma(i)}(t)-x_i(t)\|^2-\tfrac12 \|x_k(t)-x_i(t)\|^2.$$
Since $\phi_{ik}(0)=0$,
to prove \eqref{e-condki-strong}
it is enough to show $\phi'_{ik}(0)<0$. 
Set $j=\Gamma(i)$, then from \eqref{e-psi}, we get    \begin{eqnarray*}
        \phi'_{ik}(0)&=&(\bar x_j-\bar x_i)\cdot (\dot x_j(0)-\dot x_i(0))-(\bar x_k-\bar x_i)\cdot (\dot x_k(0)-\dot x_i(0))=A_{ijk}-B_{ijk}
    \end{eqnarray*}
    with
    \begin{eqnarray*}
        A_{ijk}&=&\psi_i(j)-\psi_i(k)=
        (\bar x_j-\bar x_i)\cdot (a(\|\bar x_{\Gamma(j)}-\bar x_j\|)(\bar x_{\Gamma(j)}-\bar x_j)-(\bar x_j-\bar x_i)\cdot a(\|\bar x_j-\bar x_i\|)(\bar x_j-\bar x_i)\\
        &&-(\bar x_k-\bar x_i)\cdot
        a(\|\bar x_{\Gamma(k)}-\bar x_k\|)(\bar x_{\Gamma(k)}-\bar x_k)+(\bar x_k-\bar x_i)\cdot(a(\|\bar x_k-\bar x_i\|)(\bar x_k-\bar x_i)),\nonumber\\
        B_{ijk}&=&(\bar x_k-\bar x_i)\cdot(a(\|\bar x_k-\bar x_i\|)(\bar x_k-\bar x_i))-(\bar x_k-\bar x_i)\cdot(a(\|\bar x_j-\bar x_i\|)(\bar x_j-\bar x_i))).
    \end{eqnarray*}
The idea of the decomposition is that the last term in $A_{ijk}$ would correspond to $(\bar x_k-\bar x_i)\dot x_i(0)$ by choosing $k$ as the neighbour of $i$. Thus, $B_{ijk}$ is the corrector given by the actual choice $j=\Gamma(i)$. 
We now show $A_{ijk}\leq 0$ and $B_{ijk}>0$,
which implies $\phi'_{ik}(0)<0$.\\
Consider first $A_{ijk}$. The index $j=\Gamma(i)$ 
was not chosen in Step 1 since $k\neq j$ and $k,j\in A_i$.
If $j=\Gamma(i)$  was chosen in Step 2, then $\Gamma(j)=i$, and
    \begin{eqnarray*}
        A_{ijk}&\leq&-2\|\bar x_j-\bar x_i\|^2 a(\|\bar x_j-\bar x_i\|)+\|\bar x_k-\bar x_i\| \cdot \|\bar x_{\Gamma(k)}-\bar x_k\|
        a(\|\bar x_{\Gamma(k)}-\bar x_k\|)+\\
       &&\|\bar x_k-\bar x_i\| \cdot \|\bar x_k-\bar x_i\|
        a(\|\bar x_k-\bar x_i\|).
    \end{eqnarray*}
By definition of $\Gamma(k)$ we have $\|\bar x_{\Gamma(k)}-\bar x_k\|\leq \|\bar x_i-\bar x_k\|$, and, since we are in {\bf Case 2}, it holds $\|\bar x_j-\bar x_i\|=\|\bar x_k-\bar x_i\|$. 
Recalling that $a(r)$ is non-decreasing, we get $A_{ijk}\leq 0$.
Assume now that  $j=\Gamma(i)$ was chosen in Step 3, then, by construction 
$j\in  \mathrm{argmin}_{\ell} \psi_i(\ell)$,
thus $0\geq \psi_i(j)-\psi_i(k)= A_{ijk}$.\\

We now prove $B_{ijk}>0$. Observe that $j,k\in A_i$, hence both $\bar x_j$ and $\bar x_k$ lay on 
the same circle centered at $\bar x_i$ on the plane
containing $\bar x_i$, $\bar x_j$ and $\bar x_k$. 
Thus $a(\|\bar x_k-\bar x_i\|)=a(\|\bar x_j-\bar x_i\|)$. Since $\bar x_k\neq \bar x_j$ ({\bf Case 2}) and
$\bar x_k \neq \bar x_i$, the amplitude $\alpha$ of the angle $\reallywidehat{\bar x_j \bar x_k \bar x_i}$,
on the plane containing $\bar x_i$, $\bar x_j$ and $\bar x_k$, belongs to $\left(-\pi/2,\pi/2\right)$, thus 
\begin{eqnarray*}
    B_{ijk}&=&(\bar x_k-\bar x_i)\cdot a(\|\bar x_j-\bar x_i\|) (\bar x_k-\bar x_j)=\\
    &&a(\|\bar x_j-\bar x_i\|) \|\bar x_k-\bar x_i\|\, \|\bar x_k-\bar x_j\|\cos(\alpha)>0.
\end{eqnarray*}
From $A_{ijk}\leq 0$ and $B_{ijk}>0$ we 
get $\phi'_{ik}(0)<0$ and we are done.\\

\noindent
We now show that $x(\cdot)$ is a Caratheodory solution of \eqref{eq:tbc}.
If $\Gamma(i)=N_i^t(x(t))$ for all times $t\in(0,T)$, then we are done.
Otherwise, there exists $t\in(0,T)$ such that
$\Gamma(i)\neq k=N_i^t(x(t))$, thus  $\|x_{\Gamma(i)}(t)-x_i(t)\|\geq\|x_k(t)-x_i(t)\|$. Recalling the definition of $T_{ik}$,
we deduce that {\bf Case 1)} holds, thus
$\Gamma(i)\neq k=N_i^t(x(t))$ and $x_{\Gamma(i)}(t)=x_ k(t)$, i.e.\ the indexes $k$ and $\Gamma(i)$ are different but the agents' positions coincide. 
As a consequence, we have
$$\dot x_i=a(\|x_{\Gamma(i)}-x_i\|)(x_{\Gamma(i)}-x_i)=a(\|x_k-x_i\|)(x_k-x_i),$$
and \eqref{eq:tbc} is satisfied.\\

\noindent
We now prove that the trajectory can be prolonged to $[0,+\infty)$. If $T=+\infty$, then we are done.
Otherwise, observe that the trajectory $x(\cdot)$ is compact, due to contractivity of the support proved in Proposition~\ref{p-contractive}, thus
we can use transfinite induction as follows.
Since $\dot x(\cdot)$ is uniformly bounded,  $x(\cdot)$ is a uniformly Lipschitz function of time with Lipschitz constant $L:=\max_{ij}a(\|\bar x_i-\bar x_j\|)$, and $x(T)$ is well-defined. We can apply the same algorithm at time $T$, and find $T_1>0$ such that the trajectory is well-defined on $[T,T_1]$. If $T_1=+\infty$, we are done; otherwise define $T_2<T_3<\ldots$ in the same way and extend the trajectory to $[T_i,T_{i+1}]$. If $T_i=+\infty$ for some $i$ or $\lim_{i\to +\infty}T_i=+\infty$, then we are done. Assume, by contradiction, that $T^*=\lim_{i\to +\infty} T_i<+\infty$. Then $x(T^*)$ is well-defined and using the algorithm we can extend the trajectory beyond $T^*$.\\
Using for $T_i$ the same argument as for $T$,
we have that $x(\cdot)$ is a Caratheodory solution
to \eqref{eq:tbc} on each interval $(T_l,T_{l+1})$.
Since $\{T_i\}$ is a countable set, we are done.
\end{proof}

\subsection{Uniqueness for almost every initial condition}

In this section, we study uniqueness of solutions. Examples \ref{ex:m-soluzioni} and \ref{ex:t-soluzioni} show that Caratheodory solutions are not unique in general (thus neither Krasovsky).
Nevertheless, uniqueness of Krasovsky (and then also Caratheodory) solutions holds  
for almost all initial condition, both for metric and topological models. For the metric case, the result was already given in \cite[Prop.~6.2]{piccoli_rossi_2020}.

We then focus on uniqueness of Krasovsky solutions for almost every initial datum for \eqref{eq:tbc}. We first set
\[
I=\{(i,j,k): i\not= j,
i\not= k, j\not= k\}
\]
and define
\[
{\MM}=\cup_{ijk\in I}\MM_{ijk},\quad
\MM_{ijk}=\{x: 
\|x_i-x_j\|=\|x_j-x_k\|\}.
\]
Notice that $\MM$ contains (in general strictly contains) the set where the right-hand side of  \eqref{eq:tbc} is discontinuous.\\
The main reason for uniqueness is that 
Krasovsky solution cannot enter the manifolds $\MM$ and slide on it, except possibly on a set of codimension two.
We first show this fact for the case $\kappa=1$ for simplicity.

Given $(i,j,k)\in I$,
consider the functions
\begin{equation}\label{eq:def-thetaijk}
 \theta_{ijk}(x) = \|x_j-x_i\|^2- \|x_k-x_i\|^2
\end{equation}
and denote by $\pi_{ijk}$ the subset of the  manifold $\MM_{ijk}$ where 
the right-hand side $f^t$ of  \eqref{eq:tbc} is discontinuous and where $\theta_{jvw}(x)\theta_{khu}(x)$ is different from zero for all $v,w,h,u$ (that is, where the only discontinuity is due to $j$ or $k$). 
We want to prove that $\pi_{ijk}$ cannot be attractive with respect to Krasovsky solutions. 
Fix $\bar x$ a discontinuity point for $f^t$, thus
$\bar x \in \pi_{ijk}$ for some ${(i,j,k)}\in I$   
and either $j\in N^t_i(\bar x)$ or $k\in N^t_i(\bar x)$. 
We denote by $f^{t+}(\bar x)$ and $f^{ t-}(\bar x)$ the limit values of $f^t(x)$ as $x\to \bar x$ and  $\theta_{ijk}(x)>0$ (the neighbor of $i$ is then $k$) and
$\theta_{ijk}(x)<0$ (the neighbor of $i$ is then $j$) respectively.
%
%
We denote by $\Gamma (j)$ the neighbor of $j$ at $\bar x$ and by $\Gamma (k)$ the neighbor of $k$ at $\bar x$. We first denote by $\gamma$ the angle  between the vectors $\bar x_j-\bar x_i$ and $\bar x_k-\bar x_i$ and $l=\| \bar x_j-\bar x_i\|= \| \bar x_j-\bar x_i\|$. If $l=0$, the angle is not uniquely defined, but this plays no role in the following.
Let us compute the two quantities 
 $ \nabla \theta_{ijk} (\bar x)\cdot f^{t+}(\bar x)$ and $ \nabla \theta_{ijk} (\bar x)\cdot f^{t-}(\bar x)$.  We have:
 %
%
\begin{eqnarray*}
\nabla\theta_{ijk} (\bar x)\cdot f^{+t}(\bar x) = & (\bar x_j-\bar x_i)\cdot 
[a(\|\bar x_{\Gamma (j)}-\bar x_j\|)(\bar x_{\Gamma (j)}-\bar x_j)- a(\|\bar x_k-\bar x_i\|)(\bar x_k- \bar x_i)]\\ 
& - (\bar x_k-\bar x_i) \cdot 
[a(\|\bar x_{\Gamma (k)}-\bar x_k\|)(\bar  x_{\Gamma (k)}-\bar x_k)- a(\|\bar x_k-\bar x_i\|)(\bar x_k- \bar x_i)]\\
 = & a(\|\bar x_{\Gamma (j)}-\bar x_j\|)
(\bar x_j-\bar x_i)\cdot 
(\bar x_{\Gamma (j)}-\bar x_j)- a(l)l^2 \cos (\gamma) \\ 
& - a(\|\bar x_{\Gamma (k)}-\bar x_k\|) (\bar x_k-\bar x_i) \cdot 
(\bar x_{\Gamma (k)}-\bar x_k)+ a(l) l^2
\end{eqnarray*}
and 
\begin{eqnarray*}
\nabla \theta_{ijk} (\bar x)\cdot f^{t-}(\bar x) = & (\bar x_j-\bar x_i)\cdot 
[a(\|\bar x_{\Gamma (j)}-\bar x_j\|)(\bar x_{\Gamma (j)}-\bar x_j)- a(\|\bar x_j-\bar x_i\|)(\bar x_j- \bar x_i)]\\ 
 & -(\bar x_k-\bar x_i) \cdot 
[a(\|\bar x_{\Gamma (k)}-\bar x_k\|)(\bar x_{\Gamma (k)}-\bar x_k)- a(\|\bar x_j-\bar x_i\|)(\bar x_j- \bar x_i)]\\
 = & a(\|\bar x_{\Gamma (j)}-\bar x_j\|)
(\bar x_j-\bar x_i)\cdot 
(\bar x_{\Gamma (j)}-\bar x_j)- a(l)l^2 \\ 
& -a(\|\bar x_{\Gamma (k)}-\bar x_k\|) (\bar x_k-\bar x_i) \cdot 
(\bar x_{\Gamma (k)}-\bar x_k)+ a(l) l^2\cos (\gamma). 
\end{eqnarray*}
We have:
$$\nabla\theta_{ijk} (\bar x)\cdot f^{t+}(\bar x)-
\nabla \theta_{ijk} (\bar x)\cdot f^{t-}(\bar x)=-a(l)l^2\cos (\gamma) +a(l)l^2+a(l)l^2 -a(l)l^2\cos (\gamma)=2a(l)l^2 (1- \cos (\gamma)).$$
First of all, we remark that 
$\nabla\theta_{ijk} (\bar x)\cdot f^{t+}(\bar x)-
\nabla \theta_{ijk} (\bar x)\cdot f^{t-}(\bar x)\geq 0$. 
Moreover $\nabla\theta_{ijk} (\bar x)\cdot f^{t+}(\bar x)-
\nabla \theta_{ijk} (\bar x)\cdot f^{t-}(\bar x)=0$ if and only if $\gamma =0$, so that $f^{t+}(\bar x)$ and $f^{t-}(\bar x)$ are parallel.
In this case, $\pi_{ijk} $ is crossed by Krasovsky solutions, unless $f^{t+}(\bar x)$ is tangent to the manifold, but this may occur only on a set of codimension at least two.
Let us then consider the 
case 
$\nabla\theta_{ijk} (\bar x)\cdot f^{t+}(\bar x)-
\nabla \theta_{ijk} (\bar x)\cdot f^{t-}(\bar x)> 0$
and analyze different possibilities. 
\begin{itemize}
    \item Case $\nabla\theta_{ijk} (\bar x)\cdot f^{t+}(\bar x)>0$. If also $\nabla\theta_{ijk} (\bar x)\cdot f^{t-}(\bar x)>0$, then Krasovsky solutions
cross $\pi_{ijk}$. If 
$\nabla\theta_{ijk} (\bar x)\cdot f^{t-}(\bar x)\leq 0$, then Krasovsky solutions can either leave $\pi_{ijk}$ or slide on it. 
\item Case $\nabla\theta_{ijk} (\bar x)\cdot f^{t+}(\bar x)\leq 0$. Then  $\nabla\theta_{ijk} (\bar x)\cdot f^{t-}(\bar x)<0$ and Krasovsky solutions  cross $\pi_{ijk}$.
\end{itemize}

We conclude that solutions, which originate from outside $\pi_{ijk}$ and reach it, must cross it. Therefore, uniqueness can only fail for (sliding) solutions that originate inside $\pi_{ijk}$.
After this informal argument for $\kappa=1$, we proceed to give a complete proof for any $\kappa$, thereby completing the proof of  Theorem \ref{t-exun}. 

\begin{prop}[Uniqueness from almost any initial datum, topological]
\label{th:aeuniq-t}
The set of initial data from which there exist more than one Krasovsky solutions for \eqref{eq:tbc} has zero Lebesgue measure in $\R^{nN}$.
\end{prop}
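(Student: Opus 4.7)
My plan is to reduce the general $\kappa$ case to the same geometric mechanism identified in the informal $\kappa=1$ argument above, by viewing the discontinuity set of $f^t$ as a stratified set (in the sense of Definition~\ref{def:strat}) of codimension at least $1$ in $\R^{nN}$, showing that on its codimension-$1$ strata every Krasovsky solution is forced to cross transversally, and then bounding the set of initial data whose trajectories touch the remaining codimension-$\geq 2$ strata by a standard Fubini/transversality argument, as in the metric case of \cite[Prop.~6.2]{piccoli_rossi_2020}.

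The key observation that makes the reduction work for arbitrary $\kappa$ is that across a generic point of $\pi_{ijk}$ the only change in the neighborhood structure is the swap of the single index $j\leftrightarrow k$ inside $N_i^t$: the remaining $\kappa-1$ neighbors of $i$ and the entire neighborhoods of all other agents agree on both sides. Consequently the jump $f^{t+}(\bar x)-f^{t-}(\bar x)$ is concentrated in the $i$-th block and has the same algebraic form used in the $\kappa=1$ derivation. I then define $\Sigma\subset\MM$ as the union of all lower-dimensional strata, namely points lying on at least two distinct $\MM_{ijk}$'s, together with the exceptional points of $\pi_{ijk}$ where $\bar x_j=\bar x_k$ or $\bar x_j=\bar x_i$. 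This $\Sigma$ is a finite union of $\mathcal{C}^1$ submanifolds of $\R^{nN}$ of codimension at least $2$, while the regular parts $\pi_{ijk}\setminus\Sigma$ are $\mathcal{C}^1$ hypersurfaces.

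On the regular part $\pi_{ijk}\setminus\Sigma$, the computation done for $\kappa=1$ transfers verbatim and yields
\[
\nabla\theta_{ijk}(\bar x)\cdot\bigl(f^{t+}(\bar x)-f^{t-}(\bar x)\bigr)=2\,a(l)\,l^2\,(1-\cos\gamma),
\]
with $l=\|\bar x_j-\bar x_i\|=\|\bar x_k-\bar x_i\|$ and $\gamma$ the angle between $\bar x_j-\bar x_i$ and $\bar x_k-\bar x_i$. This quantity is strictly positive on $\pi_{ijk}\setminus\Sigma$, so the case analysis recalled just before the proposition (splitting on the sign of $\nabla\theta_{ijk}\cdot f^{t+}$) forces any Krasovsky solution arriving from outside $\pi_{ijk}$ to cross it transversally at an isolated instant; sliding motion on $\pi_{ijk}\setminus\Sigma$ cannot be reached from outside, and on each side of $\pi_{ijk}$ the differential inclusion locally reduces to an ODE with Lipschitz right-hand side, so uniqueness is preserved through the crossing.

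Non-uniqueness of Krasovsky solutions is therefore confined to initial data whose trajectory meets $\Sigma$ at some positive time. Since $\Sigma$ is a finite union of $\mathcal{C}^1$ submanifolds of codimension $\geq 2$ in $\R^{nN}$, it is Lebesgue-null. Off $\Sigma$, by the previous paragraph the Krasovsky trajectories consist of finitely many pieces of classical flow separated by transversal crossings of the codimension-$1$ strata; the Fubini-type argument used in \cite[Prop.~6.2]{piccoli_rossi_2020} for the metric model then shows that the preimage of $\Sigma$ under this piecewise Lipschitz flow on any bounded time interval remains Lebesgue-null, and a countable union over $[0,k]$, $k\in\N$, closes the proof. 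The main technical obstacle is bounding the number of successive discontinuity crossings on a compact time interval, so that the piecewise flow is well defined and each transversal crossing induces a Lipschitz transfer map; this is handled exactly as in the metric case by exploiting that the combinatorial configuration $G(x)$ takes only finitely many values along any bounded trajectory.
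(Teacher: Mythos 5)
Your overall strategy (isolate a codimension-two exceptional set, show transversal crossing elsewhere, conclude by a Fubini argument) is the same as the paper's, but your exceptional set $\Sigma$ is too small, and this is a genuine gap. You take $\Sigma$ to be the pairwise intersections of the strata $\MM_{ijk}$ together with the degenerate points $\bar x_j=\bar x_k$ or $\bar x_j=\bar x_i$, and you claim that on $\pi_{ijk}\setminus\Sigma$ the strict inequality $\nabla\theta_{ijk}\cdot\bigl(f^{t+}-f^{t-}\bigr)=2a(l)l^2(1-\cos\gamma)>0$ forces every Krasovsky solution arriving from outside to cross transversally. That inequality only tells you the surface is not attractive from both sides; it does not exclude \emph{tangential} arrival, i.e.\ points where $\nabla\theta_{ijk}\cdot f^{t-}=0$ (or $\nabla\theta_{ijk}\cdot f^{t+}=0$) at the contact point. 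At such a point the sign analysis gives no strict crossing, and several Krasovsky continuations (grazing back, crossing, sliding) can coexist, so uniqueness genuinely fails there. The tangency locus is a codimension-one condition \emph{inside} $\MM_{ijk}$ that has nothing to do with $\gamma=0$ or $l=0$, so it is not contained in your $\Sigma$. This is precisely why the paper enlarges the exceptional set to $\widehat{\MM}_{ijk}$, defined by the vanishing of $C(x)+2a(\|x_i-x_j\|)(x_j-x_i)\cdot(x_k-x_j)$ or $C(x)+2a(\|x_i-x_j\|)(x_k-x_i)\cdot(x_k-x_j)$, where $C(x)$ collects the contribution of the unchanged neighbors for general $\kappa$; these are exactly the endpoint values ($\alpha=1$ and $\alpha=0$) of the one-sided derivatives of $\theta_{ijk}$, and requiring both to be nonzero is what rules out tangency and makes Claim~a) (strict crossing and local uniqueness) work.

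Your closing measure argument also differs from the paper's in a way that leaves a hole you yourself flag but do not close. You pull back $\Sigma$ under a ``piecewise Lipschitz flow'' consisting of finitely many classical arcs separated by transversal crossings, and you justify the finiteness of the pieces by saying the interaction graph $G(x)$ takes finitely many values along a bounded trajectory. Finitely many possible configurations does not bound the number of \emph{switches}: Zeno-type accumulation of crossings on a compact interval is not excluded by that observation, so the ``piecewise flow'' and the associated Lipschitz transfer maps are not known to be well defined. The paper avoids this entirely: it only uses that, for a bad initial datum $\bar x\in\mathcal A$, all solutions coincide up to the first hitting time of $\widehat{\MM}$, so the trajectory up to that time is a single Lipschitz arc with $H^{1+\eps}$-measure zero; integrating these one-dimensional fibers over the codimension-two set $\widehat{\MM}$ with respect to $H^{nN-2+\eps}$ gives $H^{nN}(\mathcal A)=0$ directly, with no need to count crossings or to invert any flow. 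To repair your proof you would need to (i) add the tangency locus to $\Sigma$ and verify it has codimension two, and (ii) replace the flow-preimage step by a fiberwise Hausdorff-measure estimate of this kind.
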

\begin{proof} 
Fix an initial condition $\bar{x}$ and let $X_{\bar{x}}$ 
be the set of solutions  $x(\cdot)$ to \eqref{eq:tbc} such
that  $x(0)=\bar{x}$ defined on $[0,T(x(\cdot))[$, with $0<T(x(\cdot))\leq +\infty$. Define
\begin{equation}\label{eq:def-tU}
t_U=\inf \{t:  \exists x(\cdot),y(\cdot)\in X_{\bar{x}}, 
t\leq\min \{T(x(\cdot)),T(y(\cdot))\}, x(t)\not= y(t)\},
\end{equation}
and
\begin{equation}\label{eq:def-A}
\mathcal{A}=\{\bar{x}\in \R^{nN}\setminus\MM: t_U<+\infty\}.
\end{equation}
Notice that $\MM$ is a stratified set of codimension 1, thus of zero Lebesgue measure in  $\R^{nN}$. Therefore the statement
is equivalent to prove that $\mathcal{A}$ has zero Lebesgue measure.
For $\bar x\in \mathcal{A}$, we define:
\begin{equation}
\tilde{t}=\inf \{t: \exists x(\cdot)\in X_{\bar{x}}, x(t)\in\MM\}.
\end{equation}
Since \eqref{eq:tbc} is Lipschitz continuous on $ \R^{nN}\setminus\MM$,
there exists a unique solution in $X_{\bar{x}}$ at least until reaching $\MM$, thus $\tilde{x}=x(\tilde{t})\in\MM$ depends only on $\bar{x}$.
Now define the set of indexes
\[
J=\{(i,j,k,i',j',k'): (i,j,k)\not= (i',j',k')\},
\]
and the stratified sets:
\[
\MM_{ijki'j'k'}=
\MM_{ijk}\cap \MM_{i'j'k'},
\]
We now analyze the dynamics on
$\MM\setminus \left(\cup_{(i,j,k,i',j',k')\in J}
{\MM}_{ijki'j'k'}\right)$ 
to identify a stratified set of codimension
two out of which trajectories cross
$\MM$ transversally.
Consider now $x\in\MM_{ijk}$, 
assume  $(i,j,k)$ is the unique index
for which $x\in\MM_{ijk}$. 
We also assume
$\|x_i-x_j\|=\max_{\ell\in N^t_i(x)}\|x_i-x_\ell\|$, i.e. $j$ is among the farthest $\kappa\geq 1$ neighbours of $i$, otherwise $N^t_i$ is constant in a neighbor
of $x$ and uniqueness holds.
Since $(i,j,k)$ is the unique index
for which $x\in\MM_{ijk}$, we indeed have that
$\max_{\ell\in N^t_i(x)}\|x_i-x_\ell\|$
is achieved exactly for indexes $j$ and $k$.
Now, set $P_i= N^t_i(x)\setminus \{j,k\}$,
$P_j= N^t_j(x)$,
$P_k= N^t_k(x)$.
Define the following:
\begin{equation}
f_m(x)=\sum_{\ell\in P_m} a(\|x_\ell-x_m\|)(x_\ell-x_m)
\end{equation}
for $m=i,j,k$.
Then a Krasovsky solution $y(\cdot)$ with $y(0)=x$, if differentiable at $0$, satisfies:
\[
\dot{y}_i(0)=f_i(x)+\alpha\,
a(\|x_j-x_i\|)(x_j-x_i)+(1-\alpha)\,
a(\|x_k-x_i\|)(x_k-x_i),
\]
for some $\alpha\in [0,1]$, and:
\[
\dot{y}_j(0)=f_j(x)
,\quad
\dot{y}_k(0)=f_k(x).
\]
Recall the definition of the function
$\theta_{ijk}$ given in \eqref{eq:def-thetaijk}.
If $\theta_{ijk}$ computed along $y(\cdot)$ is 
differentiable at $0$, then:
\begin{equation}\label{eq:der-thetaijk}
\dot{\theta}_{ijk} (0)=
C(x)
+2\alpha\, a(\|x_i-x_j\|)\,(x_j-x_i)\cdot (x_k-x_j)
+2(1-\alpha)\, a(\|x_i-x_j\|)\,(x_k-x_i)\cdot (x_k-x_j)
\end{equation}
where we used $\|x_i-x_j\|^2=\|x_i-x_k\|^2$
and
\begin{equation}\label{eq:C}
C(x)= 2    (f_i-f_j)\cdot (x_i-x_j) - 2 (f_i-f_k)\cdot (x_i-x_k).
\end{equation}
Define the stratified sets 
\begin{eqnarray*}
\widehat{\MM}_{ijk}&=&\{
x\in\MM_{ijk} : 
C(x)
+2\, a(\|x_i-x_j\|)\,(x_j-x_i)\cdot (x_k-x_j)=0,\\
&&\mbox{~~or~~} C(x)+2\, a(\|x_i-x_j\|)\,(x_k-x_i)\cdot (x_k-x_j)= 0
\},
\end{eqnarray*}
and finally
\[
\widehat{\MM}=
\left(\cup_{ijk} \widehat{\MM}_{ijk}\right)\bigcup
\left(\cup_{(i,j,k,i',j',k')\in J}
{\MM}_{ijki'j'k'}\right).
\]
Notice that $\widehat{\MM}$ is of codimension 2, and we state the following claim:\\
{\bf Claim a)} If $\tilde{x}\in \MM\setminus\widehat{\MM}$, then
there exists  $\epsilon>0$ such that  $x(t)\notin\MM$
for $t\in ]\tilde{t},\tilde{t}+\epsilon[$, and
$x\equiv y$ on $[0,\tilde{t}+\epsilon[$
for every $x(\cdot)$, $y(\cdot) \in X_{\bar{x}}$.\\
To prove Claim a), let $(i,j,k)\in I$ be the unique triplet
such that $\tilde{x}\in\MM_{ijk}$.
Assume $j\in N^t_i(\tilde x)$ or $k\in N^t_i(\tilde x)$,
otherwise the claim is obvious.
The function $\theta_{ijk}$ computed along $x(\cdot)$
satisfies
$\theta_{ijk}(\tilde{t})=0$,
is twice continuously differentiable on $[0,\tilde{t}[$ with bounded derivatives, thus we can define
$\tilde{\xi}:=\lim_{t\to\tilde{t}-}\dot{\theta}_{ijk}(t)$.\\
First assume $\tilde{\xi}>0$: then, there exists
$\delta >0$ such that both 
$\theta_{ijk}(t)<0$  and $j\in N^t_i(x(t))$ on 
$]\tilde{t}-\delta,\tilde{t}[$. 
Then, possibly restricting $\delta>0$, on  $]\tilde{t}-\delta,\tilde{t}[$ we have
$\dot{\theta}_{ijk}(t)=
C(x(t)) +2\, a(\|x_i(t)-x_j(t)\|)\,(x_j(t)-x_i(t)) \cdot (x_k(t)-x_j(t))>0$
and
$\tilde{\xi}=C(\tilde{x}) +2\, a(\|\tilde{x}_i-\tilde{x}_j\|)\,(\tilde{x}_j-
\tilde{x}_i)
\cdot (\tilde{x}_k-\tilde{x}_j)>0$.
Since $x(\cdot)$ is a Krasovsky solution,
for almost every time $\dot{\theta}_{ijk}(t)$
can be computed as in \eqref{eq:der-thetaijk}
for some $\alpha(t)\in [0,1]$.
From $(x_k-x_i)=(x_k-x_j)+(x_j-x_i)$,
we get $(x_k-x_i)\cdot (x_k-x_j)
= (x_j-x_i)\cdot (x_k-x_j)+\|x_k-x_j\|^2$,
thus $\dot{\theta}_{ijk}(t)>0$
for $t$ sufficiently close to $\tilde{t}$.
This implies that there
exists $\epsilon>0$ such that
$\theta_{ijk}>0$, $j\notin N^t_i(x(t))$
and  $k\in N^t_i(x(t))$
for $t\in ]\tilde{t},\tilde{t}+\epsilon[$.
Since $\tilde{x}\notin \widehat{\MM}$, 
by possibly reducing $\epsilon$, it holds
$x(t)\notin \widehat{\MM}$.
In particular, all sets $N^t_\ell(x(t))$ are constant
for $t\in ]\tilde{t},\tilde{t}+\epsilon[$.
Thus we conclude that Claim a) holds.\\
The case $\tilde{\xi}<0$ can be treated similarly,
while the case  $\tilde{\xi}=0$ is excluded
since $\tilde{x}\notin \widehat{\MM}_{ijk}$.

Now set:
\begin{equation}
t_{\widehat{\MM}}=
\inf \{t: \exists x(\cdot)\in X_{\bar{x}}, x(t)\in\widehat{\MM}\},
\end{equation}
Claim a) ensures  $t_{\widehat{\MM}}\leq t_U$, as uniqueness can be lost only when crossing $\widehat{\MM}$.
Therefore, if $\bar{x}\in \mathcal{A}$, then every $x(\cdot)\in X_{\bar{x}}$ is Lipschitz continuous and  coincides (at least) up to $t_{\widehat{\MM}}$. 
This implies that 
$H^{1+\epsilon}(\{x(t):t\in [0,t_{\widehat{\MM}}],x(\cdot)\in X_{\bar{x}}\})=0$ for every $\epsilon>0$, where
$H^r$ is the Hausdorff measure of dimension $r$ in $\R^{nN}$. Since $\widehat{\MM}$ is of codimension 2,
by Fubini Theorem, for $0<\epsilon<1$ we have:
\[
H^{nN}(\mathcal{A})\leq \int_{\widehat{\MM}} 
\left(H^{1+\epsilon}(\{x(t):t\in [0,t_{\widehat{\MM}}],x(\cdot)\in X_{\bar{x}}\})
\right)
d H^{nN-2+\epsilon}(\bar{x})=0.
\]
The measure $H^{nN}$ coincides with the Lebesgue measure on $\R^{nN}$, thus  $\mathcal{A}$ has zero Lebesgue measure.
\end{proof}

\section{Asymptotic behavior of solutions}

We now study convergence to cluster points, i.e.\ Property P3). We first need to investigate the relationships between cluster points and equilibria for Caratheodory and Krasovsky solutions to \eqref{eq:mbc} and \eqref{eq:tbc}.

\subsection{Equilibria and cluster points}

We begin by recalling that cluster points are points
$x^{\infty}=(x^\infty_1,\ldots,x^\infty_N)$, $x^\infty_i\in\R^n$, such that for every $i\in V$,  for every $j\in N_i(x^{\infty})$ it holds $x^\infty_i=x^\infty_j$.
It is easy to prove that cluster points are Caratheodory equilibria for both \eqref{eq:mbc} and \eqref{eq:tbc}. One can ask whether all equilibria are cluster points: here, the metric and topological models are completely different. 
For the metric bounded confidence model, all Krasovsky equilibria are indeed cluster points (see \cite[Prop.~7.1]{piccoli_rossi_2020}). Instead, for the topological bounded confidence model, there exist equilibria that are not cluster points, in the following cases:
\begin{itemize}
    \item for $\kappa>1$, for any kind of solutions, see Example \ref{ex:C-eq-no-cluster};
    \item for Krasovsky solutions even with $\kappa=1$, see Example \ref{ex:K-eq-no-cluster-k=1}.
\end{itemize}
Instead, Caratheodory equilibria for $\kappa=1$ are all clusters, as proved in Proposition~\ref{p-eq-top-k=1}.

\begin{example}[Non-convergence to clusters, topological $\kappa \ge2$]
\label{ex:C-eq-no-cluster}
Let $N=7, \  n=1, \  \kappa=2$, $a\equiv 1$  and consider the point $\overline x$ with
$\overline  x_2=\overline x_4=\overline x_5=0$, 
$\overline x_1=\frac{1}{2}$, 
$\overline x_{3}=\overline x_{6}=\overline x_{7}=1$.
It can be easily computed that $f^t(\overline x)=0$ and therefore $\overline x$ is an equilibrium point with respect to classical, Caratheodory and Krasovsky solution. Moreover, it is not a cluster point as there is just   $1<\kappa=2$ index with value $\frac{1}{2}$. We  remark that $\overline x$ is {\em not} locally attractive with respect to all types of solutions.
\end{example}
As equilibria correspond to constant solutions, this example also shows that solutions of the topological bounded confidence model do not satisfy property P3), in general.

In the previous example we fixed $\kappa=2$. For $\kappa=1$, we will show that all classical and Caratheodory equilibria are cluster points, as stated by the following proposition.

\begin{prop}[Caratheodory equilibria, topological $\kappa=1$]\label{p-eq-top-k=1}
If $\kappa=1$, Caratheodory equilibria of \eqref{eq:tbc} are cluster points.  
\end{prop}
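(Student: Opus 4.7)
The plan is to reduce the equilibrium condition directly to the cluster condition, exploiting the fact that $\kappa=1$ makes each vector field component a single summand. Recall from the remark after Definition~\ref{def:ODE-sol} that $\bar x$ is a Caratheodory equilibrium of \eqref{eq:tbc} if and only if $f^t(\bar x)=0$. With $\kappa=1$, the tie-breaking rule on the lexicographic order guarantees $N_i^t(\bar x)=\{\Gamma(i)\}$ for a unique index $\Gamma(i)$, so that
\[
f^t_i(\bar x)=a(\|\bar x_{\Gamma(i)}-\bar x_i\|)\,(\bar x_{\Gamma(i)}-\bar x_i).
\]

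The key step is then to invoke the standing hypothesis $a(r)>0$ for $r>0$. Suppose for contradiction that $\bar x_{\Gamma(i)}\neq \bar x_i$ for some $i$. Then $\|\bar x_{\Gamma(i)}-\bar x_i\|>0$, which gives $a(\|\bar x_{\Gamma(i)}-\bar x_i\|)>0$, and the product above is a nonzero vector of $\R^n$. This contradicts $f^t_i(\bar x)=0$. Hence $\bar x_{\Gamma(i)}=\bar x_i$ for every $i\in V$.

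Finally, I recall the definition of a cluster point in P3): $\bar x$ is a cluster point of \eqref{eq:tbc} iff for every $i\in V$ and every $j\in N_i^t(\bar x)$ one has $\bar x_j=\bar x_i$. Since here $N_i^t(\bar x)=\{\Gamma(i)\}$ and we have just shown $\bar x_{\Gamma(i)}=\bar x_i$, the equilibrium $\bar x$ is a cluster point. There is really no obstacle in this argument: the statement collapses because, when $\kappa=1$, the sum defining $f_i^t$ contains exactly one term, so vanishing of that term and the cluster condition literally coincide. Note that the analogous argument fails for $\kappa\ge 2$ (as illustrated by Example~\ref{ex:C-eq-no-cluster}), since several nonzero summands can then cancel out without any of the neighbours coinciding with $\bar x_i$; and it fails for Krasovsky equilibria with $\kappa=1$ because the set-valued map allows convex combinations of limiting vectors, which can vanish even when no single term does (Example~\ref{ex:K-eq-no-cluster-k=1}).
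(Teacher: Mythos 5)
Your proof is correct and follows essentially the same route as the paper's: both reduce the equilibrium condition $f^t(\bar x)=0$ to the single-summand form $f^t_i(\bar x)=a(\|\bar x_{\Gamma(i)}-\bar x_i\|)(\bar x_{\Gamma(i)}-\bar x_i)$ and conclude from $a(r)>0$ for $r>0$ that each component vanishes iff $\bar x_{\Gamma(i)}=\bar x_i$. The closing remarks on why the argument breaks for $\kappa\ge 2$ and for Krasovsky equilibria are accurate but not needed for the statement itself.
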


\begin{proof}
As $\kappa=1$, cluster points are such that agents can be divided in groups of at least two agents with the same value. The $i$-th component of the vector field writes  $f^t_i(x)=a(\| x_{\Gamma (i)}- x_i\|) (x_{\Gamma (i)}- x_i)$, where  $\Gamma (i)$  is the (state dependent) neighbor of $i$.  Note that $f^t_i(x)$ is null if and only if $x_{\Gamma (i)}= x_i$, as $a(r)>0$ for $r>0$. Then,  $\overline x_i=\overline x_{\Gamma (i)} $ for all $i\in V$ and $\overline x$ is a cluster point.
\end{proof}

Krasovsky equilibria which are not cluster points appear even if $\kappa=1$, as shown by the following example. Their existence implies that, even if $\kappa=1$, Krasovsky solutions do not necessarily converge to cluster points.

\begin{example}[Non-convergence of Krasovsky to clusters, topological $k=1$]\label{ex:K-eq-no-cluster-k=1}
We consider \eqref{eq:tbc} with $N=5$, $n=1$, $\kappa=1$, $a\equiv 1$  and the initial condition
$\overline x=(-1,1,0,1,-1)$. Observe that
$\overline x$ is a discontinuity point of the vector field $f^t$. Among the limit values of the vector field $f^t$ at $\overline x$ there are $(0,0,-1,0,0)$ and $(0,0,1,0,0)$.
Then $0\in Kf^t(\overline x) $ and $\overline x$ is a Krasovsky equilibrium which is not a cluster point. 
\end{example}

\subsection{P3) Convergence to cluster points}

The following proposition summarizes our results about convergence to cluster points. The result is the best possible one in terms of convergence to clusters, given the above counterexamples.

\begin{prop}[Convergence to cluster points] \label{p-Fclust}
\begin{itemize}
    \item[(i)] {\bf Metric bounded confidence.} For any  Krasovsky solution of \eqref{eq:mbc}, Property P3)  holds.
    \item[(ii)] {\bf Topological bounded confidence.} If $\kappa=1$, for any  Caratheodory solution of \eqref{eq:mbc}, Property P3)  holds.
    \end{itemize}
\end{prop}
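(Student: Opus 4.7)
For the metric case with Krasovsky solutions, my plan follows the Lyapunov/LaSalle argument already developed in \cite{piccoli_rossi_2020}. Using Proposition~\ref{prop:metric-average} (conservation of $x_{\mathrm{ave}}$) and Proposition~\ref{p-contractive} (compact orbit), I would take
$$V(x):=\tfrac12\sum_{i=1}^{N}\|x_i-x_{\mathrm{ave}}\|^2.$$
Along a Krasovsky solution, at a.e.\ $t$, $\dot x(t)$ admits a convex-combination representation in $K(x(t))$ whose weights $\beta_{ij}=\beta_{ji}$ inherit the symmetry $j\in N_i^m(x)\Leftrightarrow i\in N_j^m(x)$. A telescoping pairing then yields
$$\dot V(t)\le -\sum_{i<j}\beta_{ij}(t)\,a(\|x_i-x_j\|)\,\|x_i-x_j\|^2 \le 0,$$
so $V$ is monotone and $\dot V\in L^1([0,+\infty))$. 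A LaSalle-type theorem for differential inclusions confines every $\omega$-limit point to the Krasovsky equilibrium set, which coincides with the set of cluster points (cited in the preceding subsection). Since distinct cluster values are separated by at least $1$, subsequential convergence upgrades to full convergence of the trajectory to a single cluster point.

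\textbf{Plan for (ii).} For the topological BC with $\kappa=1$ and Caratheodory solutions, I would exploit the structural Proposition~\ref{p-topology-case-k=1}: at every time $t$, $G(x(t))$ is a directed pseudo-forest each of whose connected components contains a unique $2$-cycle $(i^\star,j^\star)$ with $\Gamma(i^\star)=j^\star$, $\Gamma(j^\star)=i^\star$. On every open interval on which $\Gamma$ is constant --- and these intervals form a locally finite partition of $[0,+\infty)$ by the construction of Caratheodory solutions in Section~\ref{sect:solutions} --- direct computation gives
$$\frac{d}{dt}\tfrac12\|x_{i^\star}-x_{j^\star}\|^2 = -2\,a(\|x_{i^\star}-x_{j^\star}\|)\,\|x_{i^\star}-x_{j^\star}\|^2,$$
so the pair distance strictly decays while the midpoint $\tfrac12(x_{i^\star}+x_{j^\star})$ is conserved on the interval. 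Once this distance falls strictly below every other inter-agent distance within the component, the $2$-cycle is locked in: no future switch can dissolve it, and $(x_{i^\star},x_{j^\star})$ contract to a common limit. Convergence then propagates outward by induction on the depth $\rho(k)\ge 0$ of an agent $k$ inside its pseudo-forest (depth $0$ being the $2$-cycle): granted $x_{\Gamma(k)}(t)\to x^\infty$, the cascade ODE $\dot x_k=a(\|x_{\Gamma(k)}-x_k\|)(x_{\Gamma(k)}-x_k)$ together with positivity of $a$ on $(0,+\infty)$ and boundedness of the orbit forces $x_k\to x^\infty$. Hence every agent in a connected component converges to its cluster point.

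\textbf{Main obstacle.} The most delicate step is controlling the structural switches of $\Gamma$ in part~(ii): one must justify that the $2$-cycle of each connected component eventually becomes persistent, so that the piecewise Lyapunov decay can be concatenated into global convergence, and that the cascade argument is not spoiled by agents outside the $2$-cycle re-routing infinitely often. In part~(i) the analogous subtlety is that the Krasovsky convex combination must preserve the symmetry required for the telescoping cancellation; this is standard and follows from the symmetry of the borderline condition $\|x_i-x_j\|=1$.
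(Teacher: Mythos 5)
For part (i) your plan is sound and takes a mildly different route from the paper's: you use the variance $\tfrac12\sum_i\|x_i-x_{\mathrm{ave}}\|^2$ (exploiting average preservation and the symmetry of the Krasovsky weights $\beta_{ij}$), whereas the paper, following \cite{piccoli_rossi_2020}, uses the interaction potential $V(x)=\sum_{i,j\neq i}\Phi_{ij}(\|x_i-x_j\|)$ and a gradient-flow structure. Both dissipation identities locate the $\omega$-limit set inside the set of cluster points. Be aware, however, that your closing sentence ``subsequential convergence upgrades to full convergence'' is exactly the part the paper describes as an ad-hoc convergence argument: the cluster set is a finite union of closed continua, so connectedness of the $\omega$-limit set pins down the partition into clusters but not a single limit configuration, and clusters sitting at distance exactly $1$ may still interact in the Krasovsky sense (Example~\ref{ex1}). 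This step must be spelled out, though it is the same step the paper delegates to the cited reference.

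Part (ii) has a genuine gap, and it is precisely the one the paper warns against before introducing its own argument. Your proof rests on the $2$-cycle of each weakly connected component eventually ``locking in'' and on the pseudo-forest structure freezing so that an induction on depth can be run. But the paper exhibits (the example on merging components, with $\bar x=(-1,0,1,1)$) a Caratheodory solution along which two components present at $t=0$ merge instantly for $t>0$; components can split and merge along the evolution, so the identity of the $2$-cycle and the depth $\rho(k)$ need not stabilize, and the claim ``once this distance falls strictly below every other inter-agent distance within the component, the $2$-cycle is locked in'' is unsubstantiated: the component itself changes, and an outside agent can approach $i^\star$ faster than $\|x_{i^\star}-x_{j^\star}\|$ decays. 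The assertion that the switching times of $\Gamma$ form a locally finite partition is also unjustified, since the construction in Section~\ref{sect:solutions} produces one particular Caratheodory solution with countably many switching intervals, while the proposition concerns every Caratheodory solution, whose switching set could a priori accumulate. The paper circumvents all of this by abandoning the graph-topological route: it introduces the nonsmooth Lyapunov function $W(x)=\sum_i\min_{j\neq i}I(\|x_i-x_j\|)$, proves via Danskin's theorem that each $W_i$ is non-increasing along Caratheodory solutions with $\kappa=1$, and then runs a delicate $\omega$-limit analysis (Proposition~\ref{p-Tclust}) that excludes, by a counting and alignment argument, limit configurations in which agents form equally spaced collinear chains. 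Without a replacement for the lock-in step, your plan for (ii) does not close.
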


The proof of (i) is given in \cite[Prop.~7.1]{piccoli_rossi_2020} and = is based on the observation that \eqref{eq:mbc} can be written as a gradient flow as follows. Define 
$$\Phi_{ij}(r)=\begin{cases}
\int_0^r a(s)s\,ds&\mbox{~~for~}r<1\\
\int_0^1 a(s)s\,ds&\mbox{~~for~}r\geq 1
\end{cases}
$$
and observe that, if $\|x_i-x_j\|\neq 1$, for every $i\not= j$, then
$$\dot x_i=-\sum_{j\neq i} \nabla \Phi_{ij}(\|x_i-x_j\|).$$
This suggests to define the candidate Lyapunov function
$$V(x)=\sum_{i,j\neq i} \Phi_{ij}(\|x_i-x_j\|),$$ 
which satisfies $\dot V(x(t))\leq 0$ for a.e.\ time and allows (despite being nonsmooth and non-proper) to establish an ad-hoc convergence argument.

The proof of (ii) requires a slightly different reasoning. The special case of (piecewise) classical solutions in dimension $n=1$ was proved in \cite{ceragioli2020modeling} by exploiting the special structure of its induced graph $G(x)$, which we describe next. 

\begin{prop}[Directed pseudoforest]\label{p-topology-case-k=1}
If $\kappa=1$, then for all $n$, for  every $x\in\real^{nN}$, the interaction graph $G(x)$ of \eqref{eq:tbc} is the union of weakly connected components, such that each component contains exactly one circuit of length 2 and the two nodes of the circuit can be reached from all nodes of the component.
\end{prop}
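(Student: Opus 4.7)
The plan is to exploit the fact that with $\kappa=1$ the interaction graph $G(x)$ is a \emph{functional graph}: each vertex $i$ has exactly one outgoing edge, pointing to its unique nearest neighbor $\Gamma(i)\in V\setminus\{i\}$ (with ties broken by lowest index). Two structural features of such graphs immediately give most of the statement. First, from any vertex $i$, the forward orbit $i,\Gamma(i),\Gamma^2(i),\ldots$ must revisit some vertex by finiteness, producing a directed cycle that $i$ reaches. Second, within a single weakly connected component such a cycle is unique: a standard edge-count (a weakly connected subgraph with out-degree $1$ everywhere has as many edges as vertices, so its underlying undirected graph has exactly one cycle) combined with the fact that every vertex's forward orbit ends in some cycle forces all vertices of the component to land on the same directed cycle. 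After this, the only nontrivial point left is that every such cycle has length exactly $2$.

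Since self-loops are excluded by $\Gamma(i)\neq i$, it suffices to rule out cycles of length $k\geq 3$. I would argue by contradiction: assume a cycle $i_1\to i_2\to\cdots\to i_k\to i_1$ with $k\geq 3$, and set $d_j:=\|x_{i_j}-x_{i_{j+1}}\|$ (indices mod $k$). Because $i_{j+1}=\Gamma(i_j)$ is a minimizer of $\|x_{i_j}-x_\ell\|$ over $\ell\neq i_j$, comparing with $\ell=i_{j-1}$ gives the cyclic chain of inequalities $d_j\leq d_{j-1}$ for all $j$. Summing these cyclically yields equality, hence $d_1=d_2=\cdots=d_k=:d$.

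With all $d_j$ equal, the tie-breaking rule becomes decisive: at each $i_j$, both $i_{j-1}$ and $i_{j+1}$ attain the minimum distance $d$, so the chosen neighbor $i_{j+1}$ must carry the strictly smaller index, giving $i_{j+1}<i_{j-1}$ for every $j\in\{1,\ldots,k\}$ (indices mod $k$). Iterating this relation twice produces $i_{j+2}<i_j$ for all $j$. When $k$ is even, the step-$2$ orbit on even indices closes after $k/2$ steps, yielding the impossible chain $i_2>i_4>\cdots>i_k>i_2$; when $k$ is odd, $\gcd(2,k)=1$, so the step-$2$ orbit visits every index exactly once before returning, producing an analogous strictly decreasing cyclic chain through all $k$ indices. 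Either way, one obtains $i_1<i_1$, a contradiction. Hence $k=2$, and combined with the functional-graph structure above, each weakly connected component contains exactly one circuit, of length $2$, reachable from all its nodes.

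I expect the main obstacle to be a careful bookkeeping of the tie-breaking inequalities: it is essential that the tie-break uses \emph{strict} lowest-index priority, otherwise the cyclic chain of strict inequalities among indices could not be closed into a contradiction. The rest of the argument (functional-graph structure, reachability of the cycle) is standard once the cycle-length bound is in hand.
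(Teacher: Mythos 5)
Your proof is correct and follows essentially the same route as the paper's: the functional-graph/edge-count argument giving one circuit per weakly connected component reachable from every node, the cyclic chain of nearest-neighbour inequalities forcing all distances along a putative cycle of length $\geq 3$ to coincide, and the lowest-index tie-break to reach a contradiction. The only (cosmetic) difference is how the tie-break contradiction is closed: the paper anchors at the cycle node of smallest index, whereas you chain the strict inequalities $i_{j+1}<i_{j-1}$ around the cycle and split by parity --- both work, and your version is written more carefully than the paper's rather terse final step.
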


\begin{proof}  
Let $ x$ be fixed and consider a connected component of $G(x)$, called $G'$. We first prove that $G'$ has exactly one circuit of length $2$. 
Let $M$ be the number of nodes of the connected component. As any node has exactly one out-edge, the number of edges of the component is exactly $M$, then $G'$  contains one circuit (this kind of graph is referred to as directed pseudoforest). Furthermore, we can observe that the nodes of the circuit are reachable from any node in $G'$. As any node has an outgoing edge, starting from any node there exists an infinite  walk. As the number of nodes is finite, it must contain  a circuit. This means that the walk contains  the nodes of the circuit. 

We now prove that any circuit cannot have length greater than $2$. Assume by contradiction that there is a circuit with length $p>2$. Let $i_1,..., i_p$ be its nodes and $i_1$ be the smallest index.
Thanks to the definition of neighbour, it must hold
$$
\|x_{i_1} -x_{i_2}\| \leq \|x_{i_1} -x_{i_2}\|\leq ...\leq \|x_{i_1} -x_{i_p}\|.
$$
If 
$
\|x_{i_1} -x_{i_2}\| < \|x_{i_1}   -x_{i_p}\|
$ then $i_p$ is the neighbour of $i_2$ instead of $i_1$,   contradiction.
Then it must hold
$\|x_{i_1} -x_{i_2}\| = \|x_{i_1} -x_{i_2}\|= ...= \|x_{i_1} -x_{i_p}\|
$. In this case 
$i_1$ should be the neighbour of all nodes $i_3, i_4,..., i_{p-1}$, as it is the smallest index.
Finally $G'$ has exactly one circuit: indeed, to connect two  circuits there should be a node with out-degree at least $2$.
\end{proof}

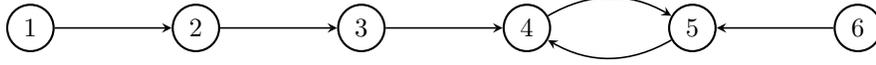
\begin{figure}
\begin{center}
    \begin{tikzpicture}[
            > = stealth, 
            shorten > = 0pt, 
            auto,
            node distance = 22mm, 
            semithick 
        ]

        \tikzstyle{every state}=[
            draw = black,
            thick,
            fill = white,
            minimum size = 4mm
        ]

        \node[state] (1) {$1$};
        \node[state] [right of=1] (2) {$2$};
        \node[state] [right of=2] (3) {$3$};                
        \node[state] [right of=3] (4) {$4$};
        \node[state] [right of=4] (5) {$5$};        
        \node[state] [right of=5] (6) {$6$};                
        
        \path[->] (1) edge (2);
        \path[->] (2) edge (3);
        \path[->] (3) edge (4);
        \path[->, bend left] (4) edge (5);   
        \path[->, bend left] (5) edge (4);
        \path[->] (6) edge (5);
        
        \end{tikzpicture}
        \end{center}
\caption{Example of weakly connected component of graph $G(\overline x)$, where $N=6$, $n=1$, $\kappa=1$,  $\overline x=(0, 10, 19, 27, 28, 30) $.}\label{fig:weak-graph}
\end{figure}

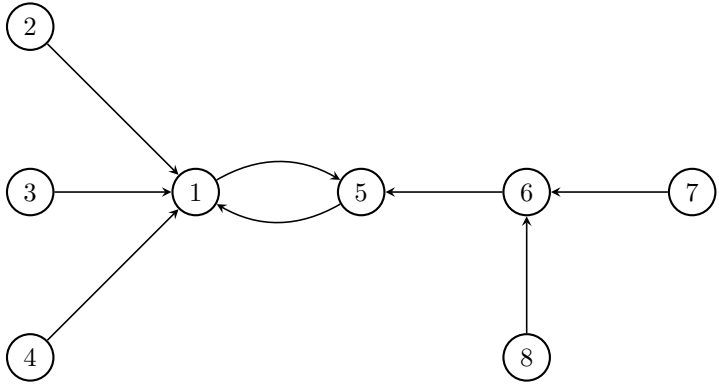
\begin{figure}
\begin{center}
    \begin{tikzpicture}[
            > = stealth, 
            shorten > = 0pt, 
            auto,
            node distance = 22mm, 
            semithick 
        ]

        \tikzstyle{every state}=[
            draw = black,
            thick,
            fill = white,
            minimum size = 4mm
        ]

        \node[state] (1) {$2$};
        \node[state] [below of=1] (2) {$3$};
        \node[state] [below of=2] (3) {$4$};                
        \node[state] [right of=2] (4) {$1$};
        \node[state] [right of=4] (5) {$5$};        
        \node[state] [right of=5] (6) {$6$};           \node[state] [right of=6] (7) {$7$};
        \node[state] [below of=6] (8) {$8$};
        
        \path[->] (1) edge (4);
        \path[->] (2) edge (4);
        \path[->] (3) edge (4);
        \path[->, bend left] (4) edge (5);   
        \path[->, bend left] (5) edge (4);
        \path[->] (6) edge (5);
        \path[->] (7) edge (6);
        \path[->] (8) edge (6);
        
        \end{tikzpicture}
        \end{center}
\caption{Example of weakly connected component of graph $G(\overline x)$ where $N=8$, $n=2$, $\kappa=1$,  $\overline x=((0,0) (0,1),(-1,0), (0,-1), (1/2,0),(1,0),(1,1),(1,-1)) $.}\label{fig:weak-graph-2}
\end{figure}

An interaction graph with the structure of $G(x)$, if kept static, would guarantee convergence to consensus for each connected component and, therefore, convergence to a cluster point. However, the graph $G(x(t))$ evolves with time in such a way that connected components can split and distinct connected components can merge. The latter phenomenon is illustrated in the following example.

\begin{example}[Merging components in Caratheodory solutions]
Let $N=4, \, n=1, \,  \kappa=1,\ a\equiv 1$ and consider the initial condition  $\overline{x}=(-1,0,1,1)$. Consider the Caratheodory solution $x(t)=(1-t e^{-t} -2e^{-t},1-e^{-t},1,1)$. Note that $x(0)=\overline x $ and $x(t)$ that satisfies \eqref{eq:tbc} at all $t>0$ but not at $t=0$. The graph $G(x(0))$ has two connected components whose vertices are $\{1,2\}$ and $\{3,4\}$ whereas $G(x(t))$ is connected for all $t>0$.  
\end{example}

This counterexample prevents leveraging the topology of $G(x)$ to prove (ii) for Caratheodory solutions. We therefore resort to a Lyapunov-like argument, which is partly inspired by the one  in \cite{piccoli_rossi_2020} for metric interactions, but will be valid for topological interactions in the case of $\kappa=1$ only.

We introduce the integral function
$$I(r):=\int_0^r a(s)s\,ds$$ 
and write the candidate Lyapunov function
\begin{equation}\label{e-W}
W(x):=\sum_{i,j\in N_i^t(x)} I(\|x_j-x_i\|).
\end{equation}
One might hope to write \eqref{eq:tbc} as 
    $\dot x=-\nabla W (x)$,
like in the metric case. This is false, as one can easily observe  that this expression entails interactions  that are symmetric, while this is not the case for \eqref{eq:tbc}.



 

We will anyway be able to prove that $W(x)$ is a Lyapunov function for solutions to \eqref{eq:tbc}. This is only the case for $\kappa=1$ and for Caratheodory solutions, but the proof is quite different from the case of metric bounded-confidence \eqref{eq:mbc}, again due to the asymmetry of the interactions. Instead, $W(x)$ is not a Lyapunov function, neither for Caratheodory solutions with $\kappa>1$ nor for Krasovsky solutions with $\kappa\geq 1$, as shown by Examples \ref{ex:noLyap-1} and \ref{ex:noLyap-2} below.

\begin{prop}[$W$ is Lyapunov] Let $\kappa=1$. Then, the function $W(x(t))$ is continuous and non-increasing for Caratheodory solutions.
\end{prop}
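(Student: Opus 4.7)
I would split the proof into (a) continuity of $W$ as a function of $x$, and (b) establishing $\tfrac{d}{dt}W(x(t))\le 0$ at almost every $t$, from which monotonicity in $t$ follows by absolute continuity.

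For \emph{continuity}, I write $W(x)=\sum_{i\in V} I(\|x_{\Gamma(i)}-x_i\|)$, where $\Gamma(i)$ denotes the unique element of $N_i^t(x)$. The map $x\mapsto\Gamma(i)$ is locally constant on the open stratum where $i$ has a strict nearest neighbor; across a stratum boundary it can jump between two indices $j,k$, but exactly at such points $\|x_j-x_i\|=\|x_k-x_i\|$, so each summand, and therefore $W$, is continuous. Moreover $W$ is Lipschitz on compact sets, so $t\mapsto W(x(t))$ is absolutely continuous along the bounded trajectory $x(\cdot)$ (boundedness coming from Proposition~\ref{p-contractive}).

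For \emph{monotonicity}, I compute $\dot W$ at the full-measure set of times where $\dot x(t)$ exists, the ODE is satisfied, and $G(x(t))$ is locally constant in $t$. Writing $e_i:=x_{\Gamma(i)}-x_i$ and $a_i:=a(\|e_i\|)$, the identity $\dot x_i=a_i e_i$ together with $I'(r)=a(r)r$ gives
\[
\frac{d}{dt}W(x(t))=\sum_{i\in V}\bigl(a_i\,a_{\Gamma(i)}\,e_i\cdot e_{\Gamma(i)}-a_i^2\|e_i\|^2\bigr).
\]
I then split this sum along the weakly connected components of $G(x(t))$ furnished by Proposition~\ref{p-topology-case-k=1}. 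For the unique $2$-cycle $(p,q)$ inside each component, $\Gamma(p)=q$, $\Gamma(q)=p$, so $e_q=-e_p$ and $a_p=a_q$; the joint contribution of $p$ and $q$ collapses to $-4\,a_p^2\|e_p\|^2\le 0$. For a non-cycle index $i$ with $\Gamma(i)=j$, necessarily $\Gamma(j)\ne i$ (otherwise $(i,j)$ would be a $2$-cycle containing $i$), hence by minimality of $j$'s nearest-neighbor distance,
\[
\|e_j\|=\|x_{\Gamma(j)}-x_j\|\le\|x_i-x_j\|=\|e_i\|.
\]
Monotonicity of $a$ then yields $a_j\le a_i$, so $a_j\|e_j\|\le a_i\|e_i\|$, and Cauchy--Schwarz gives
\[
a_i a_j\,e_i\cdot e_j\le a_i a_j\|e_i\|\|e_j\|\le a_i^2\|e_i\|^2,
\]
so the $i$-th summand is $\le 0$ as well. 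Summing over $V$ gives $\dot W(x(t))\le 0$ a.e.

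The \emph{main obstacle} I foresee is justifying that the ``good'' times (with $G(x(t))$ locally constant) form a full-measure set: a Caratheodory trajectory could in principle slide along a discontinuity stratum for a positive-measure interval. In that event the tie-breaking convention still singles out a definite $\Gamma$, the equal summands $I(\|x_j-x_i\|)=I(\|x_k-x_i\|)$ carry identical time derivatives, and the pointwise computation above goes through unchanged using the neighbor chosen by the tie-break, preserving the sign analysis above.
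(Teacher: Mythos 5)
Your proof is correct, and its analytic core is the same as the paper's: the sign of each agent's contribution is obtained from the nearest-neighbour inequality $\|x_{\Gamma(j)}-x_j\|\le\|x_i-x_j\|$, the monotonicity of $a$, and Cauchy--Schwarz, exactly as in the paper's estimate \eqref{e-DWi}. The packaging differs in two ways. First, the paper writes $W=\sum_i W_i$ with $W_i(x)=\min_{j\neq i}I(\|x_i-x_j\|)$ and invokes Danskin's theorem, so that the derivative of $W_i$ along the flow is automatically a \emph{min} over the tied indices and can be bounded by the branch corresponding to the actual neighbour; this disposes of your ``main obstacle'' (sliding along a tie stratum for a positive-measure set of times) in one stroke, whereas your remark that the tied summands ``carry identical time derivatives'' really needs the observation that two absolutely continuous functions agreeing on a positive-measure set have equal derivatives at almost every density point of that set --- true, but worth saying. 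Second, you route the sign analysis through the pseudo-forest structure of Proposition~\ref{p-topology-case-k=1}, separating the $2$-cycle from the tree part of each component; the paper does not, and indeed this decomposition is superfluous: your non-cycle estimate $a_ia_j\,e_i\cdot e_j\le a_i^2\|e_i\|^2$ applies verbatim to the cycle nodes as well (there $e_j=-e_i$ is just the extreme case). On the plus side, your explicit appeal to local Lipschitz continuity of $W$ to get absolute continuity of $t\mapsto W(x(t))$ is a cleaner justification of the final ``nonpositive a.e.\ derivative implies non-increasing'' step than the paper's brief remark that $W_i(x(t))$ is continuous.
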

\begin{proof}
The proof is based on rewriting $W(x)=\sum_{i=1}^N W_i(x)$ where 
\begin{equation}\label{eq:W_i}
W_i(x)=\min_{j\neq i} I(\|x_i-x_j\|).     
\end{equation}
It is clear that both $I(r)$ and $x(t)$ are continuous. Then, both all  $W_i(x(t))$ and their sum $W(x(t))$ are continuous too.
The rest of the proof is based on Danskin theorem\footnote{In Danskin notation, we have $F=F(x,j)=I(\|x_i-x_j\|)$ maximized with respect to  $j\in V\setminus\{i\}$. } \cite{danskin} for $W_i(x)$. Similarly to the proof of Proposition~\ref{p-contractive}, even though  $W_i(x)$ can be non-differentiable, it admits directional derivative with respect to any direction. We apply it to our function, denoting the direction of displacement with $h=(h_1,\ldots,h_N)$, where each $h_k$ is the $n$-dimensional direction of displacement of the position of the agent $k$. By applying Danskin formula, the directional derivative $D_h$ along $h$ is given by
\begin{eqnarray*}
D_h W_i(x)&=&\min_{j\in A_i(x)}
\sum_{k=1}^N h_k \cdot \nabla_{x_k} I(\|x_i-x_j\|)=\min_{j\in A_i(x)}
\sum_{k\in\{i,j\}} h_k \cdot \nabla_{x_k} I(\|x_i-x_j\|)\\
&=&\min_{j\in A_i(x)}\left(h_i \cdot a(\|x_i-x_j\|)(x_i-x_j)- h_j \cdot a(\|x_i-x_k\|)(x_i-x_j)\right)\\
&=&\min_{j\in A_i(x)}a(\|x_i-x_j\|)(h_i-h_j)\cdot(x_i-x_j),
\end{eqnarray*}
where $A_i(x)$ is the set of indexes $j\neq i$ realizing $\min I(\|x_i-x_j\|)$.

We now apply this formula to compute the time derivative $\dot W_i(x(t))$, whenever it exists. Following the same computations as for \eqref{e-derdir}, we have $\dot W_i(x(t))=D_{\dot x(t)}W_i(x(t))$. 
Since the time derivative $\dot x(t)$ exists for almost every time $t\in(0,T)$, this holds for $\dot W_i(x(t))$ too. We compute this derivative, by restricting ourselves to Caratheodory solutions, that satisfy $\dot x_i=a(\|x_i-x_k\|)(x_k-x_i)$ for almost every time, with $k\in N_i^t(x)$. Denote with $L:=\|x_i-x_k\|$ and with $l$ the unique element $l\in N_k^t(x)$. Observe that $l\in N_k^t(x)$ implies $\|x_l-x_k\|\leq \|x_i-x_k\|=L$, that in turn implies $a(\|x_l-x_k\|)\leq a(\|x_i-x_k\|)=a(L)$ and $(x_l-x_k)\cdot(x_i-x_k)\geq -L^2$.  
Since $k\in A_i(x)$, and using previous estimates, it holds
\begin{eqnarray}
D_{\dot x(t)}W_i(x(t))&=&\min_{j\in A_i(x)}a(\|x_i-x_j\|)(\dot x_i-\dot x_j)\cdot(x_i-x_j)\leq a(L)(\dot x_i-\dot x_k)\cdot(x_i-x_k)=\label{e-DWi}\\
&&a(L)\left(a(\|x_i-x_k\|)(x_k-x_i)\cdot (x_i-x_k)- a(\|x_k-x_l\|)(x_l-x_k)\cdot(x_i-x_k)\right)\leq\nonumber\\
&& -a(L)^2L^2+a(L)^2L^2=0.\nonumber
\end{eqnarray}
Since $W_i(x(t))$ is continuous, this implies that each $W_i(x(t))$ is non-increasing.

Passing to $W(x(t))$, that is a finite sum of continuous and non-increasing functions, the proof follows.
\end{proof}

\begin{example}[$W(x(t))$ increasing if $\kappa>1$]\label{ex:noLyap-1} We now prove that $W(x)$ given in \eqref{e-W} is not a Lyapunov function for \eqref{eq:tbc} in the case $\kappa>1$ for Caratheodory solutions (hence for Krasovsky solutions too). Consider the following initial configuration of $N=8$ agents on the real line with $\kappa=2$ and $a(r)\equiv 1$: choose $\bar x=(-9,-9,-9,-2,2,9,9,9)$ and observe that the unique solution of \eqref{eq:tbc} in the Krasovsky sense (that is even classical and Caratheodory) is given by
\begin{equation}
    \begin{cases}
         \dot x_1=\dot x_2=\dot x_3=\dot x_6=\dot x_7=\dot x_8=0\\
         \dot x_4=(x_5-x_4)+(x_1-x_4)\\
         \dot x_5=(x_4-x_5)+(x_6-x_5).
    \end{cases}
\end{equation}
By symmetries, it holds $x_4(t)=-x_5(t)$, thus $\dot x_5=9-3 x_5$. It then holds $x_5(t)=3-e^{-3t}$.
Notice that the topology does not change and the  solution converges to the equilibrium point $x^{\infty}=(-9,-9,-9,-3,3,9,9,9)$ which is not a cluster point. A direct computation gives
\begin{equation}
W(x(t))=\tfrac12 \left((x_1(t)-x_4(t))^2+2 (x_4(t)-x_5(t))^2+(x_5(t)-x_6(t))^2\right)=4x_5^2(t)+(9-x_5(t))^2,
\end{equation}
that satisfies $\dot W(x(t))=(10x_5(t)-18)\dot x_5(t)>0$ for all $t\in[0,+\infty)$.
\end{example}

\begin{example}[$W(x(t))$ increasing for Krasovsky solutions]\label{ex:noLyap-2} We now prove that $W(x)$ given in \eqref{e-W} is not a Lyapunov function for Krasovsky solutions with $\kappa=1$. Consider the following initial configuration of $N=5$ agents on the real line and $a(r)\equiv 1$: choose $\bar x=(-1-y_0,-1+y_0,0,1-y_0,1+y_0)$  with $y_0\in\left(0,\frac1{17}\right)$. Observe that one of the solutions of \eqref{eq:tbc} in the Krasovsky sense is 
\begin{equation*}
    \begin{cases}
         x_1(t)=-1-y(t),\qquad
         x_2(t)=-1+y(t),\qquad
         x_3(t)=0,\qquad 
         x_4(t)=1-y(t),\qquad
         x_5(t)=1+y(t),    \end{cases}
\end{equation*}
with $y(t)=\exp(-2t)y_0$. Indeed, for all $t>0$ this solution satisfies
\begin{eqnarray*}\dot x&=&\frac12 (x_2-x_1,x_1-x_2,x_4,x_5-x_4,x_4-x_5)+\frac12 (x_2-x_1,x_1-x_2,x_2,x_5-x_4,x_4-x_5)\\
&=&\left(2y(t),-2y(t),0,2y(t),-2y(t)\right).
\end{eqnarray*}
A direct computation gives
\begin{eqnarray*}
2W(x(t))&=&2(x_1(t)-x_2(t))^2+(x_3(t)-x_2(t))^2+
2(x_4(t)-x_5(t))^2=\\
&&4(2y(t))^2+(1-y(t))^2=1-2y(t)+17y(t)^2.
\end{eqnarray*} 
Its derivative is $4y(t)-17\cdot 4 y(t)^2$, that is positive for $y(t)\in\left(0,\frac1{17}\right)$. This holds whenever $y_0\in\left(0,\frac1{17}\right)$. As a consequence, $W(x(t))$ is strictly increasing.
\end{example}

We are now ready to describe the structure of the limits of Caratheodory solutions of \eqref{eq:tbc} with $\kappa=1$, that are indeed clusters.

\begin{prop}[Convergence and cluster properties, topological $\kappa=1$]\label{p-Tclust} Let $x_1(t),\ldots,x_N(t)$ be a Caratheodory solution of \eqref{eq:tbc} with $\kappa=1$. Then, the following clustering properties hold:
\begin{itemize}
\item each agent satisfies $\lim_{t\to+\infty} x_i(t)=x_i^\infty$ for some $x_i^\infty \in \R^n$;
\item for each $i\in V$ there exists at least one $j\neq i$ such that $x_i^\infty=x_j^\infty$.
\end{itemize}
 This also implies that P3) holds and  $W(x^\infty)=0$.
\end{prop}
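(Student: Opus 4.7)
The plan is to combine the Lyapunov property of $W$ just established with the rigid directed-pseudoforest structure of $G(x)$ from Proposition~\ref{p-topology-case-k=1}, via an invariance-principle type argument adapted to Caratheodory solutions.

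By Proposition~\ref{p-contractive}, the trajectory $x(t)$ is contained in the compact set $\overline{co}\{x_1(0),\ldots,x_N(0)\}$, so its $\omega$-limit set $\Omega$ is nonempty and compact. Since $W(x(t))$ is continuous, non-increasing and bounded below by zero, it converges to some $W^\infty \geq 0$, and by continuity $W\equiv W^\infty$ on $\Omega$. Moreover, $\int_0^\infty (-\dot W(x(t)))\,dt = W(x(0)) - W^\infty < \infty$ provides an integrable control on the Lyapunov derivative. The central step is to prove $W^\infty = 0$, which is equivalent to $L_i(x(t)) := \min_{j\neq i}\|x_i(t)-x_j(t)\|\to 0$ for every $i$, and then upgrade this to convergence of each $x_i(t)$.

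To obtain $L_i(x(t))\to 0$, I would sharpen the estimate \eqref{e-DWi} on a mutual nearest-neighbour pair, i.e., when $N^t_i(x)=\{j\}$ and $N^t_j(x)=\{i\}$. In that case $l=i$, the cross term in the computation collapses to $-a(L)^2L^2$ rather than being merely bounded by $+a(L)^2L^2$, so one obtains the strict estimate $\dot W_i(x(t))\leq -2a(L_i)^2L_i^2$. Proposition~\ref{p-topology-case-k=1} guarantees that every weakly connected component of $G(x(t))$ contains a $2$-cycle, hence a mutual pair, at every time. Combined with the integrability of $-\dot W$, this forces the distance along at least one pair in each component to have $L\in L^2(0,\infty)$. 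The vanishing then propagates along the directed paths of the pseudoforest to every other agent in the component: once $x_{k_i}(t)$ has a limit, the one-dimensional ODE governing the scalar $\|x_i-x_{k_i}\|$ contracts it towards zero (at rate bounded below by $a(L_i)$), forcing $L_i\to 0$ as well.

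The main obstacle is passing from $L_i(x(t))\to 0$ and $\|\dot x_i(t)\|=a(L_i)L_i \to 0$ to the existence of the limit $x_i^\infty$, since a bounded slow-moving trajectory need not converge. I would exploit the mutual-pair structure once more: while a pair $(i,j)$ remains mutual, $\dot x_i+\dot x_j=0$ and the midpoint $\tfrac12(x_i+x_j)$ is exactly conserved; between changes of the graph structure $G(x(t))$, the change of this midpoint can be controlled by the geometry at the transition time, which itself is tied to the vanishing distances $L_\ell$. Since $G$ assumes only finitely many values and the trajectory spends most of its time near cluster configurations, a telescoping argument combined with the $L^2$ bound on $L_i$ should yield a total-variation estimate $\int_0^\infty\|\dot x_i\|\,dt<\infty$, giving convergence to some $x_i^\infty$. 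The clustering property $x_i^\infty=x_{k_i}^\infty$ and $W(x^\infty)=0$ are then immediate from $L_i(x^\infty)=0$. The truly delicate point throughout is the bookkeeping across graph-topology changes: although the combinatorial possibilities are finite, controlling the cumulative effect of infinitely many switches requires that each switch contribute only a quantity controlled by $L_i$'s that are already known to be summable in the right sense.
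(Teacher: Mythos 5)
There is a genuine gap, and it sits exactly where the paper has to work hardest. Your central claim is that the vanishing of $\dot W$ forces all nearest-neighbour distances to vanish: the sharpened estimate $D_{\dot x}W_i\leq -2a(L)^2L^2$ on a mutual pair is correct, but it only controls agents that are currently in a $2$-cycle, and even there the identity of the mutual pair changes with time, so integrability of $-\dot W$ does not give an $L^2$ bound on any \emph{fixed} distance $\|x_i-x_j\|$. More seriously, the ``propagation along directed paths'' step fails: for a non-mutual agent $i$ chasing $l_i$ who chases $l_{l_i}$, the configuration in which $x_i,x_{l_i},x_{l_{l_i}}$ are collinear and equally spaced gives $\dot x_i=\dot x_{l_i}$, hence $\dot W_i=0$ with $L_i$ strictly positive and constant; nothing in your argument excludes the $\omega$-limit from consisting of such rigid chains. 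The paper's proof is built precisely to handle this: it extracts, near each $\omega$-limit point, time intervals of uniform measure on which the combinatorial data $(j_i,l_i,m_i)$ is frozen, shows the limiting Lyapunov derivatives $\mathcal{A}_i$ vanish, and then runs a case analysis (\textbf{Case $i$A} versus \textbf{Case $i$B}) in which the aligned-chain alternative is killed by a purely combinatorial argument --- an equally spaced collinear chain would have to involve infinitely many distinct agents. That finiteness argument is the key idea your proposal is missing.

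The second half of your plan (midpoint conservation for mutual pairs, telescoping across graph switches, a total-variation bound on $\dot x_i$) is the part you yourself flag as delicate, and it is not carried out; as stated it is not a proof. The paper avoids it entirely: once $W(x^*)=0$ is known for a point $x^*$ of the $\omega$-limit, it partitions the agents into equivalence classes by coincidence of limits, establishes a quantitative cluster-separation condition at a large time $t_{\bar k}$, observes that separated clusters never interact afterwards, and applies the contractivity of the support (Proposition~\ref{p-contractive}) to each cluster independently to collapse the $\omega$-limit to a single point. You should replace your total-variation bookkeeping with this separation-plus-contractivity argument, and replace the propagation step with the alignment case analysis.
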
 
\begin{proof} First recall that $x(t)$ is bounded, due to contractivity of the support proved in Proposition~\ref{p-contractive}. This implies that $a(\|x_i(t)-x_j(t)\|)$ is bounded too, as $a(r)$ is Lipschitz continuous by hypothesis. This in turn implies that both $x(t)$ and $a(\|x_i(t)-x_j(t)\|)$ are Lipschitz continuous too. Boundedness also implies that the $\omega$-limit is bounded.

Fix now any $x^*=(x_1,\ldots,x_N)$ in the $\omega$-limit of $x(t)$. By definition, there exists a sequence $t_k\to+\infty$ such that $x(t_k)\to x^*$. Fix $\eps>0$ and $K=K_\eps$ sufficiently large to have
\begin{eqnarray}
\|((x_i(t_k)-x_j(t_k))\cdot (x_l(t_k)-x_m(t_k)))-(x_i^*-x_j^*)\cdot (x_l^*-x_m^*)\|<2\eps
\end{eqnarray}
for all $i,j,l,m\in V$ and $k>K_\eps$. Since trajectories are bounded and  Lipschitz continuous, there exists a uniform $\delta>0$ such that 
\begin{eqnarray} \label{e-eps3}
\|((x_i(t_k+\tau)-x_j(t_k+\tau))\cdot (x_l(t_k+\tau)-x_m(t_k+\tau)))-(x_i^*-x_j^*)\cdot (x_l^*-x_m^*)\|<\eps
\end{eqnarray}
for all $\tau\in(-N^{3N}\delta,N^{3N}\delta)$.

Fix now $i=1$, recall \eqref{eq:W_i}
and consider the derivative 
\begin{equation}\label{e-Dbase}
    D_{\dot x(t)}W_1(x(t))=\min_{j\in A_1(x(t))}a(\|x_1(t)-x_j(t)\|)(\dot x_1(t)-\dot x_j(t))\cdot(x_1(t)-x_j(t)),
\end{equation}
whenever $\dot x(t)$ is well-defined, i.e.\ for almost every $t>0$. Since the number of nearest neighbours  of $x_1$ in $A_1(x)$ is $N-1$ at most, there exists at least one index ${j_1^k}$ such that ${j_1^k}$ is the minimizer in the right hand side of \eqref{e-Dbase} for all $\tau\in I^{1,a}_k$, where $I^{1,a}_k\subset (t_k-N^{3N}\delta, t_k+N^{3N}\delta)$ has Lebesgue measure $2N^{3N-1}\delta$. Since the number of possible ${j_1^k}$ is finite, eventually passing to a subsequence in $k$, we assume that ${j_1^k}={j_1}$ is constant. By recalling that Caratheodory solutions satisfy the  dynamics  \eqref{eq:tbc} for almost every time, it holds 
\begin{eqnarray*}\label{e-D}
    D_{\dot x(t)}W_1(x(t))&=&a(\|x_1(t)-x_{j_1}(t)\|)[a(\|x_1(t)-x_l(t)\|)(x_l(t)-x_1(t))\\
    &&~~~~~~~~~ -a(\|x_{j_1}(t)-x_m(t)\|)(x_m(t)-x_{j_1}(t))]\cdot(x_1(t)-x_{j_1}(t)),
\end{eqnarray*}
for almost every $t\in I^{1,a}_k$, where $l\in N_1^t(x(t))$ and $m\in N_{j_1}^t(x(t))$. Again, since the number of possible neighbours of $1$ in $N_1^t(x(t))$ is $N-1$ at most, then there exists $l^k_1$ such that $l^k_1\in N_1^t(x(t))$ for all $t\in I^{1,b}_k$ where $I^{1,b}_k\subset I^{1,a}_k$ has Lebesgue measure $2N^{3N-2}\delta$. By passing to a subsequence in $k$, we can assume $l_1$ constant. With a similar argument, we can find $m_1$ and $I^{1,c}_k\subset I^{1,b}_k$ with Lebesgue measure $2N^{3N-3}\delta$ such that $m_1\in N_{j_1}^t(x(t))$ for all $t\in I^{1,c}_k$.

We now  choose the index 2 and define the corresponding indexes $j_2,l_2,m_2$ and sets $I^{2,c}\subset I^{2,b}\subset I^{2,a}_k\subset I^{1,c}_k$, each with Lebesgue measure being $1/N$ of the previous one. We then move to indexes $3,4,\ldots N$, finally reaching $I_k:=I^{N,c}_k$ with Lebesgue measure $2\delta$ and such that, for each $i\in V$ there exists corresponding $j_i,l_i,m_i$ such that for all $\tau\in I_k$ the following hold:
\begin{itemize}
    \item the index $j_i$ is the minimizer in the right hand side of \eqref{e-Dbase};
    \item the index $l_i$ is the unique element of $N_i^t(x(\tau))$;
    \item the index $m_i$ is the unique element of $N_{j_i}^t(x(\tau))$.
\end{itemize} 

Fix now any $i\in V$ and the corresponding $j_i,l_i,m_i$ defined above. We now prove that it holds
\begin{eqnarray}\label{e-AA}
\mathcal{A}_i:=a(\|x_i^*-x_{j_i}^*\|)\left[a(\|x_i^*-x_{l_i}^*\|)(x_{l_i}^*-x_i^*)\cdot(x_i^*-x_{j_i}^*)-a(\|x_{{j_i}}^*-x_{m_i}^*\|)(x_{m_i}^*-x_{j_i}^*)\cdot(x_i^*-x_{j_i}^*)\right]=0.
\end{eqnarray}

By contradiction, first assume that $\mathcal{A}_i>0$: then, observe that \eqref{e-D} coupled with \eqref{e-eps3}, implies that there exists $\bar k$ such that $D_{\dot x(\tau)}W_i(x(\tau))> \mathcal{A}_i/2$ for every $\tau \in I_k$ with $k\geq \bar k$. Since the set of such $\tau$ has non-zero Lebesgue measure, this contradicts the fact that $W_i(x(t))$ is a non-increasing function.

Assume now $\mathcal{A}_i<0$ and use the same reasoning to prove that $D_{\dot x(\tau)}W_i(x(\tau))< -|\mathcal{A}_i|/2$ for every $\tau\in I_k$ with $k\geq \bar k$. Since for all times in $(t_k-N^{3N}\delta,t_k+N^{3N}\delta)$ we have $W_i(x(t))$ non-increasing, we can write
\begin{eqnarray}
W_i(x(t_k+N^{3N}\delta))&\leq&W_i(x(t_k-N^{3N}\delta))+\int_{I_k}d\tau\,D_{\dot x(\tau)}W_i(x(\tau))\leq W_i(x(t_k-N^{3N}\delta))-\delta |\mathcal{A}_i|.
\end{eqnarray}

This implies $\lim_{t_k\to +\infty} W_i(x(t_k+N^{3N}\delta))= -\infty$. This contradicts the fact that $W_i$ is bounded from below. We have now proved \eqref{e-AA}.

We now prove that \eqref{e-AA} ensures $W(x^*)=0$. For each $i\in V$, recall the definition of corresponding indexes  $j_i,l_i,m_i$ given above. For $i=1$, condition \eqref{e-AA} implies one of the following cases:
\begin{itemize}
    \item {\bf Case 1A)} the index $j_1$ satisfies $\|x_1^*-x_{j_1}^*\|=0$. This in turn implies that the only $j\in N_1^t(x^*)$ satisfies $\|x_1^*-x_j^*\|\leq \|x_1^*-x_{j_1}^*\|=0$. This in turn implies $a(\|x_1^*-x_j^*\|)=0$, i.e.\  $W_1(x^*)=0$.
    \item {\bf Case 1B)} the index $j_1$ satisfies $\|x_i^*-x_{j_1}^*\|\neq 0$.  Observe that, by construction of $j_1,l_1$, it holds $\|x_i(t)-x_{j_1}(t)\|=\|x_i(t)-x_{l_1}(t)\|$, thus by continuity it holds $\|x_i^*-x_{l_1}^*\|=\|x_i^*-x_{j_1}^*\|\neq 0$. Moreover, the definition of $j$ in \eqref{e-Dbase} implies that the following estimate holds 
    \begin{eqnarray}\label{e-split}
    &&a(\|x_1(t_k)-x_{j_1}(t_k)\|)[a(\|x_1(t_k)-x_{l_1}(t_k)\|)(x_{l_1}(t_k)-x_1(t_k))\\
    &&~~~~~~~~~~ -a(\|x_{j_1}(t_k)-x_{m_1}(t_k)\|)(x_{m_1}(t_k)-x_{j_1}(t_k))]\cdot(x_1(t_k)-x_{j_1}(t_k))\leq \nonumber\\
    &&    a(\|x_1(t_k)-x_{l_1}(t_k)\|)[a(\|x_1(t_k)-x_{l_1}(t_k)\|)(x_{l_1}(t_k)-x_1(t_k))\nonumber\\
    &&~~~~~~~~~~  -a(\|x_{l_1}(t_k)-x_{l_{l_1}}(t_k)\|)(x_{l_{l_1}}(t_k)-x_{l_1}(t_k))]\cdot(x_1(t_k)-x_{l_1}(t_k))\leq 0,\nonumber
    \end{eqnarray}
    where $l_{l_1}$ is the unique index in $N_{l_1}^t(x(t_k))$.
    The last inequality can be proved as in \eqref{e-DWi}. The left hand side of \eqref{e-split} converges to \eqref{e-AA} as $t_k\to+\infty$, thus it converges to zero. Then, the middle term converges to zero too, i.e.
    \begin{eqnarray}\label{e-allineato}
    -a(\|x_1^*-x_{l_1}^*\|)\|x_{l_1}^*-x_1^*\|^2=a(\|x_{l_1}^*-x_{l_{l_1}}^*\|)(x_{l_{l_1}}^*-x_{l_1}^*)\cdot (x_1^*-x_{l_1}^*).
    \end{eqnarray}
    Here we used the fact that $a(\|x_1^*-x_{l_1}^*\|)\neq 0$. 
    Since $\|x_1^*-x_{l_1}^*\|\geq \|x_{l_1}^*-x_{l_{l_1}}^*\|$ by construction of $l_{l_1}$ and $a$ is non-decreasing, the only possibility for \eqref{e-allineato} to hold is to have $\|x_{l_1}^*-x_{l_{l_1}}^*\|=\|x_1^*-x_{l_1}^*\|\neq 0$ and $(x_{l_{l_1}}^*-x_{l_1}^*)\cdot (x_1^*-x_{l_1}^*)=-\|x_{l_1}^*-x_1^*\|^2$, i.e.\ $x_1,x_{l_1},x_{l_{l_1}}$ being on the same line with $x_{l_1}$ as middle point. This also implies that $1,l_1,l_{l_1}$ are all different indexes. We relabel ${l_1},l_{l_1}$ as indexes $2,l_2$, for simplicity of notation.
\end{itemize}
We then apply the same idea to index $2$ (either coming from relabeling or not), and we have the following cases:
\begin{itemize}
    \item {\bf Case 2A)} The index $j_2$ satisfies $\|x_2^*-x_{j_2}^*\|=0$. Since $j_2\in A_2(x(t))$ for all $t\in I_k$ and $l_2$ is the unique element of $N_2^t(x(t))$, this implies $$\|x_2(t)-x_{l_2}(t)\|=\|x_2(t)-x_{j_2}(t)\|,$$
    thus $\|x_2^*-x_{l_2}^*\|=0$ by continuity. This implies that {\bf Case 1B)} is not compatible with {\bf Case 2A)}: indeed,  \eqref{e-allineato} implies $a(\|x_1^*-x_{l_1}^*\|)\|x_{l_1}^*-x_1^*\|^2=0$, thus $\|x_1^*-x_{l_1}^*\|=0$ and, by continuity, it holds $\|x_1^*-x_{j_1}^*\|=0$. 
    \item {\bf Case 2B)} The index $j_2$ satisfies $\|x_2^*-x_{j_2}^*\|\neq 0$. By following the reasoning of {\bf Case 1B)}, we find that $2,l_2,l_{l_2}$ are aligned, with $l_2$ being the middle point. This implies that, if both {\bf Case 1B)} and {\bf Case 2B)} hold, then $1,2,l_2,l_{l_2}$ are aligned, each with the same distance with respect to the previous one. Like in {\bf Case 1B)}, we also have that the four indexes are all distinct. This also allows to relabel $l_2,l_{l_2}$ as $3,l_3$, for simplicity of notation.
\end{itemize}

We now apply the same reasoning to all indexes $i=3,\ldots, N$ either after relabeling (due to {\bf Case $i$B}) or not. By incompatibility between {\bf Case $i$B} and {\bf Case $(i+1)$A}, we have the following structure: we first have $i$ cases, that are  {\bf Cases 1A-2A-...-$i$A}, then $N-i$ cases, that are {\bf Cases $(i+1)$B-...-$N$B}. We prove that $i=N$, by contradiction. Observe that {\bf Cases $(i+1)$B-...-$N$B} force us to have agents $i+1,\ldots,N,l_N$ aligned on the same line, each with the same distance with respect to the previous one. Since the number of agents is $N$, the agent $l_N$ is one among $1,\ldots,N$. By the alignment condition, it cannot be any of the agents $i+1,\ldots,N$, hence {\bf Case $l_N$A} holds. By incompatibility of conditions described above, {\bf Case $N$B} cannot hold. This raises a contradiction. As a consequence, for each $i\in V$ the {\bf Case $i$A} is satisfied. This means that for each $i\in V$ there exists $j\neq i$ such that $x_i^*=x_j^*$. This also implies $W_i(x^*)=0$. In particular, $x^*$ satisfies the second statement of this proposition.

We are now left to prove that the $\omega$-limit is reduced to a single point, i.e.\ that $x^*$ given above is $x^\infty$ in the first statement of this proposition. Define the following equivalence relation: $i\sim j$ if $x_i^*=x_j^*$. Observe that each class of equivalence $[i]_\sim$ is composed of at least two elements.
We have two possibilities:
\begin{itemize}
    \item There exists a single class of equivalence $[i]_\sim$. Then, for each $\eps>0$ there exists $t_k$ such that $\|x_i(t_k)-x_i^*\|=\|x_i(t_k)-x_1^*\|<\eps$. Since this holds for all indexes, then the support of the solution $x(t_k)$ is contained in $B(x_1^*,\eps)$. Since the support is non-increasing, due to Proposition~\ref{p-contractive}, then the solution $x(t)$ is contained in $B(x_1^*,\eps)$ for all $t\geq t_k$. Since this condition holds for all $\eps>0$, then $x_i(t)\to x_1^*=x_i^*$.
    \item There exist at least two classes of equivalence $[i]_\sim\neq [j]_\sim$. Define the minimal distance between clusters as $5\lambda:=\min_{i\not\sim j}\|x_i^*-x_j^*\|$, that satisfies $\lambda>0$. By convergence of $x_i(t_k)$ to $x_i^*$, there exists $\bar k$ sufficiently large to have $\|x_i(t_{\bar k})-x_i^*\|<\lambda$ for all $i\in V$. As a consequence, the following {\bf cluster separation condition} holds: 
    \begin{center}
        
    If $i\sim j$, then it holds  $\|x_i(t_{\bar k})-x_j(t_{\bar k})\|<2\lambda$.
    If $i\not \sim j$, then it holds $\|x_i(t_{\bar k})-x_j(t_{\bar k})\|>3\lambda$.
        \end{center}

    It is now easy to prove that, for all $t\geq t_{\bar k}$, the same cluster separation condition holds too, since interactions between agents of different clusters do not occur: the proof is similar to Proposition~\ref{p-contractive} and is omitted. As a consequence, each of the cluster acts as an independent system starting from $t_{\bar k}$. In particular, we can apply Proposition~\ref{p-contractive} to each cluster independently: similarly to the previous case, for each $\eps>0$ there exists $k\geq {\bar k}$ such that for each class of equivalence $[i]_\sim$ the support of $\{x_j(t)\mbox{~~s.t.~~} j\in [i]_\sim\}$ is contained in $B(x_i^*,\eps)$ for all $t\geq t_k$. By letting $\eps \to 0$, we have $x_j(t)\to x_i^*$.
\end{itemize}
In both cases, we have proved that the $\omega$-limit of $x(t)$ is reduced to $x^*$. Thus, by choosing $x^\infty=x^*$ we have that the statement is proved.

\end{proof}

\section{Future directions}
In this paper we explored various concepts of solutions for discontinuous differential equations, motivated by social dynamics models.
In particular, we focused on the so-called bounded confidence models, where each agent is interacting either with neighbors within a fixed distance (metric case) or with the $\kappa$ closest ones
(topological case).
As per the concepts of solutions we focused on
Caratheodory and Krasovsky, after proving that the set of Filippov solutions coincides with that of Krasovsky solutions for the considered models.

Existence of solutions and uniqueness for almost every initial datum are proved in Krasovsky and
Caratheodory sense for both models.
We also explored properties of solutions
such as preservation of the average,
contractivity of support and convergence to cluster points. 
Contractivity of the support always holds true,
the other properties hold for the metric case
(and both concepts of solutions),
while they fail for the topological case
with the exception of convergence to cluster points that holds for Caratheodory solutions if $\kappa=1$.

Future investigations may include:
\begin{itemize}
    \item Exploring existence, uniqueness and properties of trajectories for other concepts of solutions, such as limit of Euler or CLSS, stratified solutions and others \cite{piccoli_rossi_2020};
    \item Studying the implications of our results to approximated solutions produced by numerical schemes;
    \item Considering the topological-metric case, where each agent interacts with the closest $\kappa$ neighbors if they are within a fixed distance;
    \item Extending the scope of our analysis to include dynamical models with other types of discontinuities, as those generated by quantization~\cite{ceragioli2018discontinuities} or hybrid setting~\cite{frasca2019hybrid}.
\end{itemize}

\bibliographystyle{abbrv}

\end{document}